\newtheorem{thm}{Theorem}[section]
\newtheorem{corollary}[thm]{Corollary}
\newtheorem{lemma}[thm]{Lemma}
\newtheorem{proposition}[thm]{Proposition}
\theoremstyle{definition}
\newtheorem{definition}[thm]{Definition}
\newtheorem{example}[thm]{Example}
\theoremstyle{remark}
\newtheorem{rem}[thm]{Remark}
\numberwithin{equation}{section}
\newcommand{\set}[1]{\left\{#1\right\}}
\newcommand{\R}{\mathbb R}
\newcommand{\ari}{\mathrm{ar}} 
\newcommand{\eqdef}{\mbox{\,\raisebox{0.2ex}{\scriptsize\ensuremath{\mathrm:}}\ensuremath{=}\,}} 
\newcommand{\inc}{\subseteq}
 \newcommand{\incs}{\subsetneq}
\newcommand{\union}{\cup}
\newcommand{\Union}{\bigcup}	
\newcommand{\setc}[2]{\set{#1 \mid #2}}
\newcommand{\hyper}[1]{{\mathbb #1}}	
\newcommand{\restrH}[2]{\hyper{#1}\backslash #2}
\definecolor{darkblue}{rgb}{0,0,0.7} 
\newcommand{\darkblue}{\color{darkblue}} 
\newcommand{\defn}[1]{{\darkblue \emph{#1}}}
\newcommand{\occ}[2]{#1/#2}
\newcommand{\recrestr}[2]{#1_{\cap #2}}
\newcommand{\xyz}[3]{#1\stackrel{#2}{\rightsquigarrow}#3}
\newcommand{\pre}[2]{<_{#1,#2}}
\newcommand{\calH}{\mathcal{H}}
\newcommand{\calT}{\mathcal{T}}
\DeclareMathOperator{\Sat}{Sat}
\DeclareMathOperator{\supp}{supp}
\DeclareMathOperator{\Ter}{Ter}
\DeclareMathOperator{\outsort}{out}
\DeclareMathOperator{\insort}{in}
\DeclareMathOperator{\var}{var}
\newlength\myheight
\newlength\mydepth
\settototalheight\myheight{Xygp}
\tikzset{math3d/.style=
    {x= {(-0.353cm,-0.353cm)}, z={(0cm,1cm)},y={(1cm,0cm)}}}
\tikzset{JLL3d/.style=
    {x= {(0.4cm,-0.2cm)}, z={(0cm,1cm)},y={(-1cm,0cm)}}}
\tikzset{
  optree/.style={scale=.5,thick,grow'=up,level distance=10mm,inner sep=1pt},
  comp/.style={draw=none,circle,fill,line width=0,inner sep=0pt},
  dot/.style={draw,circle,fill,inner sep=0pt,minimum width=3pt},
  circ/.style={draw,circle,inner sep=1pt,minimum width=4mm},
  emptycirc/.style={draw,circle,inner sep=1pt,minimum width=2mm},
  root/.style={level distance=10mm,inner sep=1pt},
  leaf/.style={draw=none,circle,fill,line width=0,inner sep=0pt},
  nodot/.style={draw,circle,inner sep=1pt},
}
\pgfplotsset{compat=1.12}
\title{Term rewriting on nestohedra}
\author{Pierre-Louis Curien}
\address{IRIF, Universit\'e Paris Diderot and $Picube$ team, Inria, France.}
\email{curien@irif.fr}
\author{Guillaume Laplante-Anfossi}
\address{School of Mathematics and Statistics, The University of Melbourne, Victoria, Australia.}
\email{guillaume.laplanteanfossi@unimelb.edu.au}
\date{\today}
\subjclass[2020]{Primary 68Q42, Secondary 18N20, 52B11} 
\keywords{Term rewriting, nestohedra, hypergraph polytopes, categorified operads, categorical coherence, MacLane coherence theorem.}
\thanks{The second author was supported by the Australian Research Council Future Fellowship FT210100256 and the Andrew Sisson Fund.}
\begin{document}

\begin{abstract}
We define term rewriting systems on the vertices and faces of nestohedra, and show that the former are  
confluent and terminating. 
While the associated posets on vertices generalize Barnard--McConville's flip order for graph-associahedra, the preorders on faces generalize the facial weak order for permutahedra and the generalized Tamari order for associahedra. 
Moreover, we define and study contextual families of nestohedra, whose local confluence diagrams satisfy a certain uniformity condition. 
Among them are associahedra and operahedra, whose associated proofs of confluence for their rewriting systems reproduce proofs of categorical coherence theorems for monoidal categories and categorified operads.
\end{abstract}

\maketitle

\setcounter{tocdepth}{1}


\section*{Introduction} 
\label{s:introduction}

\subsection*{From rewriting to coherence}
In his seminal notes~\cite{Huet-notes-cat} for a graduate course at Université Paris 7, Gérard Huet explained Mac Lane's proof of the coherence theorem for monoidal categories through the lenses of equational reasoning and  term rewriting theory. 
Huet remarked that instantiations in context of Mac Lane's pentagon can be read as local confluence diagrams. Iterated tensor bifunctors  are represented as terms over the signature on a single operation $\otimes$ of arity 2, the associator gives rise to the single rewriting rule $(X\otimes Y)\otimes Z\to X\otimes(Y\otimes Z)$ (more details are provided in~\cref{ss:coherence}), and
\begin{enumerate}
\item proving  the coherence statement in the case of canonical natural transformations $\lambda:F\rightarrow G$, where $\lambda$ is defined using the associator only (and not its inverse) and where $G$ is a normal form for the above rewriting system, amounts to annotating the proof of Newman's lemma with  explicit names for the rewriting steps;
\item moreover, the proof of the general case of the coherence theorem resembles the proof of the Church-Rosser property, which states that if two terms $P,Q$ can be proved equal in the equational theory obtained by forgetting the orientation of the rewriting rules, then there is some $N$ such that $P\rightarrow\cdots\rightarrow N$ and
$Q\rightarrow\cdots\rightarrow N$.
\end{enumerate}
In addition, in order to check local confluence, it is enough to check local confluence of {\em critical pairs}, which are ``minimal'' situations in which $M\rightarrow P$ and $M\rightarrow Q$ in such a way that the respective subterms of $M$ to which the two reductions are applied overlap (see ~\cref{recollection-section}).

Huet notes that Mac Lane's pentagon expresses the local confluence of the unique critical pair of  the rewriting system given by the associator. The reader unfamiliar with the terminology of rewriting systems will find a brief hopefully self-contained introduction to rewriting in~\cref{recollection-section}.
Later on, we shall refer to the observations above (limited to the case of unit-free monoidal structure) as the original \defn{Huet correspondence}, see~\cref{Huet-correspondence} for a precise description.

\subsection*{Coherence and polytopes}

In a previous paper~\cite{CLA1}, we discussed combinatorial topological proofs of coherence theorems.
In particular, we gave an explicit  topological proof of Mac Lane's coherence theorem by using the fact that all diagrams involved live on the $2$-skeleton of a family of polytopes, the associahedra. 
Here, 
\begin{itemize}
\item[(0)] 0-cells correspond to functors, 
\item[(1)] paths in the 1-skeleton correspond to natural transformations,  
\item[(2)]pentagons as well as naturality and bifunctoriality squares correspond to 2-faces,  
\end{itemize}
and the coherence statement amounts to asking whether any two parallel cellular paths can be related by repeatedly replacing a portion of a path fitting on the boundary of a $2$-face by the complementary path on that same boundary.
In fact, our topological/combinatorial results can be applied to give ``one step proofs'' (quoting Kapranov~\cite{kapranov1993}) of a number of other categorical coherence theorems. 

\subsection*{Rewriting on nestohedra}

It is therefore natural to ask if we can factor Huet correspondence through the topological correspondence sketched above,  and extend it by associating  term rewriting systems to other kinds of polytopes, yielding  coherence results for different families of interest in a unified way.
In this paper, we give a positive answer to this question for the family of hypergraph polytopes, also known as nestohedra, associated with connected hypergraphs. 
We construct confluent  and terminating  term rewriting systems (\cref{thm:confluent}) on the vertices of hypergraph polytopes in such a way that edges are naturally oriented and feature rewriting steps.
We characterize the local confluence diagrams of their critical pairs as certain types of $2$-faces (\cref{critical-pairs-nestohedra}).
The rewriting steps on the vertices generalize Barnard--McConville's \emph{flip order} on the vertices of graph-associahedra~\cite{Barnard-McConville}, and are induced by an orientation vector (\cref{Tamari-orientation-vector}).
Meanwhile, we also define a rewriting system on all faces, which coincides with the \defn{facial weak order} in the case of permutahedra \cite{KrobLatapyNovelliPhanSchwer,PalaciosRonco,DermenjianHohlwegPilaud} and the \defn{generalized Tamari order} in the case of associahedra \cite{PalaciosRonco}, see \cref{rem:facialweak}.

\subsection*{Contextual hypergraphs}

We shall then specialize the discussion to \emph{contextual families} of hypergraphs (\cref{def:contextual-family}). 
Among these families, one finds the associahedra and the operahedra (\cref{thm:examples}), whose term rewriting systems provide, via the Huet correspondence, coherence theorems for monoidal categories and categorified operads, see \cref{rem:coherence} and \cref{ss:coherence}.  
The idea behind the condition satisfied by contextual nestohedra is to enforce the shape of local confluence diagrams for critical pairs to be ``uniform'', in some sense relying on the combinatorics of hypergraph polytopes, see~\cref{contextual-discussion}.
Other contextual families of nestohedra include permutahedra and contextual graph-associahedra, whose term rewriting systems should provide coherence theorems for categorified permutads~\cite{LodayRonco11,Markl19} and reconnectads \cite{DotsenkoKeilthyLyskov}.
Known and unknown structures and coherence theorems are summarized in \cref{table:contextual-hyper}.
Another interesting question would be to characterize combinatorially contextual graph-associahedra and nestohedra, as defined in \cref{ss:examples}. 

\subsection*{Plan of the paper}
In~ \cref{s:hypergraph} we recollect some background on hypergraph polytopes, and we examine their 2-faces in~ \cref{s:anatomy}.
In~\cref{s:rewriting} we introduce some basics on term rewriting, define our hypergraphic rewriting systems and prove our main results establishing a geometric form of Huet's correspondence for nestohedra.
Contextual hypergraphs are introduced and illustrated in  \cref{s:contextual}.

\subsection*{Notations}

We denote by ${\cal R}^*$ the reflexive and transitive closure of a relation ${\cal R}$. 
We use~$|-|$ to denote the cardinality of a set.
We shall manipulate trees of various sorts. They will always be rooted.  We define the  full subtree relation as follows: $\mathfrak{S}$
is a subtree of $\mathfrak{T}$ if $\mathfrak{S}$ is obtained by picking a node of $\mathfrak{T}$ and all its descendants. The subtree relation is traditionally defined by taking connected subsets. Clearly full subtrees are subtrees, but not conversely. We shall only need full subtrees, and as a matter of abbreviation we shall call them just subtrees, following the computer science tradition.

\subsection*{Acknowledgements}
We would like to thank Vincent Pilaud for useful discussions.


\section{Hypergraph polytopes} 
\label{s:hypergraph}

In this section, we recall the definition of hypergraph polytopes. 
We refer to \cite{DP-HP,COI} for more details. 


\subsection{Hypergraphs}
A \defn{hypergraph} is given by a finite set $H$ of \defn{vertices} and a subset of \defn{hyperedges} $\hyper{H}\inc {\cal P}(H)\setminus\{\emptyset\}$ such that $\Union \hyper{H}=H$. 
We say that $\hyper{H}$ is \defn{ordered} if $H$ is equipped with a total order.
We always assume that $\hyper{H}$ is \defn{atomic}, that is $\set{x}\in \hyper{H}$, for all $x\in H$. 
A hyperedge of cardinality 2 is called an \defn{edge}.  
For $X\inc H$, the \defn{plain restriction} of $\hyper{H}$ to $X$ is the set 
$\hyper{H}_X \eqdef   \setc{Z\in \hyper{H}}{\; Z\inc X}$.

We say that $\hyper{H}$ is \defn{connected} if there is no non-trivial partition $H=X_1\union X_2$ such that $\hyper{H}=\hyper{H}_{X_1}\union \hyper{H}_{X_2}$. 
For each hypergraph, there exists a partition $H=X_1\union\ldots\union X_m$ such that each $\hyper{H}_{X_i}$ is connected and $\hyper{H}=\Union(\hyper{H}_{X_i})$.  
The $\hyper{H}_{X_i}$'s are called the \defn{connected components} of $\hyper{H}$.
We say that a non-empty subset $X\inc H$ of vertices is \defn{connected} (resp. a \defn{connected component}) whenever $\hyper{H}_X$ is connected (resp. a connected component of $\hyper{H}$).  Thus our uses of ``connected'' in the sequel will always carry the non-emptyness information.
We denote by $\restrH{H}{X}\eqdef  \hyper{H}_{H\setminus X}$ the plain restriction of $\hyper{H}$ to $H \setminus X$.
The \defn{saturation} of $\hyper{H}$ is the hypergraph
$\Sat(\hyper{H})\eqdef  \setc{X}{\emptyset\incs X\inc H\;\mbox{and}\;\hyper{H}_X\;\mbox{is connected}}$.
A hypergraph is called \defn{saturated} when $\hyper{H}=\Sat(\hyper{H})$.  
The \defn{reconnected restriction} of $\hyper{H}$ to $X$ is the set $$\recrestr{\hyper{H}}{X}\eqdef  \setc{Z\cap X}{Z\in \Sat(\hyper{H}), Z\cap X\neq\emptyset}.$$

\begin{rem}
    Atomic and saturated hypergraphs are called \defn{building sets} in the  literature on nestohedra, see for example \cite{P09,FS05}.
\end{rem}

For $X\inc H$, we will express the fact that $\setc{H_i}{i\in I}$ is the set of connected components of $\restrH{H}{X}$ by the notation $\hyper{H},X  \leadsto  \setc{H_i}{i\in I}$. 
If $I=\{1,\ldots,n\}$, we shall omit the brackets and write $\hyper{H},X  \leadsto  H_1,\ldots,H_n$.
If $\hyper{H}$ is ordered, we morever order the connected components by \defn{increasing order} of their maximal vertices.

If  $x,y,z\in H$ and $\hyper{H},\set{x}\leadsto \setc{H_i}{i\in I}$, we shall write
$$\begin{array}{ll}
\xyz{x}{\hyper{H}}{\set{y,z}} & \mathrm{if}\; y,z\in H_i\; \mbox{for some}\; i \in I\\
\xyz{x}{\hyper{H}}{\set{y},\set{z}} & \mbox{otherwise}.
\end{array}$$
In the second case, we will say that $x$ \defn{disconnects} $y$ and $z$ in $\hyper{H}$. 
The reconnected restriction allows one to characterize the preceding two situations as follows.

\begin{lemma} 
\label{xyz-reconnected} 
We have
$$\begin{array}{lll}
\xyz{x}{\hyper{H}}{\set{y,z}} & \mathrm{iff} & \recrestr{\hyper{H}}{\set{x,y,z}},\set{x}\leadsto\set{y,z} \\
\xyz{x}{\hyper{H}}{\set{y},\set{z}} & \mathrm{iff} & \recrestr{\hyper{H}}{\set{x,y,z}},\set{x}\leadsto \set{y},\set{z}.
\end{array}$$
\end{lemma}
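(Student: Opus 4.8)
The plan is to unwind the definitions on both sides and check that the condition "$x$ (dis)connects $y$ and $z$" is insensitive to passing to the reconnected restriction on $\set{x,y,z}$. The key observation is that the relation $\xyz{x}{\hyper{H}}{-}$ depends only on the connected components of $\restrH{H}{\set{x}}$ that contain $y$ and $z$, and more precisely only on whether $y$ and $z$ lie in a common such component. So I would first reduce, in both the hypothesis and the conclusion, to a statement about connectedness of two-element sets inside one-vertex-deleted restrictions.

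First I would set $X = \set{x,y,z}$ and write $\hyper{K} = \recrestr{\hyper{H}}{X}$ for brevity. The left-hand sides of the two equivalences are, by definition, the two mutually exclusive cases of the dichotomy: either $y,z$ lie in the same connected component of $\restrH{H}{\set{x}}$, or they lie in different ones. Likewise the two right-hand sides are the two cases of the analogous dichotomy for $\hyper{K}$ in place of $\hyper{H}$. Since in each case the two alternatives are exhaustive and mutually exclusive, it suffices to prove just the first equivalence — the second then follows by negation. So the real content is:
\begin{equation*}
\xyz{x}{\hyper{H}}{\set{y,z}} \quad\Longleftrightarrow\quad \xyz{x}{\hyper{K}}{\set{y,z}}, \qquad \hyper{K} = \recrestr{\hyper{H}}{\set{x,y,z}}.
\end{equation*}
Now $\xyz{x}{\hyper{H}}{\set{y,z}}$ means $y,z$ are in the same connected component of $\restrH{H}{\set{x}}$, which (since $y,z$ are vertices of that hypergraph and any connected component is in particular a connected set, while conversely any connected set lies in a single component) is equivalent to saying: there exists a connected subset $W \inc H\setminus\set{x}$ with $y,z \in W$; and by taking $W$ minimal one may even assume $W$ connects $y$ to $z$ via a path of hyperedges. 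The same rephrasing applies verbatim to $\hyper{K}$, with $W \inc \set{x,y,z}\setminus\set{x} = \set{y,z}$, so on the right-hand side the statement collapses to: $\set{y,z}$ is a connected subset of $\hyper{K}$, i.e. $\set{y,z} \in \Sat(\hyper{K})$ (equivalently $\set{y,z}$ is a hyperedge of $\hyper{K}$, since a two-element set is connected iff it is itself a hyperedge).

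The crux — and the step I expect to be the main obstacle — is to show that $\set{y,z}$ is connected in $\recrestr{\hyper{H}}{\set{x,y,z}}$ if and only if $y,z$ lie in a common connected component of $\restrH{H}{\set{x}}$. For the forward direction, unfolding the definition of reconnected restriction, $\set{y,z} \in \hyper{K}$ means there is $Z \in \Sat(\hyper{H})$ with $Z \cap \set{x,y,z} = \set{y,z}$; such a $Z$ is a connected subset of $H$ containing $y,z$ but not $x$, hence $y,z$ are in the same component of $\restrH{H}{\set{x}}$. For the converse, given a connected $W \inc H\setminus\set{x}$ with $y,z\in W$, I need to produce a hyperedge of $\hyper{H}$, or more generally an element $Z$ of $\Sat(\hyper{H})$, meeting $\set{x,y,z}$ exactly in $\set{y,z}$. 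Taking $Z = W$ works as soon as $W \in \Sat(\hyper{H})$, which holds precisely because $W$ is connected and nonempty — this is the definition of saturation — and $W \cap \set{x,y,z} = \set{y,z}$ since $x \notin W$ and $y,z \in W \cap \set{x,y,z} \inc \set{y,z}$. So the correspondence is in fact quite direct once the definitions are laid side by side; the only thing to be careful about is the bookkeeping of the intersection with $\set{x,y,z}$ and the fact that a minimal connecting set can always be taken, which guarantees the $\hyper{K}$-side membership lands on the two-element set rather than something larger. I would conclude by remarking that the second equivalence is the contrapositive of the first, since both the $\xyz{x}{\hyper{H}}{-}$ and $\xyz{x}{\hyper{K}}{-}$ dichotomies are exhaustive.
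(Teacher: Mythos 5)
Your proof is correct and follows essentially the same route as the paper: both unwind the definition of reconnected restriction, using that the component of $\restrH{H}{\set{x}}$ containing $y,z$ lies in $\Sat(\hyper{H})$ and meets $\set{x,y,z}$ exactly in $\set{y,z}$, and conversely that a saturated witness avoiding $x$ forces $y,z$ into one component. The only cosmetic difference is organizational (you prove one equivalence and get the other from the exhaustive, mutually exclusive dichotomies, while the paper proves both forward implications and notes the converses are immediate), which amounts to the same argument.
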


\begin{proof} 
    Let $\hyper{H},\set{x}\leadsto H_1,\ldots,H_n$. 
    Suppose that $\xyz{x}{\hyper{H}}{\set{y,z}}$. 
    Then there exists $i$ such that~$\set{y,z}\inc H_i$, and hence $\set{y,z}\inc H_i\cap\set{x,y,z}$, and in fact $\set{y,z} = H_i\cap\set{x,y,z}$ since~$x\not\in H_i$. 
    Thus $\recrestr{\hyper{H}}{\set{x,y,z}},\set{x}\leadsto\set{y,z}$ holds by definition of reconnected restriction.
    If to the contrary we have $\xyz{x}{\hyper{H}}{\set{y},\set{z}}$, then there exists $i\neq j$ such that $y\in H_i$ and~$z\in H_j$. 
    We then derive likewise that $H_i\cap\set{x,y,z}=\set{y}$ and $H_j\cap\set{x,y,z}=\set{z}$ from which $ \recrestr{\hyper{H}}{\set{x,y,z}},\set{x}\leadsto \set{y},\set{z}$ follows.
    The reverse implications are immediate.
\end{proof}


\subsection{Constructs}
A {connected} hypergraph $\hyper{H}$ gives rise to a set of \defn{constructs}, which are defined inductively as follows.
 
\begin{definition} 
\label{inductive-construct}
Let $\hyper{H}$ be a connected hypergraph and $Y$ be a non-empty subset of $H$.
\begin{enumerate}
\item  If $Y = H$, then the one-node tree $H$ decorated with $H$ is a construct of $\hyper{H}$.
\item If $\hyper{H},Y  \leadsto H_1,\ldots,H_n$, and if $T_1,\ldots,T_n$ are constructs of $\hyper{H}_1,\ldots,\hyper{H}_n$, respectively, then the
non-planar tree $Y(T_1,\ldots,T_n)$ whose root is decorated by $Y$, with $n$ outgoing edges on which the respective $T_i\,$'s are grafted, is a construct.  
\end{enumerate}
In the notation $Y(T_1,\ldots,T_n)$, no order is intended on  $T_1,\ldots,T_n$.
However, when $\hyper{H}$ is ordered, the trees $T_1,\ldots,T_n$ are listed according to the order given by $\hyper{H},Y  \leadsto H_1,\ldots,H_n$, making constructs planar.
When $Y={z}$ is a singleton, we freely write $z$ in place of $\set{z}$.
A \defn{construction} is a construct all of whose nodes are  decorated with singletons. 
\end{definition}

Since all decorations in a construct are disjoint, we freely identify nodes with subsets of~$H$. 
We use the notation $\occ{T}{X}$ to denote the  subtree of $T$ rooted at $X$, defined only if $X$ is indeed a decoration of a node of $T$. 
If $S$ is a subtree of $T$, we denote by $\supp(S)$ the union of the decorations of the nodes of $S$.

\begin{rem} \label{subconstruct-restriction}
The intention behind this presentation is algorithmic: a construct is built by picking and removing a non-empty subset $Y$ of $H$, then branching to the connected components of $\restrH{H}{Y}$ and continuing inductively in all the branches.
It follows readily from the definition that $\occ{T}{X}$ is a construct of $\hyper{H}_{\supp(\occ{T}{X})}$.
\end{rem}

\begin{rem}
    The notion of construct is equivalent to the notion of nested set \cite{P09}, and to the notion of tubing in the case where $\hyper{H}$ is a graph~\cite{CD-CCGA}.  
    We refer to \cite[Sec.~3.1]{COI} for details.
\end{rem}


If $X,Y$ are two nodes of a construct $S$ of $\hyper{H}$, $X$ being the father of $Y$, we can define a new construct $T$ by contracting the edge between $X$ and $Y$, and labeling the resulting vertex of~$T$ by the union of the labels of $X$ and $Y$. 
Formally, if $X$ is a node of $S$ such that $\occ{S}{X}=X(Y(S_1,\ldots,S_m),S_{m+1},\ldots S_n)$, then  $T$ is obtained by replacing in $S$ the  subtree rooted at $X$ with $(X\cup Y)(S_1,\ldots,S_n)$.  
We say that $T$ \defn{covers} $S$ and use the notation $S\prec T$.

\begin{definition} 
  \label{subface-relation}
    We denote $({\cal A}(\hyper{H}),\prec^*)$ the poset of constructs of a connected hypergraph~$\hyper{H}$ obtained as the reflexive and transitive closure of the  covering relation $\prec$.    
\end{definition}
    
We shall need the following lemma.
\begin{lemma} 
  \label{partial-construct}
Let $\hyper{H}$ be a connected hypergraph. 
If $\hyper{H},X\leadsto H_1,\ldots,H_n$ and if constructs $T_i$ of~$\hyper{H}_i$ are given for all $i\in I\subseteq \set{1,\ldots,n}$, then $X(\setc{T_i}{i\in I})$ is a construct of $\recrestr{\hyper{H}}{(H\setminus\cup\setc{H_j}{j\not\in I})}$.
\end{lemma}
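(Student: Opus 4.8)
The plan is to proceed by induction on the size of $H$, using the inductive definition of constructs (\cref{inductive-construct}) together with the characterization of reconnected restrictions. Write $K \eqdef H \setminus \bigcup\setc{H_j}{j\notin I}$, so that the claim is that $X(\setc{T_i}{i\in I})$ is a construct of $\recrestr{\hyper{H}}{K}$. First I would record two structural facts about $\recrestr{\hyper{H}}{K}$. Since $X \leadsto H_1,\ldots,H_n$ means the $H_j$ are exactly the connected components of $\restrH{H}{X}$, the set $X$ is itself connected in $\recrestr{\hyper{H}}{K}$ (indeed $X \cup \{\text{one representative from each }H_i, i\in I\}$ stays connected after reconnection, and $X$ is precisely the complement in $K$ of the remaining vertices); more precisely I want to show that $\recrestr{\hyper{H}}{K}, X \leadsto \recrestr{(\hyper{H}_i)}{H_i}$ for $i \in I$ — that is, removing $X$ from the reconnected restriction leaves exactly the blocks $H_i$ with $i\in I$, each carrying the reconnected hypergraph structure $\recrestr{(\hyper{H}_i)}{H_i} = \hyper{H}_i$ (the last equality because $H_i$ is already a full connected component, so reconnection does nothing). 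The key point here is that reconnection and plain restriction to a connected component commute appropriately, which follows by unwinding the definitions of $\Sat$ and $\recrestr{(-)}{-}$.

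Given those two facts, the proof is essentially immediate from clause (2) of \cref{inductive-construct}: we have $\recrestr{\hyper{H}}{K}, X \leadsto \setc{H_i}{i\in I}$, and for each $i \in I$ the tree $T_i$ is a construct of $\hyper{H}_i = \recrestr{\hyper{H}_i}{H_i}$, which is the $i$-th connected component appearing in this decomposition; hence $X(\setc{T_i}{i\in I})$ is by definition a construct of $\recrestr{\hyper{H}}{K}$. No genuine induction on the $T_i$ is needed once the decomposition statement is in place — the $T_i$ are used as black boxes — so the whole lemma reduces to the combinatorial identity about how $\leadsto$ interacts with reconnected restriction.

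The main obstacle is therefore the bookkeeping around reconnected restrictions: one must check carefully that $\recrestr{\hyper{H}}{K}, X \leadsto \setc{H_i}{i \in I}$, i.e. that the connected components of $(\recrestr{\hyper{H}}{K})\setminus X$ are exactly the $H_i$ with $i \in I$, and that the hypergraph structure each inherits is the original $\hyper{H}_i$. For the component identification, a vertex $v \in K \setminus X$ lies in some $H_i$ with $i \in I$; I would argue that reconnection only adds hyperedges among vertices that were "connected through $X$", so it cannot merge distinct $H_i$'s after $X$ is removed again, and within a single $H_i$ the reconnection adds hyperedges of the form $Z \cap K$ for $Z \in \Sat(\hyper{H})$ with $Z \cap K \subseteq H_i \cup X$; restricting away $X$ recovers exactly $\Sat(\hyper{H}_i)$-traces, which keep $H_i$ connected. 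A clean way to package this is to invoke (a slight generalization of) \cref{xyz-reconnected} or the discussion of reconnected restrictions in \cite{COI}; I expect the cited references handle precisely this commutation, so the argument can be kept short by appealing to them, with the verification above spelled out only to the extent needed for self-containedness.
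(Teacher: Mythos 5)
Your proposal is correct and follows essentially the same route as the paper: writing $K\eqdef H\setminus\bigcup\setc{H_j}{j\notin I}$, the paper's proof consists of exactly your two observations, namely that $\recrestr{\hyper{H}}{K},X\leadsto\setc{H_i}{i\in I}$ and that $(\recrestr{\hyper{H}}{K})_{H_i}=\hyper{H}_i$ (strictly speaking $\Sat(\hyper{H}_i)$, which has the same constructs), after which clause (2) of \cref{inductive-construct} concludes with no induction on the $T_i$. The only blemish is your parenthetical claim that $X$ itself is connected in $\recrestr{\hyper{H}}{K}$, which is false in general (take $\hyper{H}=\set{\set{a},\set{b},\set{c},\set{a,b},\set{b,c}}$, $X=\set{a,c}$, $I=\set{1}$), but since clause (2) only requires the chosen node to be non-empty, this remark is never used and does not affect the argument.
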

\begin{proof} This is an immediate consequence of the following two observations: we have
$$\recrestr{\hyper{H}}{(H\setminus\cup\setc{H_j}{j\not\in I})},X\leadsto \setc{H_i}{i\in I}\;\; \mbox{and}\;\; (\recrestr{\hyper{H}}{(H\setminus\cup\setc{H_j}{j\not\in I})})_{H_i}=\hyper{H}_{H_i}=\hyper{H}_i.$$
\end{proof}


\subsection{Hypergraph polytopes}
\label{ss:hypergraph-polytopes}

We are now ready to define hypergraph polytopes, a.k.a nestohedra.

\begin{definition}
    A \defn{hypergraph polytope} is a polytope whose face lattice is isomorphic to the poset of constructs of some connected hypergraph $\hyper{H}$.
\end{definition}

Do\v sen and Petri\'c gave polytopal realizations of hypergraph polytopes in ~\cite{DP-HP}.
The idea is that the connected subsets of $\hyper{H}$ specify the faces of a fixed $(|H|-1)$-dimensional simplex that are to be truncated.

Let us introduce a few families of hypergraph polytopes that will be studied in \cref{ss:examples}.
When $\hyper{H}$ is a graph, hypergraph polytopes are known as \emph{graph-associahedra} \cite{CD-CCGA}.

\subsubsection{Simplices}
The $n$-dimensional \defn{simplex} is the realization of the hypergraph 
$$\mathbb{S}_n\eqdef  \{\{1\},\{2\},\ldots,\{n+1\},\{1,\ldots,n+1\}\}.$$

\subsubsection{Cubes}
The $n$-dimensional \defn{cube} is the realization of the hypergraph
$$\mathbb{C}_n\eqdef  
\{\{1\},\{2\},\ldots,\{n+1\}\}\cup\{\{j \ | \ 1 \leq j \leq i \} \ | \ 1 \leq i \leq n+1\}.$$
Note that these two families of hypergraph polytopes are defined by genuine hypergraphs, contrary to the following families of graph-associahedra.

\subsubsection{Associahedra}
The $n$-dimensional \defn{associahedron} is the realization of the graph 
$$\mathbb{K}_n\eqdef  \{\{1\},\{2\},\ldots,\{n+1\},\{1,2\},\ldots,\{n,n+1\}\}.$$
In other words, it is the graph-associahedron for the linear graph with $n+1$ vertices.
By adding the edge $\{n+1,1\}$, we get the $n$-dimensional \defn{cyclohedron}, which plays a role in our story (see~\cref{non-contextual-2}).  

\subsubsection{Permutahedra}
The $n$-dimensional \defn{permutahedron} is the realization of the graph 
$$\mathbb{P}_n\eqdef  \{\{1\},\{2\},\ldots,\{n+1\}\} \cup \{\{i,j\} \ | \ 1 \leq i \neq j \leq n+1 \}.$$
In other words, it is the graph-associahedron for the complete graph on $n+1$ vertices.

\smallskip
Constructs of associahedra and permutahedra are easily seen to be in bijection with planar rooted trees and surjections, respectively.
These standard labellings can be found for instance in \cite{PalaciosRonco}. 
In brief:
\begin{itemize} 
\item given a planar rooted tree with $n+2$ leaves $l_0,\ldots,l_{n+1}$, the corresponding construct of $\mathbb{K}_n$ is obtained by placing $i$ in the first common node between the two paths from~$l_{i-1}$ and~$l_i$ to the root, for each $1 \leq i \leq n+1$, and then by removing the leaves;
\item given a construct $X_r(X_{r-1}(\ldots(X_0)\ldots))$ of $\mathbb{P}_n$, one defines a surjection $s:\set{0,\ldots,n}\rightarrow \set{0,\ldots,r}$ by the formula $s(i)=j$ whenever $i+1\in X_j$.
\end{itemize}
More details can be found in \cite[Section 2.6]{COI}.

\smallskip
Note that the definition of $\hyper{S}_n$ and $\hyper{P}_n$ does not depend on any order on the vertices, while the definition of $\hyper{C}_n$ and $\hyper{K}_n$ involves the total order on~$\set{1\ldots,n+1}$. 

\begin{rem}
  In the above descriptions, we can replace $\set{1,\ldots,n+1}$ by any finite set $X=\set{x_1,\ldots,x_{n+1}}$  (resp. any finite linear order $X=x_1<\cdots<x_{n+1}$)  and define $\hyper{S}^X$ and $\hyper{P}^X$ (resp. $\hyper{C}^X$, $\hyper{K}^X$) accordingly.
\end{rem}

\subsubsection{Operahedra}
To every  planar tree $\calT$ with $n+2$ vertices, one can associate its $n$-dimensional \defn{operahedron}, whose faces are in bijection with the nestings of $\calT$ \cite{laplante-anfossiDiagonalOperahedra2022a,CLA1}, which in turn
are in bijection with the tubings of the \defn{line graph} $\mathbb{L}(\calT)$ of $\calT$.
The vertices of $\mathbb{L}(\calT)$ are the edges of ${\cal T}\!$, and two vertices are connected whenever as edges of ${\cal T}$ they share a common vertex, see \cref{fig:line-graph}.
In other words, an $n$-dimensional operahedron is the graph-associahedron of a clawfree block graph with $n+1$ vertices.

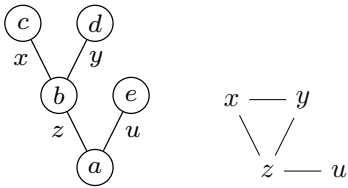
\begin{figure}[h!]
  \begin{center}
    \begin{tabular}{ccc}
    \resizebox{2cm}{!}{\begin{tikzpicture}[scale=0.8]
        \node (E)[circle,draw=black,minimum size=4mm,inner sep=0.1mm] at (0,0) {\scriptsize $a$};
        \node (F) [circle,draw=black,minimum size=4mm,inner sep=0.1mm] at (-0.5,1) { \scriptsize $b$};
        \node (A) [circle,draw=black,minimum size=4mm,inner sep=0.1mm] at (0.5,1) {\scriptsize $e$};
        \node (Asubt) [circle,draw=black,minimum size=4mm,inner sep=0.1mm] at (-1,2) {\scriptsize  $c$};
        \node (P) [circle,draw=black,minimum size=4mm,inner sep=0.1mm] at (0,2) {\scriptsize $d$};
        \draw[-] (E)--(F) node  [midway,left] {\scriptsize $z$};
        \draw[-] (E)--(A) node  [midway,right] {\scriptsize $u$};
     \draw[-] (F)--(Asubt) node [midway,left] {\scriptsize $x$};
     \draw[-] (F)--(P) node [midway,right] {\scriptsize $y$};
       \end{tikzpicture}}
    
    &&
    \resizebox{2cm}{!}{
    \begin{tikzpicture}
        \node (Z)[] at (-0.5,0) {$z$};
        \node (U)[]  at (0.5,0) {$u$};
        \node (X)[]  at (-1,1) {$x$};
        \node (Y)[]  at (0,1) {$y$};
        \draw[-] (Z)--(U) node  {};
     \draw[-] (Z)--(X) node  {};
     \draw[-] (Z)--(Y) node {};
     \draw[-] (X)--(Y) node {};
       \end{tikzpicture}}
    \end{tabular}
    \end{center}
    \caption{A planar tree with $5$ vertices (left) and its line graph (right).}
    \label{fig:line-graph}
\end{figure}

Many more examples are to be found in~\cite{DP-HP,COI,CDOO}, as well as in the abundant literature on nestohedra.


\section{Anatomy of the 2-skeleton} 
\label{s:anatomy}

In this section we describe all the possible $2$-faces of a hypergraph polytope.
These will be associated with the local confluence diagrams of a rewriting system on constructions in \cref{ss:critical}.


\subsection{Two types of two-faces} 
\label{two-types}
The \defn{dimension} of a construct $T$, or equivalently of its corresponding face in the associated hypergraph polytope, is given by
$$\dim T \eqdef \sum_{X\:\mathrm{node\: of}\: T}(|X|-1).$$
In particular, constructions have dimension $0$. 
Constructs of dimension $1$ have a single non-singleton node of the form $\set{x,y}$. 
Constructs $T$ of dimension $2$ are of two kinds:
\begin{itemize}
\item[(A)]  $T$ has exactly two non-singleton nodes $\set{x_1,x_2}$ and $\set{y_1,y_2}$, both of cardinality $2$. 
\item[(B)] $T$ has exactly one non-singleton node $\set{x_1,x_2,x_3}$ of cardinality $3$.
\end{itemize}

\subsubsection*{Type (A)}
\label{ss:typeA}
If $T$ is of type (A), we get the following generic picture.
$$ 
\xymatrix @-1.65pc { x_1(x_2)\cdots y_1(y_2) \ar @{-}[dddd]_{x_1(x_2)\cdots \set{y_1,y_2}} 
 \ar @{-}[rrrr]_{\set{x_1,x_2}\cdots y_1(y_2)} &&&& x_2(x_1)\cdots y_1(y_2) \ar @{-}[dddd]^{x_2(x_1)\cdots \set{y_1,y_2}} \\
 &&&&\\
 && \set{x_1,x_2} \cdots \set{y_1,y_2}&&\\
 &&&&\\
 x_1(x_2)\cdots y_2(y_1)  \ar @{-}[rrrr]_{\set{x_1,x_2}\cdots y_2(y_1)} &&  && x_2(x_1)\cdots y_2(y_1)}
 $$
The construct $T$ which corresponds to the 2-face above has two distinct non-singleton nodes $\set{x_1,x_2}$ and $\set{y_1,y_2}$, whence the schematized expression  $\set{x_1,x_2} \cdots \set{y_1,y_2}$.
All the other constructs are obtained by replacing in $T$ one or two of these nodes, say $\set{x_1,x_2}$, by a tree~$x_1(x_2)$ or $x_2(x_1)$ and redistributing the children of $\set{x_1,x_2}$ as children of $x_1$ or $x_2$, in a unique way dictated by connectivity. 

\subsubsection*{Type (B)}
\label{ss:typeB}
If $T$ is of type (B), then, up to permutation of $x_1,x_2,x_3$, we get four possible shapes corresponding to the number $N$ of  elements in $X\eqdef \set{x_1,x_2,x_3}$ that disconnect the other two in~$\hyper{K}\eqdef \hyper{H}_{\supp(\occ{T}{X})}$.
Like in the previous case, the constructs of the edges and vertices of the $2$-face corresponding to~$T$ result from replacing in $T$ the node $X$ with the indicated respective trees and redistributing uniquely the  children of $X$ (see also~\cref{instance-construct}).
 
\smallskip\noindent
(B1)  When $N=3$, that is when $\xyz{x_1}{\hyper{K}}{\set{x_2}\!,\! \set{x_3}}$, $\xyz{x_2}{\hyper{K}}{\set{x_1}\!,\! \set{x_3}}$, and
 $\xyz{x_3}{\hyper{K}}{\set{x_1}\!,\! \set{x_2}}$, we have
 
 $$\xymatrix @-1.65pc {&& x_1(x_2,x_3) \ar @{-}[ddll]_{\set{x_1,x_2}(x_3)} \ar @{-}[ddrr]^{\set{x_1,x_3}(x_2)}&& \\
 &&&&\\
 x_2(x_3,x_1) \ar @{-}[rrrr]^{\set{x_2,x_3}(x_1)} &&&& x_3(x_1,x_2)}$$

\smallskip\noindent
(B2) When $N=2$, that is when $\xyz{x_1}{\hyper{K}}{\set{x_2}\!,\! \set{x_3}}$, $\xyz{x_2}{\hyper{K}}{\set{x_1,x_3}}$ and
 $\xyz{x_3}{\hyper{K}}{\set{x_1}\!,\! \set{x_2}}$, we have
 
 $$ \xymatrix @-1.65pc {&& x_1(x_2,x_3) \ar @{-}[ddll]_{\set{x_1,x_2}(x_3)} \ar @{-}[ddrr]^{\set{x_1,x_3}(x_2)}&& \\
 &&&&\\
 x_2(x_1(x_3)) \ar @{-}[ddrr]_{x_2(\set{x_1,x_3})}&& \set{x_1,x_2,x_3}  && x_3(x_1,x_2)\ar @{-}[ddll]^{\set{x_2,x_3}(x_1)}\\
 &&&&\\
 &&  x_2(x_3(x_1))&&}$$

\smallskip\noindent
(B3) When $N=1$, that is when $\xyz{x_1}{\hyper{K}}{\set{x_2}\!,\! \set{x_3}}$, $\xyz{x_2}{\hyper{K}}{\set{x_1,x_3}}$ and
 $\xyz{x_3}{\hyper{K}}{\set{x_1,x_2}}$, we have
 
$$
 \xymatrix @-1.65pc {&& x_1(x_2,x_3) \ar @{-}[ddll]_{\set{x_1,x_2}(x_3)} \ar @{-}[ddrr]^{\set{x_1,x_3}(x_2)}&& \\
 &&&&\\
 x_2(x_1(x_3)) \ar @{-}[ddr]^{x_2(\set{x_1,x_3})}&& \set{x_1,x_2,x_3}  && x_3(x_1(x_2))\ar @{-}[ddl]_{x_3(\set{x_1,x_2})}\\
 &&&&\\
 &x_2(x_3(x_1))\ar @{-}[rr]^{\set{x_2,x_3}(x_1)}&  & x_3(x_2(x_1))&}
$$

\smallskip\noindent
(B4) When $N=0$, that is when $\xyz{x_1}{\hyper{K}}{\set{x_2,x_3}}$, $\xyz{x_2}{\hyper{K}}{\set{x_1,x_3}}$ and
 $\xyz{x_3}{\hyper{K}}{\set{x_1,x_2}}$, we have
 
 $$\xymatrix @-1.65pc {&& x_1(x_2(x_3)) \ar @{-}[ddll]_{\set{x_1,x_2}(x_3)} \ar @{-}[ddrr]^{x_1(\set{x_2,x_3})}&& \\
 &&&&\\
 x_2(x_1(x_3)) \ar @{-}[dd]^{x_2(\set{x_1,x_3})}&& \set{x_1,x_2,x_3} && x_1(x_3(x_2))\ar @{-}[dd]_{\set{x_1,x_3}(x_2)}\\
 &&  &&\\
 x_2(x_3(x_1)) \ar @{-}[ddrr]_{\set{x_2,x_3}(x_1)} &&  && \ar @{-}[ddll]^{x_3(\set{x_1,x_2})} x_3(x_1(x_2))\\
 &&&&\\
 &&  x_3(x_2(x_1))&&}
$$

By~\cref{xyz-reconnected}, we can read these pictures as describing the 
respective reconnected restrictions $\recrestr{\hyper{K}}{X}$ of $\hyper{K}$:
$$\begin{array}{lll}
\mathrm{(B1)} & \set{\set{x_1},\set{x_2},\set{x_3},\set{x_1,x_2,x_3}} & \mbox{(2-simplex)}\\
\mathrm{(B2)} & \set{\set{x_1},\set{x_2},\set{x_3},\set{x_1,x_3},\set{x_1,x_2,x_3}} & \mbox{(2-cube})\\
\mathrm{(B3)} & \set{\set{x_1},\set{x_2},\set{x_3},\set{x_1,x_3},\set{x_1,x_2},\set{x_1,x_2,x_3}} & \mbox{(2-associahedron)}\\
\mathrm{(B4)} & \set{\set{x_1},\set{x_2},\set{x_3},\set{x_1,x_3},\set{x_1,x_2},\set{x_2,x_3},\set{x_1,x_2,x_3}} & \mbox{(2-permutahedron)}
\end{array}$$

Incidentally, these  hypergraphs witness the fact that there do exist 2-faces of type (B) of  each  of these types: take $\hyper{H}$ to be one of those four $2$-dimensional hypergraphs, and $T$ to be their unique construct of dimension 2.


\subsection{$X$-faces and shapes}

In the rest of this section, we make the discussion above on $\cdots$ more formal.
For a 3-element subset $X=\set{x_1,x_2,x_3}$ of $H$, we say that a 2-face viewed as a contruct $T$ of $\hyper{H}$ is an \defn{$X$-face} if its unique non-singleton node is decorated by $X$.  By letting $X$ range over all subsets of $H$ of cardinality 3, we form in this way a partition of all 2-faces of type (B).
If $X$ is the root of $T$, the following  lemma invites us to see $T=X(\ldots)$ as an ``instantiation'' of~$X$ (viewed as the maximum face of $\recrestr{\hyper{H}}{X}$).

\begin{lemma} 
  \label{instance-construct} 
  If $\hyper{H}$ is a connected hypergraph, if $X$ is a subset of $H$ such that $|X|=3$ and~$T$ is a 2-dimensional construct with root $X$, then the map $\psi$ from the poset of subfaces of $T$ to the poset of faces of $\recrestr{\hyper{H}}{X}$, defined on a face $S$ by pruning all nodes of $S$ that are not subsets of $X$, is an order-isomorphism.
\end{lemma}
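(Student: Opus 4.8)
The plan is to prove that $\psi$ is an order-isomorphism by exhibiting an explicit inverse $\phi$ and checking that both directions are monotone for $\prec^*$. The first step is to unwind the shape of $T$. Since $\dim T = \sum_{Y\ \mathrm{node\ of}\ T}(|Y|-1) = 2$ and the root $X$ already contributes $|X|-1 = 2$, every non-root node of $T$ is a singleton, so $T = X(T_1,\dots,T_n)$ where $\hyper{H},X \leadsto H_1,\dots,H_n$ and each $T_i$ is a construction of $\hyper{H}_i$. I would also record two standing facts: $\supp(T) = H$ (an immediate induction on \cref{inductive-construct}), so that the hypergraph $\hyper{H}_{\supp(\occ{T}{X})}$ of \cref{two-types} is simply $\hyper{H}$ here; and $\recrestr{\hyper{H}}{X}$ is connected, since it contains $X = H\cap X$ as a hyperedge ($H \in \Sat(\hyper{H})$ because $\hyper{H}$ is connected, and a hypergraph containing its whole vertex set as a hyperedge admits no nontrivial partition), so that $({\cal A}(\recrestr{\hyper{H}}{X}),\prec^*)$ is defined.

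The combinatorial crux is that a singleton node cannot be un-contracted (written as a disjoint union of two nonempty sets), so the only node of $T$ that can be expanded when one descends in $\prec^*$ is $X$ and, recursively, its pieces. Consequently every subface $S \prec^* T$ is obtained from $T$ by replacing the root $X$ with a rooted tree $U_S$ whose nodes partition $X$ and re-grafting each $T_i$ onto a node of $U_S$; the grafting node is forced by the definition of a construct of $\hyper{H}$, being the node at which the component $H_i$ gets ``cut off''. Because no node contained in $X$ ever lies below a grafted $T_i$ (the nodes of a grafted $T_i$ are singletons of $H_i \subseteq H\setminus X$), the nodes of $S$ that are not subsets of $X$ are precisely those of the grafted copies of $T_1,\dots,T_n$, which form a down-closed sub-forest of $S$; deleting them leaves exactly $U_S$, so $\psi(S) = U_S$. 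In particular $S$ is completely determined by $U_S$, so $\psi$ is injective.

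It remains to identify the trees $U_S$ arising this way with the constructs of $\recrestr{\hyper{H}}{X}$, and to turn the grafting procedure into an honest inverse. Here \cref{xyz-reconnected} supplies the dictionary: for every triple inside $X$ it matches the disconnection relations $\xyz{x}{\hyper{H}}{\cdots}$ with those of the reconnected restriction, and \cref{partial-construct} guarantees that the partially grafted branches one meets along the way are genuine constructs. A short induction on the number of expansions then shows simultaneously that (i) $\psi(S) = U_S$ is a construct of $\recrestr{\hyper{H}}{X}$, so $\psi$ is well defined, and (ii) the reverse operation $\phi$ — take a construct $U$ of $\recrestr{\hyper{H}}{X}$ and re-graft $T_1,\dots,T_n$ at their forced positions — produces a subface of $T$ with $\psi(\phi(U)) = U$ and $\phi(\psi(S)) = S$; hence $\psi$ is a bijection with inverse $\phi$.

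For the order statement, note that within the poset of subfaces of $T$ every covering $S \prec S'$ must contract an edge lying in the $X$-part $U_S$, since contracting an edge of a grafted $T_i$ yields a construct that is no longer a subface of $T$. By the description above, this is exactly the contraction of the corresponding edge of $\psi(S) = U_S$ inside $\recrestr{\hyper{H}}{X}$, the subtrees $T_i$ merely being re-grafted onto the merged node. Thus $S \prec S'$ iff $\psi(S) \prec \psi(S')$, and passing to reflexive–transitive closures makes $\psi$ and $\phi$ mutually inverse order-isomorphisms. The main obstacle is the bookkeeping underlying the two middle steps: making precise that the grafting positions are forced and unique, and that ``legal expansion of $X$ in $\hyper{H}$'' matches ``construct of $\recrestr{\hyper{H}}{X}$''; everything there rests on \cref{xyz-reconnected}, and once that translation is set up the remaining arguments are routine induction and dimension counting. (One could alternatively read off parts of the conclusion from the explicit case analysis (B1)--(B4) of \cref{two-types}, which with $\hyper{H}_{\supp(\occ{T}{X})} = \hyper{H}$ already displays the poset of subfaces of $T$ alongside $\recrestr{\hyper{H}}{X}$, but the uniform argument via \cref{xyz-reconnected} and \cref{partial-construct} seems cleaner.)
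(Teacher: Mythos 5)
Your proposal is correct and follows essentially the same route as the paper: both prove the lemma by exhibiting the inverse $\phi$ that re-grafts the subtrees $T_1,\ldots,T_n$ at positions forced by connectivity, with \cref{xyz-reconnected} serving as the dictionary between disconnection in $\hyper{H}$ and in $\recrestr{\hyper{H}}{X}$, and then checking that $\psi$ and $\phi$ are mutually inverse and monotone. The only difference is presentational: you argue uniformly (structural description of the subfaces of $T$ plus induction on expansions), whereas the paper runs the same construction through the representative case analysis of the shapes (B1)--(B4); the level of detail on the ``forced re-grafting'' step is comparable in both.
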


\begin{proof}
We shall build an inverse $\phi$ of $\psi$.
When discussing type (B) 2-faces, we have described up to permutation all the possible connected hypergraphs on the set $X$ of vertices and their respective posets of faces. 
We will treat the case where ${\cal A}(\recrestr{\hyper{H}}{X})$ is the poset described in case (B2). The other cases are similar.
Pick the 0-face~$S\eqdef  x_1(x_2,x_3)$. 
We map $S$ to a $0$-face $\phi(S)$ of $T$ as follows. 
Let $\hyper{H},x\leadsto \{H_1,\ldots,H_n\}$. 
By \cref{xyz-reconnected}, we have $\xyz{x_1}{\hyper{H}}{\set{x_2},\set{x_3}}$, hence we have, say $x_2\in H_1$ and $x_3\in H_2$. Then let
  $\hyper{H_1},x_2\leadsto \{H_{1,1},\ldots,H_{1,p}\}$ and $\hyper{H_2},x_3\leadsto \{H_{2,1},\ldots,H_{2,q}\}$.
  Then we have $\hyper{H},X\leadsto \{H_{1,1},\ldots,H_{1,p},H_{2,1},\ldots,H_{2,q},H_3,\ldots H_n\}$, so that $T$ writes as
  $$T= X(T_{1,1},\ldots,T_{1,p},T_{2,1},\ldots,T_{2,q},T_3,\ldots T_n).$$ 
All these data determine uniquely a 0-dimensional subface of $T$, namely
  $$\phi(S)\eqdef x_1(x_2(T_{1,1},\ldots,T_{1,p}),x_3(T_{2,1},\ldots,T_{2,q}),T_3,\ldots T_n).$$
It is plain that $\psi(\phi(S))=S$.
  The same applies to all other 0-dimensional (resp. 1-dimensional) faces of $\recrestr{\hyper{H}}{X}$, establishing $\phi$ as a bijection, which is also easily seen to be monotonic: for example, we have
 $$\phi(\set{x_1,x_2}(x_3))= \set{x_1,x_2}(x_3(T_{2,1},\ldots,T_{2,q}),T_{1,1},\ldots,T_{1,p},T_3,\ldots T_n),$$
 evidencing $\phi(x_1(x_2,x_3))\preceq \phi(\set{x_1,x_2}(x_3))$. 
 The map $\psi$ is also monotonic, since the above pruning does not affect the place where the contraction occurs -- e.g., the edge of $\phi(x_1(x_2,x_3))$ that is contracted to get $\phi(\set{x_1,x_2}(x_3))$ is the edge between $x_1$ and $x_2$, which can thus be contracted in the preimage $x_1(x_2,x_3)$ to yield the preimage $\set{x_1,x_2}(x_3)$.
 Finally, we set
 $$\phi(\set{x_1,x_2,x_3})\eqdef \set{x_1,x_2,x_3}(T_{1,1},\ldots,T_{1,p},T_{2,1},\ldots,T_{2,q},T_3,\ldots T_n)=T,$$
 which concludes the proof.
 \end{proof}

 \begin{corollary} 
  \label{instance-construct-general} 
  If $\hyper{H}$ is a connected hypergraph, if $X$ is a subset of $H$ such that $|X|=3$ and $T$ is an $X$-face of $\hyper{H}$, then the poset of faces of $T$ is isomorphic to the poset of faces of~$\recrestr{(\hyper{H}_{\supp(\occ{T}{X})})}{X}$.
\end{corollary}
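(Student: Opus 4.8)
The plan is to reduce Corollary~\ref{instance-construct-general} to Lemma~\ref{instance-construct} by passing to the subconstruct rooted at $X$. Recall from Remark~\ref{subconstruct-restriction} that if $T$ is a construct of $\hyper{H}$ and $X$ is a node of $T$, then $\occ{T}{X}$ is a construct of $\hyper{H}_{\supp(\occ{T}{X})}$. When $T$ is an $X$-face, its only non-singleton node is $X$, so in particular $X$ is a node of $T$; moreover $\occ{T}{X}$ has root $X$, has the same unique non-singleton node $X$, and hence is itself a $2$-dimensional construct with root $X$ — now of the connected hypergraph $\hyper{K}\eqdef\hyper{H}_{\supp(\occ{T}{X})}$ rather than of $\hyper{H}$.

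First I would observe that the poset of faces of $T$ is isomorphic to the poset of faces of $\occ{T}{X}$. This is the crux of the argument and the step I expect to require the most care: a subface $S$ of $T$ is obtained by a sequence of edge contractions, and since all contractions that change the dimension or reach the top eventually involve only $X$ and its singleton children (the rest of $T$ being already a construction, i.e.\ all singletons with no contractible structure that stays within a subface of dimension $\leq 2$), the subfaces of $T$ are in bijection with subfaces of $\occ{T}{X}$ via restriction to the part of the tree at and below $X$, the complementary ``frozen'' part of $T$ outside $\occ{T}{X}$ being common to all subfaces. Concretely, writing $T$ with its subtree at $X$ replaced by an arbitrary subface of $\occ{T}{X}$ gives exactly the subfaces of $T$, and this assignment is manifestly an order-isomorphism onto $({\cal A}(\occ{T}{X}),\prec^*)$ for the covering relation restricted accordingly — the place where each contraction occurs is unaffected by the surrounding frozen nodes, exactly as in the monotonicity argument at the end of the proof of Lemma~\ref{instance-construct}.

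Then I would apply Lemma~\ref{instance-construct} directly to the connected hypergraph $\hyper{K}=\hyper{H}_{\supp(\occ{T}{X})}$, the subset $X\subseteq\supp(\occ{T}{X})$ with $|X|=3$, and the $2$-dimensional construct $\occ{T}{X}$ with root $X$. The lemma yields that the poset of subfaces of $\occ{T}{X}$ is order-isomorphic to the poset of faces of $\recrestr{\hyper{K}}{X}=\recrestr{(\hyper{H}_{\supp(\occ{T}{X})})}{X}$. Composing this with the isomorphism of the previous paragraph gives the desired conclusion that the poset of faces of $T$ is isomorphic to the poset of faces of $\recrestr{(\hyper{H}_{\supp(\occ{T}{X})})}{X}$.

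In summary, the only genuinely new content beyond Lemma~\ref{instance-construct} is the localization step ``poset of faces of $T$ $\cong$ poset of faces of $\occ{T}{X}$'', which follows because edge contractions producing subfaces of a $2$-dimensional $X$-face never touch nodes outside $\occ{T}{X}$; everything else is a formal application of the already-proven lemma together with Remark~\ref{subconstruct-restriction}. I would keep the write-up short, spelling out the bijection on subfaces explicitly (subface of $T$ $\mapsto$ its restriction to the subtree at $X$) and noting monotonicity in both directions as in the cited proof, then invoking Lemma~\ref{instance-construct}.
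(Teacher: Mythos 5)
Your proposal is correct and follows essentially the same route as the paper: the paper's proof consists precisely of the observation that the face poset of $T$ is isomorphic to that of $\occ{T}{X}$ (which you justify in more detail via the ``frozen part'' substitution) followed by an application of Lemma~\ref{instance-construct} to the connected hypergraph $\hyper{H}_{\supp(\occ{T}{X})}$.
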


\begin{proof} This is an immediate consequence of the previous lemma and of the observation that the poset of faces of $T$ is isomorphic to the poset of faces of $\occ{T}{X}$.
\end{proof}
 
Based on this corollary, we shall say, for any $X$-face $T$, that $T$ has \defn{shape} 
$\recrestr{(\hyper{H}_{\supp(\occ{T}{X})})}{X}$.


\section{Hypergraphic rewriting systems} 
\label{s:rewriting}

We associate to each hypergraph two term rewriting systems, one given on its constructs, and the other given on its constructions.
We show that the latter is terminating and confluent, and describe its critical pairs. 


\subsection{Recollections on rewriting} \label{recollection-section}

A \defn{signature} $\Sigma$ is a tuple $(V,F,S,\ari,\outsort,\insort)$ made of 
\begin{itemize}
  \item a set $V$ of \defn{variables},
  \item a non-empty set $F$ of \defn{function symbols}, and
  \item a set $S$ of \defn{sorts},
\end{itemize}
together with an \defn{arity}, \defn{output sort} and \defn{input sort} functions
\begin{itemize}
  \item $\ari : F \to \mathbb{N}$,
  \item $\outsort : F \cup V \to S$,
  \item $\insort : F \to \Sigma_{n \geq 0} S^{n}$, such that for $f \in F$, we have $\insort(f) \in S^{\ari(f)}$. 
\end{itemize}
The $i$th component of $\insort(f)$ is denoted $\insort(f,i)$.
The set $\Ter(\Sigma)$ of \defn{terms} over a signature $\Sigma$ is defined inductively as follows. 
\begin{enumerate}
  \item If $t \in V$ is a variable, then $t$ is a term.
  \item If $f \in F$ is an arity $n$ function symbol, and $t_1,\ldots,t_n$ are terms such that $\outsort(t_i)=\insort(f,i)$, then $f(t_1,\ldots,t_n)$ is a term, and $\outsort(f(t_1,\ldots,t_n))\eqdef \outsort(f)$.
\end{enumerate}
We say that a term \defn{has sort}  $\outsort(t)$.
For a term $t \in \Ter(\Sigma)$, its set of \defn{variables} is defined as 
\begin{equation*}
  \var(t) \eqdef  
  \begin{cases}
    \{t\} & \text{ if } t \in V, \\
    \bigcup_{1 \leq i \leq n}\var(t_i) & \text{ if } t=f(t_1,\ldots,t_n).
  \end{cases}
\end{equation*}
A term $t$ is \defn{linear} if all its variables occur only once in it, i.e.\ if the unions in the inductive definition of $ \var(t)$ are disjoint.
A term $t$ is \defn{closed} (resp.  \defn{open}) if it does not contain any variable, i.e., if $\var(t)=\emptyset$ (resp. $\var(t)\neq\emptyset$).
A \defn{rewriting  rule} over $\Sigma$ is an ordered pair $(l,r)$ of terms in $\Ter(\Sigma)$, denoted $l \to r$, such that
$\outsort(l)=\outsort(r)$ and  the following conditions hold:
\begin{enumerate}
  \item the first term $l$ is not a variable, that is $l \notin V$,
  \item the variables of the second term are already in the first term, that is $\var(r) \subseteq \var(l)$.
\end{enumerate}

\begin{definition}
  A many-sorted \defn{term rewriting system} is an ordered pair $(\Sigma,R)$ made of a signature and of a set of rewriting rules $R$ over $\Sigma$.
\end{definition}

A \defn{context} $C[-]$ is defined inductively as follows.
\begin{enumerate}
  \item We give ourselves symbols $[-]_a$ for~$a\in S$, which we declare to be contexts with $\outsort([-]_a) \eqdef a$.
  \item If $f \in F$ is an arity $n$ function symbol, $t_1,\ldots,t_{i-1},t_{i+1},\ldots,t_n$ are terms such that $\outsort(t_j)=\insort(f,j)$ and $C$ is a context such that $\outsort(C)=\insort(f,i)$, then 
  $$f(t_1,\ldots,t_{i-1},C,t_{i+1},\ldots,t_n)$$
   is a context, and $\outsort(f(t_1,\ldots,t_n))\eqdef \outsort(f)$.
\end{enumerate}
Note that by construction a context contains exactly one symbol $[-]_a$ for some $a \in S$, called the \defn{hole}.
Given such a context $C$ and a term $t$ with $\outsort(t)=a$, then we denote by~$C[t]$ the term obtained by  filling the hole with $t$, i.e., by replacing $[-]_a$ by $t$ in $C$.

A \defn{substitution} is a map $\sigma : \Ter(\Sigma) \to \Ter(\Sigma)$ which satisfies $$\sigma(f(t_1,\ldots,t_n))=f(\sigma(t_1),\ldots,\sigma(t_n))$$ for all terms $f(t_1,\ldots,t_n)$ in $\Ter(\Sigma)$.
That is, a substitution is completely determined by its value on variables. We say that $t'$ is an \defn{instance} of $t$ if
$t'=\sigma(t)$ for some $\sigma$, and that $t'$ is a \defn{closed instance} of $t$ if moreover $t'$ is closed.

A rewriting rule $l \to r$ determines a set of \defn{rewritings} $\sigma(l)\to \sigma(r)$ for all substitutions $\sigma$. 
We say that $\sigma(l)\to \sigma(r)$ is an \defn{instance} of $l \to r$.
These in turn give rise to \defn{rewriting steps} $C[\sigma(l)] \to C[\sigma(r)]$, whenever $C[\sigma(l)]$ is defined. 
We say that $C[\sigma(l)] \to C[\sigma(r)]$ is an \defn{instance in context} of the rewriting rule $l \to r$.

A rewriting system is \defn{terminating} if it admits  no infinite  reduction sequence, that is, no sequence $t=t_0\to t_1\to\cdots\to t_n \to\cdots$ such that, for each $i$, $t_i\to t_{i+1}$ is a rewriting step.
A term $l \in \Ter(\Sigma)$ is \defn{reducible} if there exists an $r \in \Ter(\Sigma)$ such that $l \to r$; otherwise it is called  \defn{irreducible}.
We say that $r$ is a \defn{normal form} of $l$ if $l \to^* r$ and $r$ is irreducible.

We say that $(\Sigma,R)$ is \defn{locally confluent} (resp.\ \defn{confluent}) if for all $s,t_1,t_2 \in \Ter(\Sigma)$ such that $t_1 \leftarrow s \to t_2$ (resp.\ $t_1 \:{}^*\!\!\leftarrow s \to^* t_2$), there exists a term $t$ with $t_1 \to^* t\; {}^*\!\!\leftarrow t_2$.  
The diagram  
\begin{center}
  \begin{tikzcd}
    & s \arrow[ld] \arrow[rd] &                     \\
t_1 \arrow[rd, "*"'] &                         & t_2 \arrow[ld, "*"] \\
    & t                       &                    
\end{tikzcd}
\end{center}
is called a \defn{local confluence diagram}.

There are two cornerstone lemmas in term rewriting theory (\cref{thm:Newman,l:local-confluence-critical-pair} below).

\begin{lemma}[Newman's lemma]
  \label{thm:Newman}
  If $(\Sigma,R)$ is terminating, then it is confluent if and only if it is locally confluent.
\end{lemma}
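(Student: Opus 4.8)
The statement to prove is Newman's lemma: if $(\Sigma, R)$ is terminating, then it is confluent if and only if it is locally confluent.

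My plan is the standard Noetherian-induction proof, which I would organize as follows.

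\medskip

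\textbf{Setup and the easy direction.} First observe that confluence trivially implies local confluence: if $t_1 \leftarrow s \to t_2$, then in particular $t_1 \:{}^*\!\!\leftarrow s \to^* t_2$, so the confluence hypothesis directly supplies the common reduct $t$. So the content is entirely in the converse direction.

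\medskip

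\textbf{The well-founded order.} Since $(\Sigma, R)$ is terminating, there is no infinite reduction sequence. The plan is to use this to do well-founded induction on terms, where $s' \prec s$ means $s \to^+ s'$ (i.e.\ $s$ reduces to $s'$ in at least one step). Termination says precisely that there is no infinite descending chain $s \to s_1 \to s_2 \to \cdots$; combined with the fact that $\to^+$ is transitive, this makes $\prec$ a well-founded strict partial order (one should note that $\prec$ is indeed irreflexive, as a loop $s \to^+ s$ would yield an infinite reduction sequence). So Noetherian induction is available.

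\medskip

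\textbf{The induction.} Fix $s$ and assume, as induction hypothesis, that every $s'$ with $s \to^+ s'$ is confluent. I want to show $s$ is confluent, i.e.\ given $t_1 \:{}^*\!\!\leftarrow s \to^* t_2$, find a common reduct. If either reduction from $s$ is empty (zero steps), say $s = t_1$, then $t_2$ itself is a common reduct and we are done. Otherwise write the two reductions with explicit first steps:
$$s \to u_1 \to^* t_1, \qquad s \to u_2 \to^* t_2.$$
By local confluence applied to $u_1 \leftarrow s \to u_2$, there is a term $v$ with $u_1 \to^* v$ and $u_2 \to^* v$. Now $u_1$ satisfies $s \to^+ u_1$, so by the induction hypothesis $u_1$ is confluent; applying this to $t_1 \:{}^*\!\!\leftarrow u_1 \to^* v$ gives a common reduct $w_1$ with $t_1 \to^* w_1$ and $v \to^* w_1$. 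Similarly $u_2$ is confluent (since $s \to^+ u_2$), and applying it to $v \to^* w_1$ together with $u_2 \to^* t_2$ — that is, to the pair $w_1 \:{}^*\!\!\leftarrow u_2 \to^* t_2$ obtained by composing $u_2 \to^* v \to^* w_1$ — yields a common reduct $t$ with $w_1 \to^* t$ and $t_2 \to^* t$. Then $t_1 \to^* w_1 \to^* t$ and $t_2 \to^* t$, so $t$ is the desired common reduct of $t_1$ and $t_2$. This closes the induction, and confluence of $s$ for all $s$ is exactly confluence of $(\Sigma, R)$.

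\medskip

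The main subtlety — the step I would be most careful about — is the bookkeeping of \emph{which} term the induction hypothesis is applied to: it must always be a strict $\to^+$-successor of $s$ (here $u_1$ and $u_2$), never $s$ itself, and certainly not $v$ or $w_1$ (which need not be successors of $s$ in general, though in fact here they are reducts of $u_1$, hence of $s$; but the clean argument only uses confluence of $u_1$ and $u_2$). A standard diagram chase makes this transparent, but it is the one place where a naive attempt to "just apply confluence recursively" goes wrong. Everything else is routine.
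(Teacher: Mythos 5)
Your proof is correct: it is the standard Noetherian-induction argument (due to Huet) — well-founded induction along $\to^+$, peeling off the first step of each reduction, closing the peak with local confluence, and then invoking the induction hypothesis only on the one-step reducts $u_1$ and $u_2$ — and your bookkeeping of where the induction hypothesis is applied is exactly right. Note that the paper itself does not prove this lemma; it is recalled as one of the two cornerstone results of term rewriting theory (alongside the Knuth--Bendix lemma) and used as a black box, so there is no divergence from the paper's argument to report.
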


We can further characterize local confluence. 
A pair of reduction steps $s \to t_1$ and $s\to t_2$ is said to form a \defn{critical pair}   if 
\begin{enumerate}
  \item there are no terms $s',t_1',t_2'$ and context $C[-]$ such that $s' \to t_1'$, $s' \to t_2'$, $s=C[s']$, $t_1=C[t_1']$ and $t_2=C[t_2']$.
  \item there are no terms $s',t_1',t_2'$ and substitution $\sigma$ such that $s' \to t_1'$, $s' \to t_2'$, $s=\sigma(s')$, $t_1=\sigma(t_1')$ and $t_2=\sigma(t_2')$.
\end{enumerate}
A local confluence diagram  for a critical pair is called a \defn{critical confluence diagram}.
It can be shown that every pair of rewriting steps  $s \to t_1$ and $s\to t_2$ falls in exactly one of the following situations (up to permuting $t_1$ and $t_2$):
\begin{enumerate}
\item[(a1)] One can write $s=D[s_1]^1[s_2]^2$ and there exist $t'_1$, $t'_2$ such that $s_1\to t'_1$, $s_2\to t'_2$, $t_1=D[t'_1]^1[s_2]^2$ and $t_2=D[s_1]^1[t'_2]^2$. 

\smallskip\noindent
Here $D$ is a context with two holes. The inductive definition  builds on the above defined terms and  contexts (rebaptised  as contexts with one hole), as follows: 
\begin{enumerate}
 \item[(1)] If $f \in F$ is an arity $n$ function symbol, with $n>1$, $t_1,\ldots,t_{i-1},t_{i+1},$ $\ldots,$ $t_{j-1},t_{j+1},\ldots,t_n$ are terms such that 
 $\outsort(t_j)=\insort(f,j)$, $C^1$ and $C^2$  are contexts with one hole such that $\outsort(C^1)=\insort(f,i)$ and $\outsort(C^2)=\insort(f,j)$
 then 
  $$f(t_1,\ldots,t_{i-1},C^1,t_{i+1},\ldots,t_{j-1},C^2,t_{j+1},\ldots,t_n)$$
   is a context with two holes, and $\outsort(f(t_1,\ldots,t_n))\eqdef \outsort(f)$.  We label the hole of $C^1$ (resp. $C^2$) with un upperscript $1$ (resp. $2$).
 \item[(2)] If $f \in F$ is an arity $n$ function symbol, $t_1,\ldots,t_{i-1},t_{i+1},\ldots,t_n$ are terms such that $\outsort(t_j)=\insort(f,j)$ and $C$ is a context with two holes such that $\outsort(C)=\insort(f,i)$, then 
  $$f(t_1,\ldots,t_{i-1},C,t_{i+1},\ldots,t_n)$$
 is a context with two holes, and $\outsort(f(t_1,\ldots,t_n))\eqdef \outsort(f)$.
\end{enumerate}
We define the filling of a context with two holes in the obvious way.
\item[(a2)] $s,t_1,t_2$ are such that there exists a substitution $\sigma$, a variable $x$, a context $C$  and two terms $t'_1$ and $t'_2$ such that
$s=C[\sigma(s')]$, $s'\to t'_1$, $t_1=C[\sigma(t'_1)]$, $\sigma(x)\to t'_2$ and 
$t_2=C[\sigma'(s')]$, where $\sigma'(x)\eqdef t'_2$ and $\sigma'(y)\eqdef \sigma(y)$ for $y\not = x$.
\item[(b)] $s,t_1,t_2$ are such that there exists a critical pair $s'\rightarrow t'_1$, $s'\rightarrow t'_2$, a substitution $\sigma$ and a context $C$ such that $s=C[\sigma(s')]$, $t_1=C[\sigma(t'_1)]$, and $t_2=C[\sigma(t'_2)]$.
\end{enumerate}
We refer to \cref{Huet-MacLane} for an illustration of these three cases in the seminal rewriting system underlying Mac Lane's coherence theorem.
Situation (b) above is called an \defn{overlapping}, and critical pairs are alternatively called minimal overlappings.
It can also be shown that all pairs of type~(a1) or~(a2) above can always be completed into local confluence diagrams, that is, they \defn{converge}. 
For pairs of type (a1), those  diagrams are squares, i.e. $t_1 \to t \leftarrow t_2$. 
If the system is \defn{linear}, that is if all variables on each side of the rewriting rules are distinct, then the local confluence diagrams of type~(a2) are also squares.
Finally, we observe that if critical confluence diagrams exist for all critical pairs, then, by taking their instantiations in context, we get all local confluence diagrams of type (b).  These are the ingredients of the proof of the following second cornerstone lemma.

\begin{lemma}[Knuth-Bendix lemma]
  \label{l:local-confluence-critical-pair}
  A term rewriting system is locally confluent if and only if every critical pair is convergent. 
\end{lemma}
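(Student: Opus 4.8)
The plan is to prove the two implications separately; the forward one is immediate, and the backward one rests entirely on the trichotomy for pairs of rewriting steps recalled just above (cases (a1), (a2), (b)), together with the fact that pairs of type (a1) and (a2) always converge.

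For the ``only if'' direction: if $(\Sigma,R)$ is locally confluent, then any critical pair $s\to t_1$, $s\to t_2$ is in particular a local peak, so there is a term $t$ with $t_1\to^* t\; {}^*\!\!\leftarrow t_2$; that is, every critical pair converges, and nothing more is needed. For the ``if'' direction I would assume that every critical pair converges and take an arbitrary local peak $t_1\leftarrow s\to t_2$. By the case distinction recalled above, up to swapping $t_1$ and $t_2$ this peak is of type (a1), (a2) or (b), and I would dispose of the three cases in turn.

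In case (a1) the two redexes sit in disjoint holes of a two-hole context $D$, and one just notes that $t_1=D[t'_1]^1[s_2]^2$ and $t_2=D[s_1]^1[t'_2]^2$ both rewrite in one step to $D[t'_1]^1[t'_2]^2$, giving a square local confluence diagram. In case (a2) the overlap is a \emph{variable overlap}: $s'\to t'_1$ is a rewriting and $\sigma(x)\to t'_2$ for a variable $x$ occurring in $s'$; here I would reduce, starting from $t_1=C[\sigma(t'_1)]$, every occurrence of the subterm $\sigma(x)$ remaining in $t'_1$ down to $t'_2$, and, starting from $t_2=C[\sigma'(s')]$, replay the single step $s'\to t'_1$ under the modified substitution $\sigma'$; both sides then reach $C[\sigma'(t'_1)]$, so the peak converges (the diagram being a square when the system is linear). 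In both of these cases convergence holds unconditionally, without invoking the hypothesis on critical pairs. The case that does use the hypothesis is (b): there one has a critical pair $s'\to t'_1$, $s'\to t'_2$, a substitution $\sigma$ and a context $C$ with $s=C[\sigma(s')]$, $t_1=C[\sigma(t'_1)]$, $t_2=C[\sigma(t'_2)]$. By hypothesis this critical pair converges, so there is a term $u$ with $t'_1\to^* u$ and $t'_2\to^* u$. It then remains to lift these reductions, for which I would first record the routine fact that a single rewriting step is stable under substitution and under enclosing in a context: if $v=D[\tau(l)]\to D[\tau(r)]=w$, then, extending $\sigma$ to contexts by fixing the hole, $E[\sigma(v)]=E[\sigma(D)[(\sigma\circ\tau)(l)]]\to E[\sigma(D)[(\sigma\circ\tau)(r)]]=E[\sigma(w)]$ is again a rewriting step of the same rule, whence (by induction on length) the reflexive--transitive closure is stable too. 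Applying this with $E=C$ to $t'_1\to^* u$ and $t'_2\to^* u$ yields
\[
t_1=C[\sigma(t'_1)]\to^* C[\sigma(u)]\; {}^*\!\!\leftarrow C[\sigma(t'_2)]=t_2 ,
\]
so the peak converges. Since the three cases are exhaustive, every local peak converges, i.e.\ the system is locally confluent.

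I expect the main difficulty to be bookkeeping rather than mathematics, since the trichotomy and the unconditional convergence of cases (a1) and (a2) are taken as established in the discussion preceding the lemma. The two delicate points, both handled by straightforward structural inductions (on terms/contexts and on the length of reduction sequences), are making the ``replay under $\sigma'$'' argument of case (a2) precise when $x$ occurs several times, and proving the stability lemma used in case (b) to push the joining reduction of a critical pair through an arbitrary substitution and context.
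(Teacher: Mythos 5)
Your proof is correct and follows essentially the same route as the paper, which does not spell out a formal proof but explicitly lists the same ingredients in the preceding paragraph (the trichotomy (a1)/(a2)/(b), the unconditional convergence of (a1) and (a2), and the instantiation in context of critical confluence diagrams for case (b)); your write-up just makes the stability-under-substitution-and-context step and the variable-overlap replay explicit. No gaps beyond routine bookkeeping.
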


Thus, for a terminating rewriting system, it suffices to check that all critical pairs converge to conclude that the system is confluent. 

\smallskip
We shall need the following lemma.

\begin{lemma} \label{closed-termination}
If the signature $\Sigma$ is such that for every sort $s$ there exists a closed term $t$ such that $\outsort(t)=s$, and if  every closed term $t$ is such that every reduction sequence starting from $t$ terminates, then $(\Sigma,R)$ is terminating.
\end{lemma}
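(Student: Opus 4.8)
The plan is to argue by contradiction: assuming $(\Sigma,R)$ admits an infinite reduction sequence, I would \emph{close it up} into an infinite reduction sequence starting from a closed term, contradicting the second hypothesis.

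The first ingredient is the standard fact that rewriting is \emph{stable under substitution}: if $s\to s'$ is a rewriting step and $\sigma$ is any sort-preserving substitution, then $\sigma(s)\to\sigma(s')$ is again a rewriting step. Writing $s=C[\tau(l)]$ and $s'=C[\tau(r)]$ for a rule $l\to r$, a context $C$ and a substitution $\tau$, one checks by a straightforward induction on the structure of $C$ that $\sigma(C[u])=C^{\sigma}[\sigma(u)]$, where $C^{\sigma}$ is the context obtained by applying $\sigma$ to the term arguments occurring in $C$ while leaving the hole untouched, and that the composite $\sigma\circ\tau$ is again a substitution, satisfying $\sigma(\tau(v))=(\sigma\circ\tau)(v)$ for every term $v$. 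It then follows that $\sigma(s)=C^{\sigma}[(\sigma\circ\tau)(l)]\to C^{\sigma}[(\sigma\circ\tau)(r)]=\sigma(s')$ is an instance in context of $l\to r$. No hypothesis on the variables of $l,r$ is needed for this direction.

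Now suppose, for contradiction, that there is an infinite reduction sequence $t_0\to t_1\to t_2\to\cdots$. The term $t_0$ has finitely many variables, say $\var(t_0)=\{x_1,\ldots,x_k\}$. By the first hypothesis, for each $i\in\{1,\ldots,k\}$ choose a closed term $u_i$ with $\outsort(u_i)=\outsort(x_i)$, and let $\sigma$ be the sort-preserving substitution with $\sigma(x_i)=u_i$ for $1\leq i\leq k$ and $\sigma(y)=y$ for every other variable $y$. Then $\sigma(t_0)$ is a well-formed closed term, and applying the stability lemma to each step of the sequence yields an infinite reduction sequence $\sigma(t_0)\to\sigma(t_1)\to\sigma(t_2)\to\cdots$ starting from the closed term $\sigma(t_0)$. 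This contradicts the second hypothesis, so $(\Sigma,R)$ is terminating.

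The argument is essentially routine; the only point that requires a little care is the bookkeeping in the stability lemma — the commutation of $\sigma$ with contexts and the composition of substitutions — together with ensuring that $\sigma$ is chosen sort-preserving so that $\sigma(t_0)$ is a genuine term of $\Ter(\Sigma)$, which is precisely what the inhabitation hypothesis on sorts guarantees.
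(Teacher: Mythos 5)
Your proposal is correct and follows essentially the same route as the paper: substitute closed terms for the variables (possible by the inhabitation hypothesis) and observe that any infinite reduction sequence from an open term transports, by stability of rewriting under substitution, to an infinite reduction sequence from the resulting closed term, contradicting the hypothesis. The paper states this in two lines, leaving the stability lemma implicit, whereas you spell out the bookkeeping; the content is the same.
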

\begin{proof} 
Let $t$ be an arbitrary term. By our assumption, there exists a substitution $\sigma$ such that $\sigma(t)$ is closed. 
The result then follows immediately by contradiction: any infinite reduction sequence from 
$t$ would reflect in an infinite reduction sequence for $\sigma(t)$.
\end{proof}


\subsection{Rewriting on constructs}
\label{ss:rewriting-constructs}
We define our first term rewriting system on constructs. 
From now on, we will consider only \emph{ordered} hypergraphs.

\begin{definition} \label{def:signature-hyper}
  Let $\hyper{H}$ be an ordered connected hypergraph. 
Consider the \defn{signature} $\Sigma_\hyper{H}$ made of the following data: 
\begin{itemize}
  \item The variables are the connected subsets of $H$, that is $V\eqdef \{ X \subseteq H \ | \ \hyper{H}_{X} \text{ is connected}\}$. 
  \item The function symbols are ordered pairs $(X,Y)$ where Y is a connected subset of $H$ and  X is a non-empty subset of $Y$:
  $$F\eqdef \{(X,Y) \ | \ \emptyset \neq X \subseteq Y \subseteq H, \ \hyper{H}_Y \text{ is connected}\}.$$
  \item The sorts are the connected subsets of $H$, i.e., $S\eqdef \{ X \subseteq H \ | \ \hyper{H}_X \text{ is connected}\}$.
  \item For $(X,Y) \in F$, we define $\ari(X,Y)$ as the number of connected components of~$\hyper{H}_Y\setminus X$.
  \item Variables $X \in V$ are their own sort, i.e., $\outsort(X)=X$, while function symbols $(X,Y) \in F$ are of sort $\outsort(X,Y)=Y$.
  \item For function symbols $(X,Y) \in F$ such that $\hyper{H}_Y,X \leadsto Y_1,\ldots,Y_n$, and for $1 \leq i \leq n$, we set $\insort((X,Y),i)=Y_i$.
\end{itemize}
\end{definition}

\begin{rem} \label{open-to-closed}
  Note that, according to this definition, the variables appearing in a term are always distinct. 
  Therefore, we can unambiguously identify them (as we did, and will continue to do) with their  sort.
We 
observe that any term $t$ may be considered as a closed term $\overline{t}$, by replacing all its variables $Y$ with $0$-ary symbols $(Y,Y)$. Formally, $\overline{t}=\sigma(t)$ where, for all variables $Y$ of $t$, $\sigma(Y)=(Y,Y)$.
\end{rem}

\begin{lemma} 
  \label{l:bijection-terms}
  There is a bijection between the set of closed terms of sort $H$ over $\Sigma_\hyper{H}$ and the set of constructs of $\hyper{H}$, defined by projecting all function symbols $(X,Y)$ to their first component $X$.
\end{lemma}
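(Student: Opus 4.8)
The statement claims a bijection between closed terms of sort $H$ over $\Sigma_\hyper{H}$ and constructs of $\hyper{H}$, realized by projecting each function symbol $(X,Y)$ to $X$. The plan is to exhibit this projection map explicitly, describe its inverse, and check both are well-defined by simultaneous induction on the inductive structures on both sides (terms on one side, \cref{inductive-construct} on the other).

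\textbf{Step 1: the map from terms to constructs.} I would define, by induction on the term $t$, a labeled tree $\Phi(t)$: since $t$ is closed of sort $H$, by the definition of terms it must be of the form $t=(X,H)(t_1,\ldots,t_n)$ where $(X,H)\in F$ — so $\emptyset\neq X\subseteq H$ and $\hyper{H}_H=\hyper{H}$ is connected (given) — and where $\hyper{H},X\leadsto H_1,\ldots,H_n$ with $\outsort(t_i)=H_i$, i.e.\ each $t_i$ is a closed term of sort $H_i$. Set $\Phi(t)\eqdef X(\Phi(t_1),\ldots,\Phi(t_n))$. Here I must note that each $\hyper{H}_{H_i}=\hyper{H}_i$ is connected (it is a connected component of a plain restriction), so $t_i$ is a closed term of sort $H_i$ over the signature $\Sigma_{\hyper{H}_i}$, obtained by the obvious restriction of $\Sigma_\hyper{H}$; this is where I would invoke \cref{subconstruct-restriction} to match the recursion on constructs. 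By the inductive hypothesis $\Phi(t_i)$ is a construct of $\hyper{H}_i$, and then clause (2) of \cref{inductive-construct} gives that $X(\Phi(t_1),\ldots,\Phi(t_n))$ is a construct of $\hyper{H}$ (clause (1) handles the base case $X=H$, $n=0$). Crucially, $\Phi$ is exactly "project $(X,H)$ to $X$ at every node," matching the statement.

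\textbf{Step 2: the inverse.} Conversely, given a construct $T$ of $\hyper{H}$, by \cref{inductive-construct} it is of the form $X(T_1,\ldots,T_n)$ with $\hyper{H},X\leadsto H_1,\ldots,H_n$ and $T_i$ a construct of $\hyper{H}_i$. Define $\Psi(T)\eqdef (X,H)(\Psi(T_1),\ldots,\Psi(T_n))$; the key point is that $(X,H)$ is a legal function symbol of $\Sigma_\hyper{H}$ with $\ari(X,H)=n$ and $\insort((X,H),i)=H_i$, so the term typechecks provided $\outsort(\Psi(T_i))=H_i$, which holds by induction (with the same restriction-of-signature bookkeeping as above). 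Then I would check $\Phi\circ\Psi=\id$ and $\Psi\circ\Phi=\id$, both by a one-line induction: the node labels and the tree shape are preserved at every step, and the only data a closed term of sort $H$ carries beyond a construct is the second component $Y$ of each function symbol $(X,Y)$, which is forced to be the support of the subtree rooted there (the root has $Y=H$, and recursively $Y=H_i$ for the $i$-th child). It is worth recording this normalization explicitly: in a closed term of sort $H$, every function symbol is of the form $(X,\supp)$ where $\supp$ is the union of decorations below, so the second components are redundant and the projection loses no information.

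\textbf{Main obstacle.} There is no deep obstacle; the only thing to be careful about is the bookkeeping that makes the two inductions line up — namely that restricting $\Sigma_\hyper{H}$ to the connected component $\hyper{H}_i$ yields exactly $\Sigma_{\hyper{H}_i}$ (so that "closed term of sort $H_i$ over $\Sigma_\hyper{H}$" coincides with "closed term over $\Sigma_{\hyper{H}_i}$"), and that the recursion on constructs in \cref{inductive-construct} passes to these same components. Both are immediate from the definitions (a connected subset of $H_i$ is a connected subset of $H$ contained in $H_i$, and $\hyper{H},X\leadsto H_1,\ldots,H_n$ is literally the arity/input-sort data of $(X,H)$), but they should be stated so the simultaneous induction is clean. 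Everything else — well-definedness of each step, and that $\Phi,\Psi$ are mutually inverse — follows by straightforward structural induction.
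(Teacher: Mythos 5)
Your proof is correct and follows essentially the same route as the paper: a structural induction matching the inductive definition of closed terms of sort $H$ against \cref{inductive-construct}, with the projection $(X,Y)\mapsto X$ losing no information because the second components are forced (as you note, they are the supports of the subtrees). Your explicit inverse and the signature-restriction bookkeeping merely spell out what the paper leaves as ``clear from the inductive nature of the definitions.''
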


\begin{proof}
  We compare the inductive definition of constructs (\cref{inductive-construct}) with the inductive definition of $\Ter(\Sigma_\hyper{H})$ above.
  First observe that there is only one arity $0$ function symbol of output sort $H$, namely  $(H,H)$. 
  We associate to this term the non-planar tree with only one node, decorated by $H$.
  Now, an arity $n$ function symbol of output sort $H$ is an ordered pair $(Y,H)$ with $Y \subseteq H$, with
  $\hyper{H},Y \leadsto H_1,\ldots,H_n$, and a valid closed term $(Y,H)(t_1,\ldots,t_n)$ is obtained from closed terms $t_i$ of  sort $H_i$, for $1 \leq i \leq n$. 
  We associate to the term $(Y,H)(t_1,\ldots,t_n)$ the non-planar tree $Y(T_1,\ldots,T_n)$ with root decorated by $Y$, and the non-planar trees~$T_i$, associated by induction to the various $t_i$'s, grafted on its leaves.
  It is clear from the inductive nature of the definitions that this correspondence is bijective.
\end{proof}

The correspondence between closed terms and constructs extends to all terms.

\begin{lemma} 
  \label{l:bijection-open}
  For every non-empty subset $X\inc H$,
 there is a bijection $\chi$ between the set of  terms $t$ of  sort $H$ over $\Sigma_\hyper{H}$  such that $\bigcup \var(t)=H\setminus X$ and the set of constructs of $\recrestr{\hyper{H}}{X}$, 
 defined by projecting all function symbols $(Z,Y)$ to their first component $Z$ and by pruning all variables.
\end{lemma}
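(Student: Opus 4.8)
The plan is to adapt the proof of \cref{l:bijection-terms} to the open setting, showing that the claimed map $\chi$ is well-defined, injective, and surjective by a simultaneous induction on the structure of terms and constructs. The key observation is that a term $t$ of sort $H$ with $\bigcup\var(t)=H\setminus X$ is either a variable (impossible here since the sort is $H$ and, when $X\neq\emptyset$, we need $H\setminus X\neq H$, so the top symbol must be a function symbol unless $X=H$, in which case $t=H$ is the unique such variable, corresponding to the unique construct of $\recrestr{\hyper{H}}{H}=\hyper{H}_\emptyset$, which is the empty hypergraph — a degenerate base case to treat separately or exclude), or of the form $(Y,H)(t_1,\ldots,t_n)$ with $\hyper{H},Y\leadsto H_1,\ldots,H_n$.

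First I would set up the induction. Given $t=(Y,H)(t_1,\ldots,t_n)$, each $t_i$ has sort $H_i$, and the variable sets partition: $\bigcup_i\bigl(\bigcup\var(t_i)\bigr)=H\setminus(X\cup Y)$ — more precisely, writing $X_i\eqdef H_i\cap X$ (some possibly empty), one has $\bigcup\var(t_i)=H_i\setminus X_i$, and $X=Y$ is impossible since $Y$ is a decoration hence disjoint from the $H_i$, so actually $X\inc\bigcup_i H_i$ and $X_i$ partitions $X$. The issue is that $t_i$ is a term over $\Sigma_{\hyper{H}}$ of sort $H_i$, not literally a term over $\Sigma_{\hyper{H}_i}$; but by \cref{subconstruct-restriction}-type reasoning the sub-signature of $\Sigma_{\hyper{H}}$ on sorts contained in $H_i$ is exactly $\Sigma_{\hyper{H}_i}$, so the induction hypothesis applies to each $t_i$ with the set $X_i\inc H_i$, yielding a construct $\chi(t_i)$ of $\recrestr{(\hyper{H}_i)}{X_i}$. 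I would then define $\chi(t)\eqdef Y(\chi(t_1),\ldots,\chi(t_n))$ and invoke \cref{partial-construct} (with the roles adjusted: here all components are present but each is replaced by a construct of its reconnected restriction) together with the identity $\recrestr{\hyper{H}}{X}=$ the hypergraph whose decoration-$Y$ instantiation has components $\recrestr{(\hyper{H}_i)}{X_i}$, to check that $Y(\chi(t_1),\ldots,\chi(t_n))$ is indeed a construct of $\recrestr{\hyper{H}}{X}$. Surjectivity and injectivity then follow because every construct of $\recrestr{\hyper{H}}{X}$ has, by \cref{inductive-construct}, a root decoration $Y$ which is the unique preimage of the top function symbol $(Y,H)$, and the subtrees are handled by induction; pruning variables and projecting function symbols are visibly mutually inverse to the reconstruction, exactly as in \cref{l:bijection-terms}.

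The main obstacle I anticipate is bookkeeping the interaction between the reconnected restriction $\recrestr{\hyper{H}}{X}$ and the decomposition induced by the top symbol: one must verify the combinatorial identity that if $\hyper{H},Y\leadsto H_1,\ldots,H_n$ and $X=\bigsqcup X_i$ with $X_i\inc H_i$, then the components of $(\recrestr{\hyper{H}}{X})\setminus Y$ (equivalently, the "slots" below the root $Y$ in a construct of $\recrestr{\hyper{H}}{X}$) are precisely $\recrestr{(\hyper{H}_i)}{X_i}$ for $i=1,\ldots,n$. This is essentially \cref{xyz-reconnected} and the definition of $\leadsto$ for ordered hypergraphs chased through the reconnected restriction, and it also guarantees the ordering of the $T_i$'s matches on both sides; I would isolate it as the one genuinely new computation, the rest being a transcription of the closed-case argument. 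A minor point to handle cleanly is the empty/degenerate case $X=H$ (and more generally ensuring $\recrestr{\hyper{H}}{X}$ is connected, or stating the bijection at the level of the possibly-disconnected hypergraph $\recrestr{\hyper{H}}{X}$ whose constructs are forests — consistent with $X(\ldots)$ in \cref{partial-construct} having several children landing in distinct components), which I would address by remarking that $\chi$ is most naturally phrased with $\recrestr{\hyper{H}}{X}$ read as in \cref{partial-construct}.
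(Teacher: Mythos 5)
Your overall plan (induction on the term, reduce to the children below the root, isolate a compatibility between $\leadsto$ and reconnected restriction, invoke \cref{partial-construct}) is the same as the paper's, but the bookkeeping at the root — which is the entire content of the lemma beyond the closed case — is wrong. Since $\chi$ prunes the variables and keeps the first components of the function symbols, and since these first components together with $\bigcup\var(t)$ partition $H$, the decorations of $\chi(t)$ partition $H\setminus\bigcup\var(t)=X$; in particular the root decoration satisfies $Y\inc X$, and $X=Y\sqcup\bigsqcup_i X_i$ with $X_i=H_i\cap X$. Your proposal instead asserts that $Y$ is disjoint from $X$ and that the $X_i$ alone partition $X$, which is inconsistent with your own equation $\bigcup_i\bigl(\bigcup\var(t_i)\bigr)=H\setminus(X\cup Y)$ and makes $Y(\chi(t_1),\ldots,\chi(t_n))$ not even a candidate construct of $\recrestr{\hyper{H}}{X}$ (its root would not be a subset of the vertex set $X$). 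The same confusion makes your ``one genuinely new computation'' false as stated: with $Y\cap X=\emptyset$, removing $Y$ from $\recrestr{\hyper{H}}{X}$ does nothing, and $\recrestr{\hyper{H}}{X}$ is always connected (it contains the hyperedge $H\cap X=X$), so its components cannot be the several $\recrestr{(\hyper{H}_i)}{X_i}$; the identity you need is the corrected one, with $Y\inc X$ and $(\recrestr{\hyper{H}}{X}),Y\leadsto\setc{\recrestr{(\hyper{H}_i)}{X_i}}{X_i\neq\emptyset}$. This also disposes of your worry about forests.

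Two further points. First, children with $X_i=\emptyset$ are exactly the bare variables $H_j$ and must be pruned, contributing no subtree at all; your definition $\chi(t)\eqdef Y(\chi(t_1),\ldots,\chi(t_n))$ keeps all $n$ children (``all components are present''), which both miscounts the decorations and requires the undefined $\recrestr{(\hyper{H}_i)}{\emptyset}$. The paper's proof writes $t=(Y,H)(\setc{t_i}{i\in I}\cup\setc{H_j}{j\notin I})$ and applies \cref{partial-construct} with the subset $I$ precisely to handle this. Second, your degenerate-case analysis is backwards: the variable $t=H$ has $\bigcup\var(t)=H$, which corresponds to $X=\emptyset$ and is excluded by hypothesis (this is the paper's one-line argument), whereas $X=H$ means $t$ is closed and $\recrestr{\hyper{H}}{H}=\Sat(\hyper{H})$, whose constructs are those of $\hyper{H}$ — this is just \cref{l:bijection-terms}, not ``the empty hypergraph'' and not a case needing separate treatment. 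With the partition corrected to $X=Y\sqcup\bigsqcup_i X_i$, the pruned variables handled via $I$, and the variable case dismissed as above, your induction does become the paper's proof.
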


\begin{proof}
The proof is a variation on the proof of \cref{l:bijection-terms}, and subsumes it (take $X=~H$).  
Let $t$ be a term  of  sort $H$. 
It cannot be a variable, as it would have to be $H$, contradicting the non-emptyness of $X$. 
Thus, ignoring the order of subterms, $t$ has the form $(Y,H)(\setc{t_i}{i\in I}\cup\setc{H_j}{j\not\in I})$, where $\hyper{H},Y \leadsto H_1,\ldots,H_n$ and $I\inc\set{1,\ldots,n}$, and we conclude by induction together with \cref{partial-construct}.
\end{proof}

We define a family of rewriting rules as follows.

\begin{definition} \label{def:rules}
  Let $\hyper{H}$ be a connected hypergraph. 
  Let $K$ be a connected subset of $H$, and let $X \subseteq K$ be such that $\mathbb{K},X \leadsto U_1,\ldots U_n$.
  Let $Y \subseteq U_i$ and suppose that $\mathbb{K}_i\setminus Y \leadsto V_1,\ldots, V_k$. 
  Then, we define the \defn{rewriting rule}
  $$(X,K)(U_1,\ldots, U_{i-1},(Y,U_i)(V_1,\ldots, V_k),U_{i+1},\ldots,U_n)$$
  $$ \longrightarrow (X\cup Y,K)(U_1,\ldots,U_{i-1},V_1,\ldots, V_k,U_{i+1},\ldots,U_n)$$
 if $\max(Y)<\min(X)$, and 
  $$(X\cup Y,K)(U_1,\ldots,U_{i-1},V_1,\ldots, V_k,U_{i+1},\ldots,U_n)$$
  $$\longrightarrow (X,K)(U_1,\ldots, U_{i-1},(Y,U_i)(V_1,\ldots, V_k),U_{i+1},\ldots,U_n)$$
  if $\max(X)<\min(Y)$.
  We denote this set of rules by $R_\hyper{H}$.
\end{definition} 
We note that in the above definition $(X,K), (Y,U_i),\ldots$ are function symbols, while $U_1,\ldots,V_1,\ldots$ are variables.
It is clear that these are well-defined rewriting rules: the term on the left is never a variable, and the variables on both sides are the same.
Their closed instantiations consist in replacing the variables by actual constructs, via the identification provided by~ \cref{l:bijection-terms}.

Recall the covering relation $\prec$ from~ \cref{subface-relation}.
\begin{lemma} 
  The closed instantiations in context of the rewriting rules $R_\hyper{H}$ admit the following description.
  Let $S,T$ be two constructs such that $S \prec T$.  
  Then we have
 $$\begin{array}{lll}
    S \to T &  \mathrm{if} & \max(Y)<\min(X), \\
    T \to S & \mathrm{if} & \max(X)<\min(Y).
  \end{array}$$
\end{lemma}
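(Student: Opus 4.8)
The plan is to unwind both sides of the claimed equivalence to the combinatorial definition of the covering relation $\prec$ and then match it term-by-term against the rewriting rules of \cref{def:rules}, using the closed-term/construct dictionary of \cref{l:bijection-terms}. First I would recall that $S \prec T$ means, by \cref{subface-relation}, that $T$ is obtained from $S$ by contracting an edge between a node $X'$ (the father) and a child $Y'$ of it, relabelling the merged node by $X' \cup Y'$; equivalently, there is a node of $S$ at which $\occ{S}{X'} = X'(Y'(S_1,\ldots,S_m),S_{m+1},\ldots,S_n)$ and $T$ replaces this subtree by $(X'\cup Y')(S_1,\ldots,S_n)$. So a covering pair is exactly a pair of constructs that differ locally in this way.

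Next I would translate this local picture into terms. Under the bijection of \cref{l:bijection-terms}, the subtree $\occ{S}{X'}$ corresponds to a subterm $(X',K)(\ldots,(Y',U_i)(V_1,\ldots,V_k),\ldots)$ where $K = \supp(\occ{S}{X'})$ is the connected support, $U_i$ is the component of $\hyper{H}_K \setminus X'$ containing $Y'$, and the $V_j$ are the components of $(\hyper{H}_K)_{U_i}\setminus Y'$; correspondingly $\occ{T}{X'\cup Y'}$ corresponds to $(X'\cup Y', K)(\ldots,V_1,\ldots,V_k,\ldots)$, with all other branches of $S$ and $T$ (encoded as a context $C[-]$) identical. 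This is precisely, up to renaming $X' \rightsquigarrow X$ and $Y' \rightsquigarrow Y$, the left- and right-hand sides of one of the two rules in \cref{def:rules}: if $\max(Y)<\min(X)$, the rule points $S\text{-shape}\to T\text{-shape}$, and if $\max(X)<\min(Y)$, the rule points $T\text{-shape}\to S\text{-shape}$. Filling the context $C[-]$ and substituting actual constructs for the variables $U_\bullet, V_\bullet$ then yields exactly a closed instantiation in context, giving $S\to T$ in the first case and $T\to S$ in the second, which is the claimed description; conversely, every closed instantiation in context of a rule in $R_\hyper{H}$ produces a pair of constructs related in exactly this local way, hence a covering pair, so the correspondence is onto.

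I would round this off by noting that the two cases $\max(Y)<\min(X)$ and $\max(X)<\min(Y)$ are the only ones possible, since $X$ and $Y$ are disjoint non-empty finite sets of vertices (decorations in a construct are pairwise disjoint), so exactly one of the two maxima is larger; this is what makes each covering pair carry a well-defined orientation. The main obstacle — really the only subtlety — is bookkeeping: one must check that the connected-component data lines up, i.e. that $K = \supp(\occ{S}{X})$ is connected (which holds by \cref{subconstruct-restriction}) and that the components $U_i$ and $V_j$ appearing in the rule are precisely the branches of $S$ and $T$ at the relevant nodes, so that the term $(X,K)(\ldots(Y,U_i)(\ldots)\ldots)$ is actually a well-formed subterm matching the left-hand side of a rule and its reduct matches the right-hand side. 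Once \cref{l:bijection-terms} is invoked to identify subterms with subconstructs and \cref{def:rules} is read off literally, no genuine computation remains; the proof is a direct comparison of the two inductive descriptions.
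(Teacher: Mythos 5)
Your main argument is correct and is essentially the paper's own proof, just written out in more detail: the paper likewise restricts to closed terms, invokes \cref{l:bijection-terms}, sets $K=\supp(\occ{S}{X})$ and identifies the relevant component with $\supp(\occ{S}{Y})$ in \cref{def:rules}, and fills the variables with constructs via a substitution; your explicit handling of the context $C[-]$, of the components $U_i,V_j$, and of the converse direction is exactly the intended bookkeeping.

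One caveat: your closing claim is false. Disjointness of $X$ and $Y$ does not imply that one of $\max(Y)<\min(X)$ or $\max(X)<\min(Y)$ holds, because these conditions require one set to lie entirely below the other, not merely that their maxima differ. For example, in the permutahedron $\hyper{P}_3$ the covering pair $\set{1,3}(\set{2,4})\prec\set{1,2,3,4}$ has $X=\set{1,3}$ and $Y=\set{2,4}$, and neither inequality holds, so no rewriting step relates these two faces. This does not damage your proof of the lemma, which only asserts the two implications together with the description of the closed instantiations in context; but the conclusion you draw --- that every covering pair carries a well-defined orientation --- is wrong and should be removed: the rewriting relation on constructs genuinely orients only some of the edges of the face poset, which is consistent with the fact that the induced preorder (the facial weak order) is not an orientation of the face lattice.
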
 

\begin{proof}
  Restricting our attention to closed terms, and using \cref{l:bijection-terms} gives the result. 
  Concretely, one just needs to set $K\eqdef \supp(\occ{S}{X})$ and $K_i\eqdef \supp(\occ{S}{Y})$ in \cref{def:rules}, and map each variable to a construct by some substitution $\sigma$.
\end{proof}

Behind the scene, the two clauses are not as symmetric as they seem to be. 
Procedurally speaking, in the first case, when moving from $S$ to $T$, there is nothing else to check than the condition $ \max(Y)<\min(X)$, while in the second case, when moving from $T$ to $S$, one has first to decide on a splitting of a node $Z$ of $T$ as some $X\cup Y$ in such a way that $Y$ is connected in $\supp(\occ{T}{Z})\setminus X$ and that the condition $\max(X)<\min(Y)$ holds.

\begin{rem}
  \label{rem:facialweak}
  This can be seen as the definition of a preorder on the set $\mathcal{A}(\hyper{H})$ of constructs of $\hyper{H}$, distinct from the face relation. 
  It can be easily checked that it coincides with the \emph{facial weak order} on the faces of permutahedra \cite{KrobLatapyNovelliPhanSchwer,PalaciosRonco,DermenjianHohlwegPilaud} and the \emph{generalized Tamari order}  on the faces of associahedra \cite{Ronco-Tamari}. 
  Indeed, via the identifications of the respective constructs with planar trees and surjections recalled in \cref{ss:hypergraph-polytopes}, one gets exactly the relations defined in \cite[Definition 30 and Lemma 17]{PalaciosRonco}.
  Is this preorder, for an arbitrary nestohedron, a partial order? 
  The answer is positive, and a simple proof of this fact will be available soon \cite{RSON}.
\end{rem}


\subsection{Rewriting on constructions}
\label{ss:rewriting-constructions}

Restricting our attention to constructions, it is natural to adapt the term rewriting system  $(\Sigma_\hyper{H},R_\hyper{H})$.

\begin{definition}
  For a connected hypergraph $\hyper{H}$, we consider the \defn{constructions signature}~$\Sigma_\hyper{H}^c$ defined by the following data:
 \begin{itemize}
    \item The variables are the connected subsets of $H$, that is $V\eqdef \{ X \subseteq H \ | \ \hyper{H}_{X} \text{ is connected}\}$. 
    \item The function symbols are ordered pairs $(x,Y)$ where $Y$ is a connected subset of $H$ and where $x\in Y$.
    $$F\eqdef \{(x,Y) \ | \ x \in Y \subseteq H, \ \hyper{H}_Y \text{ is connected}\}.$$
    \item The sorts are the connected subsets of $H$, i.e., 
    $S\eqdef \{ X \subseteq H \ | \ \hyper{H}_X \text{ is connected}\}$.
    \item For $(x,Y) \in F$, we define $\ari(x,Y)$ as the number of connected components of~$\hyper{H}_Y\setminus {x}$.
    \item Variables $X \in V$ are their own sort $\outsort(X)\eqdef X$, while function symbols~$(x,Y) \in F$ are of sort $\outsort(x,Y)\eqdef Y$.
    \item For function symbols $(x,Y) \in F$ such that $\hyper{H}_Y,x \leadsto Y_1,\ldots,Y_n$, and for $1 \leq i \leq n$, we define $\insort((x,Y),i)\eqdef Y_i$.
  \end{itemize}
\end{definition}

\begin{lemma} 
  \label{l:bijection-constructions}
  There is a bijection between the set of closed terms of  sort $H$ over $\Sigma_\hyper{H}^c$ and the set of constructions of $\hyper{H}$.
\end{lemma}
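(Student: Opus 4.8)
The plan is to mirror the proof of \cref{l:bijection-terms} essentially verbatim, since the constructions signature $\Sigma_\hyper{H}^c$ stands in the same relation to constructions as $\Sigma_\hyper{H}$ does to constructs: the only change is that the first component of each function symbol is now required to be a singleton $x\in Y$ rather than an arbitrary non-empty $X\subseteq Y$, which is exactly the restriction that defines constructions among constructs (all nodes decorated by singletons). So I would set up the bijection by induction on the structure of the closed term (equivalently, on $|H|$), projecting a function symbol $(x,Y)$ to the node decoration $x$.

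First I would unwind the base case: the only arity-$0$ function symbol of output sort $H$ is $(x,H)$ with $\set{x}=H$, i.e.\ $H$ is a singleton, and this corresponds to the one-node construction decorated by that singleton; conversely a one-node construction of $\hyper{H}$ forces $H$ to be a singleton. Then for the inductive step, a closed term of sort $H$ that is not of arity $0$ has the form $(x,H)(t_1,\dots,t_n)$ where $\hyper{H},\set{x}\leadsto H_1,\dots,H_n$ (using that $\ari(x,H)$ is the number of connected components of $\restrH{H}{\set{x}}$ and that $\insort((x,H),i)=H_i$), and each $t_i$ is a closed term of sort $H_i$ over $\Sigma_{\hyper{H}_i}^c$. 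Note here that for $(x,Y)\in F$ the subsignature relevant to the $i$-th argument really is $\Sigma_{\hyper{H}_i}^c$, because $\hyper{H}_i$ is connected and its function symbols/sorts/arities are computed from $\hyper{H}_i$ exactly as prescribed — this is the analogue of \cref{subconstruct-restriction} on the term side and deserves a one-line remark. By the induction hypothesis each $t_i$ corresponds to a unique construction $T_i$ of $\hyper{H}_i$, and I assign to $(x,H)(t_1,\dots,t_n)$ the construction $x(T_1,\dots,T_n)$ (a construction, since $\set{x}$ is a singleton and each $T_i$ is a construction), which by \cref{inductive-construct}(2) is a construction of $\hyper{H}$.

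For the reverse direction, given a construction of $\hyper{H}$ one reads off from \cref{inductive-construct} that it is either the one-node tree (handled by the base case) or of the form $x(T_1,\dots,T_n)$ with $\hyper{H},\set{x}\leadsto H_1,\dots,H_n$ and each $T_i$ a construction of $\hyper{H}_i$ — here one uses that in a construction every node, in particular the root, is a singleton, which is precisely what makes $(x,H)$ a legal function symbol of $\Sigma_\hyper{H}^c$ with the matching arity and input sorts. Applying the induction hypothesis to each $T_i$ produces the term, and the two assignments are mutually inverse because the inductive clauses match clause-by-clause and the function symbol $(x,Y)$ is uniquely recovered from its projection $x$ together with its output sort $Y$ (which is determined by the position of the node in the tree). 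I would close by remarking that this bijection is simply the restriction, along the inclusion of signatures $\Sigma_\hyper{H}^c\hookrightarrow\Sigma_\hyper{H}$ sending $(x,Y)\mapsto(\set{x},Y)$, of the bijection of \cref{l:bijection-terms} to the sub-poset of constructions.

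I do not expect any real obstacle here: everything is a structural induction that parallels a definition-unwinding already carried out once in the paper. The only point requiring a modicum of care is making sure the arity and input-sort data of $\Sigma_\hyper{H}^c$ (which are defined via $\hyper{H}_Y\setminus\set{x}\leadsto\dots$) are exactly what the inductive clause of \cref{inductive-construct} uses (namely $\hyper{H},Y\leadsto H_1,\dots,H_n$ with $Y=\set{x}$), and that the recursion descends into the correct connected-component subsignatures; both are immediate from the definitions, so the proof is short.
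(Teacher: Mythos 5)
Your proposal is correct and follows essentially the same route as the paper, whose proof of \cref{l:bijection-constructions} is exactly the structural induction of \cref{l:bijection-terms} transported to the singleton-labelled case; your extra care about the base case (arity-$0$ symbols $(x,H)$ forcing $H$ to be a singleton) and about the subterms living over $\Sigma_{\hyper{H}_i}^c$ is consistent with, and slightly more explicit than, what the paper leaves implicit.
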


\begin{proof}
  The proof is similar to that of \cref{l:bijection-terms}.
\end{proof}

The rewriting rules that we next define are obtained by joining together the rules in \cref{def:rules} (see Remark \ref{consruction-join-constructs}). 

\begin{definition} 
  \label{def:rules-2}
  Let $\hyper{H}$ be a connected hypergraph. 
  Let $K$ be a connected subset of $H$, and let $x,y \in K$ be such that
  $$\mathbb{K},\{x,y\} \leadsto U_1,\ldots,U_\ell,V_1,\ldots,V_m,W_1,\ldots,W_n,$$
  where $U_1,\ldots,U_\ell$ are the connected components of $\hyper{K} \setminus x$ which do not contain $y$, $W_1,\ldots,W_n$ are the connected components of $\hyper{K} \setminus y$ which do not contain $x$, and $V_1,\ldots,V_m$ are the remaining connected components. 
  Let $K_y$ (resp.\ $K_x$) denote the connected component of $\mathbb{K} \setminus x$ (resp. $\mathbb{K} \setminus y$) which contains $y$ (resp.\ $x$).
  Then we define the \defn{rewriting rule}
  $$(x,K)(U_1,\ldots, U_{\ell},(y,K_y)(V_1,\ldots,V_m,W_1,\ldots,W_n))$$
  $$ \longrightarrow (y,K)(W_1,\ldots,W_n,(x,K_x)(V_1,\ldots,V_m,U_1,\ldots,U_\ell))$$
  whenever $x < y$. 
  We denote this set of rules by $R_\hyper{H}^c$.
\end{definition} 

Once again, it is clear that these are well-defined rewriting rules: the term on the left is never a variable, and the variables on both sides are the same.
Their closed instantiations in context define rewriting steps on the set of constructions of a given hypergraph, i.e., on the vertices of a given nestohedron.  

We also note that the two rewriting systems $(\Sigma_\hyper{H},R_\hyper{H})$ and $(\Sigma_\hyper{H}^c,R_\hyper{H}^c)$ are linear.

\begin{rem} \label{consruction-join-constructs}
We can see that $(\Sigma_\hyper{H}^c,R_\hyper{H}^c)$ can be simulated in $(\Sigma_\hyper{H},R_\hyper{H})$, in the sense that any rewriting step in $R_\hyper{H}^c$ is obtained by concatenating two rewriting steps in $R_\hyper{H}$ :
 $$\begin{array}{l}(\set{x},K)(U_1,\ldots, U_{\ell},(\set{y},K_y)(V_1,\ldots,V_m,W_1,\ldots,W_n))\\
 \quad  \longrightarrow (\set{x,y},K)(U_1,\ldots,U_\ell,V_1,\ldots,V_m,W_1,\ldots,W_n)\\
 \quad
 \longrightarrow (\set{y},K)(W_1,\ldots,W_n,(\set{x},K_x)(V_1,\ldots,V_m,U_1,\ldots,U_\ell)).
 \end{array}$$
\end{rem}

\begin{lemma} 
  \label{l:instantiation-constructions}
  The closed instantiations in context of the rewriting rules $R_\hyper{H}^c$ admit the following description.
  Let $S,T$ be two distinct constructions which are both vertices of the same edge. 
  This means that there exists a node $x$ of $S$ such that $\occ{S}{x}=x(y(\cdots),\cdots)$, and that $T$ is obtained by replacing in $S$ the  subtree rooted at $x$ with $y(x(\cdots),\cdots)$. 
  Then we have
  $$\begin{array}{lll}
    S \to T &  \mathrm{if} & x < y. 
  \end{array}$$
\end{lemma}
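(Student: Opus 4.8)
The plan is to unwind the definitions on both sides and match them directly, just as was done for the analogous statement about $R_\hyper{H}$ in the lemma preceding \cref{rem:facialweak}. First I would recall that, by \cref{l:bijection-constructions}, closed terms of sort $H$ over $\Sigma_\hyper{H}^c$ are identified with constructions of $\hyper{H}$, so the statement is really about the closed instantiations in context of the single rewriting rule of \cref{def:rules-2}. An instantiation in context of that rule has the shape $C[\sigma(l)] \to C[\sigma(r)]$, where $l \to r$ is the rule attached to some connected $K$ and some $x < y$ in $K$, and $\sigma$ sends each of the variables $U_1,\ldots,U_\ell,V_1,\ldots,V_m,W_1,\ldots,W_n$ to an actual construction. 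Under the bijection of \cref{l:bijection-constructions}, $\sigma(l)$ becomes the construction whose subtree at its root $x$ is $x(y(\cdots),\cdots)$, with the branches below $y$ being the images of the $V$'s and $W$'s and the remaining branches below $x$ being the images of the $U$'s; and $\sigma(r)$ becomes the construction with subtree $y(x(\cdots),\cdots)$ obtained by the redistribution prescribed in \cref{def:rules-2}. Filling the context $C$ then grafts this local modification inside an arbitrary ambient construction, which is exactly the situation described in the statement: a node $x$ of $S$ with $\occ{S}{x} = x(y(\cdots),\cdots)$, and $T$ obtained by replacing that subtree with $y(x(\cdots),\cdots)$.

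The second step is to verify that the combinatorial redistribution of children in \cref{def:rules-2} coincides with the unique redistribution "dictated by connectivity" used when passing between the two constructions $S$ and $T$ sharing an edge. Here I would invoke \cref{subconstruct-restriction}: the subtree $\occ{S}{x}$ is a construct of $\hyper{H}_{\supp(\occ{S}{x})}$, call this $\hyper{K}$, so we are precisely in the setting of \cref{def:rules-2} with $K = \supp(\occ{S}{x})$. The connected components of $\hyper{K}\setminus x$ that do not contain $y$ (the $U_i$'s), those of $\hyper{K}\setminus y$ that do not contain $x$ (the $W_j$'s), and the components of $\hyper{K}\setminus\{x,y\}$ lying inside the component containing both (the $V_k$'s) are determined by $\hyper{K}$ alone, and the inductive definition of constructs (\cref{inductive-construct}) forces the grafting of the branches to follow exactly this partition on both sides. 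Thus $\sigma(l) = S$ and $\sigma(r) = T$ as constructions, and the rule fires under the hypothesis $x < y$, giving $S \to T$.

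I do not expect a genuine obstacle: the content is entirely bookkeeping, matching the inductive clauses of \cref{def:rules-2} against the inductive description of constructions via \cref{l:bijection-constructions} and \cref{subconstruct-restriction}. The only point requiring a little care is the bijection between variables of the term and branches of the construction — one must be sure that $\var(l) = \var(r)$ (already noted after \cref{def:rules-2}) so that the same substitution $\sigma$ acts on both sides, and that the context $C$ accounts for the part of $S$ above the node $x$; but both are immediate from the definitions. The argument is the construction analogue of \cref{consruction-join-constructs}, which already exhibits the same rewriting step as a concatenation of two $R_\hyper{H}$-steps, so one could alternatively deduce the statement from the corresponding lemma for $R_\hyper{H}$ together with \cref{consruction-join-constructs}; I would mention this as a remark but carry out the direct verification, since it is shorter.
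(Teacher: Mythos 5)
Your proposal is correct and follows essentially the same route as the paper's own proof: restrict to closed terms via \cref{l:bijection-constructions}, take $K=\supp(\occ{S}{x})$ (and the component $K_y$ containing $y$) in \cref{def:rules-2}, and let a substitution $\sigma$ fill the variables, with the context $C$ accounting for the part of $S$ above $x$. The extra bookkeeping you spell out (the connectivity-dictated redistribution via \cref{subconstruct-restriction}, and the alternative deduction through \cref{consruction-join-constructs}) is consistent with, and only elaborates on, the paper's one-line argument.
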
 

\begin{proof}
  Restricting our attention to closed terms and using \cref{l:bijection-constructions} gives the result. 
  Concretely, one just needs to set $K\eqdef \supp(\occ{S}{x})$ and $K_y\eqdef \supp(\occ{S}{y})$ in \cref{def:rules-2}, and map each variable to a construct by some substitution $\sigma$.
\end{proof}

We shall write $S \pre{x}{y} T$ to record that the  minimal  subtree of $S$ responsible for the reduction from $S$ to $T$ is $\occ{S}{x}$ and that the reduction concerns the son $y$ of $x$.
This defines a preorder $<$ on the set of consructions of an hypergraph $\hyper{H}$.
Is this preorder a partial order? 

It turns out that our definition of $<$ on constructions is equivalent to the definition of the \emph{flip relation} on maximal tubings of a graph associahedron given by Barnard and McConville in \cite{Barnard-McConville}, of which particular cases are considered in ~\cite{Forcey-Tamari}.
Their proof that the reflexive and transitive closure of the flip relation is a partial order extends readily to hypergraph polytopes.
Below, we recall and ``upgrade'' their proof, adapting it to our setting.

\begin{definition}
  \label{def:coordinate-vector}
  Given a construct $S$ of an ordered hypergraph $\hyper{H}$, its \defn{coordinate vector}
  $v^S=(\ldots,v^S_y,\ldots,v^S_x,\ldots) \in \mathbb{R}^{|H|}$, where the coordinates appear according to the increasing order of the elements of $H$, is defined by
  $$v^S_x\eqdef |\setc{e\in{\Sat}(\hyper{H})}{x\in e\inc \supp(\occ{S}{x})}|.$$
\end{definition}

\begin{proposition}[{\cite[Lem.~2.8]{Barnard-McConville}}] 
\label{p:flip-partial-order}
Let $\hyper{H}$ be an ordered connected hypergraph. 
The preorder generated by the flip relation $<$ defined above is a partial order, that is well-founded.
\end{proposition}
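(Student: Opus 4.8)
The plan is to show that the flip relation is contained in a strict partial order defined via the coordinate vectors of Definition~\ref{def:coordinate-vector}, which will immediately give well-foundedness (since the relevant partial order takes values in a well-founded set). Concretely, I would equip $\mathbb{R}^{|H|}$ with the lexicographic order associated to the increasing order on $H$ (so the coordinate indexed by the largest vertex is the most significant), and prove the key claim: if $S \pre{x}{y} T$ with $x<y$, then $v^S <_{\mathrm{lex}} v^T$. Granting this, any cycle $S_0 < S_1 < \cdots < S_k = S_0$ in the reflexive-transitive closure would force $v^{S_0} <_{\mathrm{lex}} v^{S_0}$, a contradiction; and since all coordinates $v^S_x$ are bounded nonnegative integers (bounded by $|\Sat(\hyper{H})|$), the lexicographic order restricted to the finite set of coordinate vectors of constructions is a well-order, hence the flip order is well-founded.

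The heart of the argument is therefore the comparison of $v^S$ and $v^T$ when $T$ is obtained from $S$ by the local move $\occ{S}{x} = x(y(\cdots),\cdots) \rightsquigarrow y(x(\cdots),\cdots)$ with $x<y$. First I would observe that the move only affects the subtrees $\occ{S}{x}$ and $\occ{T}{y}$, which have the \emph{same} support, say $K \eqdef \supp(\occ{S}{x}) = \supp(\occ{T}{y})$; and for every node $z$ of $S$ outside this region, $\supp(\occ{S}{z}) = \supp(\occ{T}{z})$, so $v^S_z = v^T_z$. It remains to compare the coordinates indexed by the vertices living inside $K$. Using the notation of Definition~\ref{def:rules-2} (with $K_y$ the component of $\hyper{K}\setminus x$ containing $y$, and $K_x$ the component of $\hyper{K}\setminus y$ containing $x$), one checks: for $z \in K$ with $z \neq x,y$, the subtree rooted at $z$ is unchanged up to the identity of its support, so $v^S_z = v^T_z$; for $z = y$, we have $\supp(\occ{S}{y}) = K_y \subsetneq K = \supp(\occ{T}{y})$, so $v^S_y < v^T_y$ (strictly, since $K$ itself belongs to $\Sat(\hyper{K}) \subseteq \Sat(\hyper{H})$ and contains $y$, contributing to $v^T_y$ but not to $v^S_y$ — here I use that $\hyper{K}$ is connected so $K \in \Sat(\hyper{H})$); and for $z = x$, symmetrically $\supp(\occ{T}{x}) = K_x \subsetneq K = \supp(\occ{S}{x})$, so $v^S_x > v^T_x$. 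Since $x < y$, the coordinate $v_y$ is lexicographically more significant than $v_x$, and $v^S_y < v^T_y$ while all coordinates more significant than $v_y$ agree; hence $v^S <_{\mathrm{lex}} v^T$.

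The main obstacle I anticipate is verifying cleanly that $v^S_z = v^T_z$ for the vertices $z \in K \setminus \{x,y\}$, i.e.\ that the contributions $|\{e \in \Sat(\hyper{H}) : z \in e \subseteq \supp(\occ{S}{z})\}|$ are genuinely unchanged. This requires tracking the supports of the subtrees rooted at vertices inside the $U_i$, $V_j$, $W_k$ components: a vertex $z$ inside some $W_k$ (a component of $\hyper{K}\setminus y$ not containing $x$) roots the same subtree in $S$ and in $T$ — it just gets re-grafted from under $x$-then-$y$ to directly under $y$ — so its support is literally identical. The $V_j$ components (those split off by \emph{both} $x$ and $y$) similarly root identical subtrees. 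The $U_i$ components move from under $x$ to under $x$-then-$y$, but again the subtree rooted at any internal vertex is unchanged. The only supports that genuinely change are those at the two nodes $x$ and $y$ where the contraction/expansion happens. Pinning this down will mostly be bookkeeping with Remark~\ref{subconstruct-restriction} and the explicit form of the rule in Definition~\ref{def:rules-2}; I would state it as a short lemma (``the flip move preserves the support of every subtree except those rooted at the two affected nodes'') before doing the coordinate comparison, since it is also the crux of Barnard--McConville's original argument and the place where the ``upgrade'' to hypergraphs needs the observation that connected subsets are exactly the elements of $\Sat(\hyper{H})$.
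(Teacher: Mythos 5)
Your proof is correct, and the combinatorial core is the same as the paper's: both arguments rest on tracking how the coordinate vector $v^{\bullet}$ of \cref{def:coordinate-vector} changes under a single flip, namely that all coordinates outside $\{x,y\}$ are preserved (your ``support bookkeeping'' lemma) and that $v_y$ strictly increases while $v_x$ strictly decreases, the strictness coming exactly as you say from $K=\supp(\occ{S}{x})$ being connected, hence an element of $\Sat(\hyper{H})$ contained in $K$ but not in $K_y$. Where you diverge is the final monotonicity device: the paper proves the sharper quantitative fact that the two changes have equal magnitude, $v^S_x-v^T_x=\lambda=v^T_y-v^S_y$ with $\lambda=|\setc{e\in\Sat(\hyper{H})}{\set{x,y}\inc e\inc K}|>0$, and then applies a linear functional $\langle -,\mu\rangle$ for any strictly decreasing $\mu$, whereas you order the vectors lexicographically with the largest vertex most significant, which needs only the sign of the change at $y$ and the invariance of the coordinates above $y$. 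Your route is slightly more economical (no cancellation claim to verify), and it is certainly sufficient for the proposition; what the paper's computation buys in addition is \cref{Tamari-orientation-vector}: the exact $\pm\lambda$ form of $v^S-v^T$ is what shows that every strictly decreasing $\mu$ is non-perpendicular to every edge of the Postnikov realization, i.e.\ an orientation vector, so if one adopted your lexicographic argument one would still need the paper's computation to recover that corollary.
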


\begin{proof}
Let $S,T$ be as in \cref{l:instantiation-constructions}. 
We set $K\eqdef \supp(\occ{S}{x})=\supp(\occ{T}{y})$, $I\eqdef \supp(\occ{S}{y})$, and $J\eqdef \supp(\occ{T}{x})$.
Let us examine $v^S$ and $v^T$. 
One sees easily that they have the same coordinates in all positions other than $x$ and $y$. 
We have, by definition
$$ v^S_x  = |\setc{e\in{\Sat}(\hyper{H})}{x\in e\inc K}| \quad \text{and} \quad v^T_x  = |\setc{e\in{\Sat}(\hyper{H})}{x\in e\inc J}|, $$
$$ v^S_y  = |\setc{e\in{\Sat}(\hyper{H})}{y\in e\inc I}| \quad \text{and} \quad v^T_y  = |\setc{e\in{\Sat}(\hyper{H})}{y\in e\inc K}|.$$
We next claim that the following equality holds:
$$v^S_x  - v^T_x  = \lambda = 
v^T_y - v^S_y \quad\textrm{where}\: \lambda\eqdef  |\setc{e\in{\Sat}(\hyper{H})}{\set{x,y}\inc e\inc K}|.$$
We just prove the inclusion of sets
$$\left(\setc{e\in{\Sat}(\hyper{H})}{x\in e\inc K}\setminus\setc{e\in{\Sat}(\hyper{H})}{x\in e\inc J}\right)
\inc\setc{e\in{\Sat}(\hyper{H})}{\set{x,y}\inc e\inc K}.$$
Suppose that $e$ is connected, $x\in e$, $e\inc K$ and $e\not\inc J$, and $y\not\in e$. Then $y$ has to lie entirely inside one of the connected components of $\restrH{K}{y}$, which has to be $J$ since $x\in e$, contradicting~$e\not\inc J$.
We thus have
$v^S - v^T = (0,\ldots,-\lambda,\ldots,\lambda,\ldots)$, where $\lambda$ is a positive integer.
Consider now an arbitrary vector $\mu=(\ldots,\mu_y,\ldots,\mu_x,\ldots)$ such that $\mu_{\bullet}:H\rightarrow\mathbb{R}$ is strictly decreasing, and consider the linear functional $\overline{\mu}\eqdef \langle -,\mu\rangle$. 
Then we have 
$\overline{\mu}(v^S)- \overline{\mu}(v^T)=\lambda(\mu_x-\mu_y)<0$.  The well-foundedness of $\to$ then follows from the finiteness of the set of constructions. In turn, well-foundedness 
prevents to create cycles when composing flips, concluding the proof.
\end{proof}

\begin{thm}
  \label{thm:termination}
  The rewriting system $(\Sigma_\hyper{H}^c,R_\hyper{H}^c)$ is terminating.
\end{thm}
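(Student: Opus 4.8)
The plan is to deduce termination of $(\Sigma_\hyper{H}^c, R_\hyper{H}^c)$ from the work already done in \cref{p:flip-partial-order}, using \cref{closed-termination} to reduce to the closed case. First I would check the hypothesis of \cref{closed-termination}: every sort is a connected subset $X\inc H$, and for such $X$ there is indeed a closed term of that sort over $\Sigma_\hyper{H}^c$ — for instance the term corresponding, via \cref{l:bijection-constructions} applied to $\hyper{H}_X$, to any construction of the connected hypergraph $\hyper{H}_X$ (such a construction exists by picking vertices one at a time). So it suffices to prove that every reduction sequence starting from a closed term terminates.

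Next, by \cref{l:bijection-constructions}, closed terms of sort $H$ over $\Sigma_\hyper{H}^c$ are in bijection with constructions of $\hyper{H}$, and by \cref{l:instantiation-constructions} the rewriting steps on such closed terms are exactly the flip relation $S \pre{x}{y} T$ with $x<y$ studied in \cref{p:flip-partial-order}. But a closed term of sort $H$ is not the most general closed term; a general closed term has sort some connected $K\inc H$, corresponding (again by \cref{l:bijection-constructions} applied to $\hyper{H}_K$) to a construction of $\hyper{H}_K$. Since $\hyper{H}_K$ is itself an ordered connected hypergraph, \cref{p:flip-partial-order} applies verbatim to it: the flip relation on constructions of $\hyper{H}_K$ is well-founded. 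Hence every reduction sequence of closed terms of sort $K$ terminates, for every connected $K$, and \cref{closed-termination} gives termination of $(\Sigma_\hyper{H}^c, R_\hyper{H}^c)$ in full generality.

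Concretely, the argument of \cref{p:flip-partial-order} exhibits, for any strictly decreasing $\mu_\bullet : H \to \mathbb{R}$, a linear functional $\overline{\mu}$ that strictly decreases along every flip $S \to T$ (we have $\overline{\mu}(v^S) - \overline{\mu}(v^T) = \lambda(\mu_x - \mu_y) > 0$ since $\lambda>0$ and $x<y$ forces $\mu_x < \mu_y$; note the sign relative to the displayed inequality there, where the roles of the labels are as in $S \pre{x}{y} T$). Thus $\overline{\mu}(v^{(-)})$ is a strictly decreasing $\mathbb{R}$-valued measure on any reduction sequence; since the set of constructions of $\hyper{H}_K$ is finite, no infinite reduction sequence can exist. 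This is all that \cref{closed-termination} needs.

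I do not expect a genuine obstacle here: the theorem is essentially a repackaging of \cref{p:flip-partial-order} through \cref{l:bijection-constructions}, \cref{l:instantiation-constructions}, and \cref{closed-termination}. The one point requiring a little care is that \cref{closed-termination} demands control of \emph{all} closed terms, not just those of sort $H$, so one must observe that \cref{p:flip-partial-order} applies to the restricted hypergraph $\hyper{H}_K$ for each connected $K$ — which it does, since nothing in that proof used that the ambient hypergraph was $\hyper{H}$ rather than a connected plain restriction of it. With that observation in place the proof is a short assembly of cited results.
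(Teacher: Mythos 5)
Your proof is correct and follows essentially the same route as the paper's: reduce to closed terms via \cref{closed-termination}, identify closed-term rewriting with the flip relation via \cref{l:bijection-constructions} and \cref{l:instantiation-constructions}, and conclude by the well-foundedness established in \cref{p:flip-partial-order}. Your explicit observation that a general closed term has some connected sort $K$ and hence requires applying \cref{p:flip-partial-order} to the restriction $\hyper{H}_K$ is a point the paper leaves implicit, but it is the same argument.
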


\begin{proof} 
  We observe that there is a closed term for every sort $X$: just take of 0-ary function symbol $(X,X)$ (cf.~ \cref{open-to-closed}). 
  Therefore, by~ \cref{closed-termination}, it suffices to show that termination holds for closed terms. 
  This follows immediately from \cref{l:instantiation-constructions,p:flip-partial-order}.
\end{proof}

But there is more to it. 
It turns out that the map $v^{\bullet}$ from constructions to $\mathbb{R}^{|H|}$
in \cref{def:coordinate-vector} has a geometric significance.  
Let $\Delta^{n-1}$ be the standard $(n-1)$-dimensional \defn{standard simplex} in $\mathbb{R}^n$, the convex hull of the basis vectors $e_1,\ldots,e_n$. 
Each non-empty subset $I \subseteq \set{1,\ldots,n}$ determines a face $\Delta^I$ of $\Delta^{n-1}$, the convex hull of $\setc{e_i}{i\in I}$. 

\begin{definition}
  Let $\hyper{H}$ be a connected hypergraph such that $H=\set{1,\ldots,n}$. 
  The \defn{Postnikov realization} of the hypergraph polytope associated to $\hyper{H}$ is the Minkowski sum
  $$P_{\hyper{H}}\eqdef \sum_{E\in{\Sat}(\hyper{H})} \Delta^E.$$
\end{definition}

\begin{proposition}[{\cite[Prop.~7.9]{P09}}] 
  \label{Postnikov-correspondence}
  Let $\hyper{H}$ be an ordered connected hypergraph.
  The map $v^{\bullet}$ is a bijection from the set of constructions of $\hyper{H}$ to the set of vertices of $P_{\hyper{H}}$.
\end{proposition}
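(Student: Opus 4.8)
The plan is to prove that $v^{\bullet}$ is a bijection from constructions of $\hyper{H}$ to vertices of the Minkowski sum $P_{\hyper{H}}$ by checking that $v^{S}$ is always a vertex, that the map is injective, and that it is surjective --- the bijectivity then follows. Since $P_{\hyper{H}}$ is a Minkowski sum of simplices, I would use the standard description of its vertices: a point $v\in P_{\hyper{H}}$ is a vertex if and only if there is a linear functional $\overline{\mu}=\langle -,\mu\rangle$ uniquely maximized at $v$, and since the sum is a Minkowski sum, $\overline{\mu}$ is maximized on $P_{\hyper{H}}$ at $\sum_{E}p_E$ where each $p_E$ maximizes $\overline{\mu}$ on $\Delta^{E}$; the maximum is a vertex precisely when each $p_E$ is a vertex of $\Delta^{E}$, i.e.\ some $e_{i_E}$ with $i_E\in E$. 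Choosing $\mu$ with $\mu_{\bullet}:H\to\mathbb{R}$ strictly decreasing (a \emph{generic} direction, as in the proof of \cref{p:flip-partial-order}) forces, for each $E\in\Sat(\hyper{H})$, the unique maximizer on $\Delta^{E}$ to be $e_{m(E)}$ where $m(E)=\min E$ (the smallest element, since smaller index means larger $\mu$-weight). Hence the vertices of $P_{\hyper{H}}$ obtained from generic strictly-decreasing directions are exactly the points $\sum_{E\in\Sat(\hyper{H})}e_{\min E}$, and one checks that \emph{all} vertices arise this way by a perturbation argument (every vertex is the maximizer of some functional, and one can perturb it to a strictly decreasing generic one without leaving the vertex, as vertices are isolated maximizers).

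The core combinatorial step is then to identify, for a construction $S$, the coordinate vector $v^{S}$ with such a sum. By \cref{def:coordinate-vector}, $v^{S}_{x}=|\setc{e\in\Sat(\hyper{H})}{x\in e\inc \supp(\occ{S}{x})}|$, so that $v^{S}=\sum_{E\in\Sat(\hyper{H})}e_{\chi(E)}$ where, for each connected subset $E$, $\chi(E)$ is the unique $x\in E$ with $E\inc\supp(\occ{S}{x})$ --- I must show this $x$ exists and is unique. Existence and uniqueness: given $E$ connected, consider the nodes of $S$ (identified with singletons $\{x\}$, $x\in H$) whose supports $\supp(\occ{S}{x})$ contain $E$; these form a chain under the ancestor relation in the tree $S$ because supports of nodes along a root-to-leaf path are nested and supports of incomparable nodes are disjoint (so at most one incomparable branch can contain the connected set $E$); the deepest node $x$ in this chain lies in $E$, because $E$ is connected and contained in $\supp(\occ{S}{x})$ but, by minimality, $E$ is not contained in the support of any child of $x$, which by \cref{subconstruct-restriction} forces $E$ to meet the removed vertex $x$, i.e.\ $x\in E$ --- and then $E\subseteq\supp(\occ{S}{x})$ gives the claim; uniqueness is immediate since any such node must be this deepest one.

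Finally I need the recipe $S\mapsto v^{S}$ to match the vertex recipe under the generic direction: I would verify that for each $E$, this combinatorially-defined $\chi(E)$ equals the minimizer selected by a strictly decreasing $\mu$, \emph{after} choosing the order on $H$ appropriately --- but actually the cleaner route, and the one I would take, is to run the argument intrinsically: fix any strictly decreasing $\mu$, show $v^{\bullet}$ is injective because two distinct constructions are related by a sequence of flips (this is exactly the content behind \cref{p:flip-partial-order}, whose proof shows each flip strictly changes $\overline{\mu}$), hence have distinct images; show $v^{S}$ is a vertex for every $S$ by exhibiting, for each $S$, a strictly decreasing $\mu^{S}$ (a reordering of coordinates) for which $v^{S}$ is the unique maximizer of $\overline{\mu^{S}}$ on $P_{\hyper{H}}$, using the per-simplex decomposition above together with the fact that $\chi(E)=\min_{\mu^{S}}E$ for a suitable order; and conclude surjectivity by a counting/degree argument --- both sets are finite, the map lands in the vertex set, it is injective, and the number of constructions equals the number of vertices of the face lattice by definition of hypergraph polytope (constructions are the $0$-dimensional faces). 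The main obstacle I anticipate is the bookkeeping in the per-simplex decomposition: showing cleanly that the maximizer of $\overline{\mu}$ over the Minkowski sum is the sum of per-summand maximizers and that genericity of $\mu$ makes each of these a unique vertex of $\Delta^{E}$, and then pinning down that the resulting vertex of $P_{\hyper{H}}$ is precisely $v^{S}$ rather than merely some vertex --- i.e.\ matching the ``deepest node in $E$'' combinatorics with the ``$\min$ under $\mu$'' combinatorics. I expect this to reduce, as in \cref{p:flip-partial-order}, to the observation that along any root-to-leaf path the supports decrease, so the $\mu$-largest element of $E$ contained in a support is governed by how deep one can descend while keeping $E$ inside, which is exactly the deepest-node description.
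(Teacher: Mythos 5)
The paper does not actually prove this statement---it is imported wholesale from Postnikov \cite[Prop.~7.9]{P09}---so your proposal has to stand on its own. Its combinatorial core is correct and is indeed the heart of the matter: for a construction $S$ and each $E\in\Sat(\hyper{H})$ there is a unique $x_E\in E$ with $E\inc\supp(\occ{S}{x_E})$ (your ``deepest node whose support contains $E$'' argument, using that supports of nodes of $S$ form a laminar family and that a connected $E$ avoiding $x$ must sit inside one component of $\restrH{\supp(\occ{S}{x})}{x}$), so that $v^S=\sum_{E\in\Sat(\hyper{H})}e_{x_E}$; and your plan for showing $v^S$ is a vertex is sound, since every element of $E$ is a descendant of $x_E$ in $S$, so any injective $\mu^S$ decreasing along a linear extension of the tree order of $S$ (root largest) picks $e_{x_E}$ as the unique maximizer on each summand $\Delta^E$, making $v^S$ the unique maximizer of $\overline{\mu^S}$ on $P_{\hyper{H}}$. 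One caveat already in your first paragraph: it is false that every vertex of $P_{\hyper{H}}$ is the maximizer of a functional that is strictly decreasing \emph{with respect to the fixed order on $H$} (no perturbation argument can move a generic functional into the decreasing chamber without leaving the vertex's normal cone); what is true, and what your later steps actually use, is that every vertex is the unique maximizer of \emph{some} injective $\mu$, i.e.\ of a functional decreasing after a reordering.

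The genuine gaps are in injectivity and surjectivity. For injectivity, the flip argument does not work as stated: you neither prove that any two constructions are connected by flips (that is $1$-skeleton connectivity, i.e.\ already part of the realization statement you are trying to establish), nor would connectivity suffice, since a flip path between $S$ and $T$ need not be monotone for a fixed $\mu$, so ``each flip strictly changes $\overline{\mu}$'' does not separate $v^S$ from $v^T$. A repair inside your own framework: if $v^S=v^T$, then $\overline{\mu^S}(v^T)=\sum_E\mu^S_{x_E(T)}\le\sum_E\max_{x\in E}\mu^S_x=\overline{\mu^S}(v^S)$, and equality forces $x_E(T)=x_E(S)$ for all $E$ by injectivity of $\mu^S$; since $S$ is recovered from its selection (its root is $x_H(S)$, then recurse on components), $S=T$. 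For surjectivity, the counting argument is circular: the equality between the number of vertices of $P_{\hyper{H}}$ and the number of constructions is \emph{not} ``by definition of hypergraph polytope''---nothing in the paper or in your argument shows that the Minkowski sum $P_{\hyper{H}}$ has face lattice isomorphic to the construct poset; that is precisely (a strengthening of) the theorem being proved. The direct fix is short: given a vertex $v$, choose an injective $\mu$ uniquely maximized at $v$, so $v=\sum_E e_{m_\mu(E)}$ with $m_\mu(E)$ the $\mu$-largest element of $E$; define a construction $S_\mu$ greedily (root $=$ the $\mu$-largest element of $H$, then recurse on the connected components of the complement) and check by induction that its selection is exactly $m_\mu$, whence $v=v^{S_\mu}$. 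With these two repairs your outline becomes a complete, self-contained proof of the cited proposition.
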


Constructions correspond to \emph{maximal nested sets} in the terminology of \cite{P09}.

Recall that an \defn{orientation vector} of a polytope $P\subset \R^n$ is a vector $\mu \in \R^n$ which is not perpendicular to any edge of $P$. 

\begin{corollary} 
  \label{Tamari-orientation-vector}
  Any vector $\mu$ with strictly decreasing coordinates is an orientation vector for $P_{\hyper{H}}$.
\end{corollary}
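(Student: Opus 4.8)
The plan is to deduce this directly from the explicit computation already carried out in the proof of \cref{p:flip-partial-order}, combined with the bijection of \cref{Postnikov-correspondence}. The key point is that an orientation vector must avoid being perpendicular to \emph{any} edge of $P_{\hyper{H}}$, so I need a description of the edge directions of $P_{\hyper{H}}$. Since every edge of $P_{\hyper{H}}$ joins two vertices $v^S$ and $v^T$ with $S, T$ constructions that are the endpoints of an edge of the nestohedron (here I use that $v^{\bullet}$ is a bijection from constructions to vertices of $P_{\hyper{H}}$, so that edges of $P_{\hyper{H}}$ correspond to edges of the hypergraph polytope, i.e.\ to pairs $S,T$ as in \cref{l:instantiation-constructions}), it suffices to understand the vectors $v^S - v^T$ for such pairs.

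First I would recall from the proof of \cref{p:flip-partial-order} that for any edge with endpoints $S \pre{x}{y} T$ one has
$$v^S - v^T = (0,\ldots,0,-\lambda,0,\ldots,0,\lambda,0,\ldots,0),$$
where the entry $-\lambda$ sits in position $y$, the entry $\lambda$ sits in position $x$, and $\lambda = |\setc{e\in{\Sat}(\hyper{H})}{\set{x,y}\inc e\inc K}|$ is a \emph{strictly positive} integer (it is positive because $K=\supp(\occ{S}{x})$ itself, being the support of a subtree of a construction, is connected, hence $K \in \Sat(\hyper{H})$ and contains both $x$ and $y$). So every edge direction of $P_{\hyper{H}}$ is, up to the positive scalar $\lambda$, of the form $e_x - e_y$ for some pair $x \neq y$ in $H$.

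Next I would compute the pairing: for $\mu$ with strictly decreasing coordinates and $\overline{\mu} \eqdef \langle -, \mu\rangle$, one has $\overline{\mu}(v^S - v^T) = \lambda(\mu_x - \mu_y)$. If $x < y$ then $\mu_x > \mu_y$ (strict decrease), so this quantity is $\neq 0$; if $x > y$ it is again $\neq 0$ for the same reason. In all cases $\mu$ is not perpendicular to the edge. Since this covers every edge of $P_{\hyper{H}}$, the vector $\mu$ is an orientation vector by definition. I would phrase the write-up as a short paragraph invoking \cref{Postnikov-correspondence} to translate edges of $P_{\hyper{H}}$ into flip pairs, then quoting the two displayed identities from the proof of \cref{p:flip-partial-order}.

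The only mild obstacle is making the translation "edge of $P_{\hyper{H}}$ $\leftrightarrow$ flip pair $S,T$" fully rigorous: a priori \cref{Postnikov-correspondence} only asserts a bijection on vertices, not that it is a combinatorial isomorphism of polytopes. However, $P_{\hyper{H}}$ is by construction (via \cite{P09}) a realization of the hypergraph polytope whose face lattice is that of constructs, so its edges are exactly the $1$-dimensional faces, which correspond to the $1$-dimensional constructs, whose two vertices are the two constructions refining them — precisely the pairs of \cref{l:instantiation-constructions}. If one prefers to avoid even this much, one can instead argue more crudely: any edge direction of $P_{\hyper{H}} = \sum_{E} \Delta^E$ is a sum of edge directions of the summands $\Delta^E$ (edges of a Minkowski sum are parallel to sums of edges/vertices of the summands), hence a nonnegative combination of vectors $e_i - e_j$; one then checks such a combination is never orthogonal to a strictly monotone $\mu$ unless it is zero. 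Either route closes the proof in a few lines.
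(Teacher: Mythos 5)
Your proposal is correct and follows essentially the same route as the paper, whose proof is precisely ``read the computation of $v^S - v^T$ in the proof of \cref{p:flip-partial-order} in light of \cref{Postnikov-correspondence}''; your extra care about upgrading the vertex bijection to an edge correspondence (and the alternative Minkowski-sum remark) only spells out what the paper leaves implicit.
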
 

\begin{proof}
The statement follows immediately from reading the proof of \cref{p:flip-partial-order} in the light of \cref{Postnikov-correspondence}.
\end{proof}

Our presentation is anachronical, since the vectors $v^S$ were preexisting to their use by Barnard and McConville.
But it stresses the fact that the proof of termination in~ \cref{p:flip-partial-order} is purely combinatorial and does not rely on the existence of a geometric realization.

%


\subsection{Critical pairs and confluence}
\label{ss:critical}

We next examine the orientation induced on the $X$-faces of $\hyper{H}$, for some $X=\set{x_1,x_2,x_3}\inc H$. 
Depending on the total order chosen on $H$, each of the four shapes of type (B) from~\cref{ss:typeB} gives rise to 6 possible local confluence diagrams. 
We list them below in schematic form (i.e., without the $\cdots$) for the quadrilateral shape (B2).
We overline (resp.\ underline) the minimum (resp.\ maximum) in the partial order.
$$\begin{array}{ccc}
\boxed{x_1>x_2>x_3} && \boxed{x_1>x_3>x_2}\\
&&\\
\xymatrix @-1.65pc {&& \overline{x_1(x_2,x_3)} \ar @{->}[ddll]_{\set{x_1,x_2}(x_3)} \ar @{->}[ddrr]^{\set{x_1,x_3}(x_2)}&& \\
&&&&\\
x_2(x_1(x_3)) \ar @{->}[ddrr]_{x_2(\set{x_1,x_3})}&& \set{x_1,x_2,x_3}  && \underline{x_3(x_1,x_2)}\ar @{<-}[ddll]^{\set{x_2,x_3}(x_1)}\\
&&&&\\
&&  x_2(x_3(x_1))&&}
&&

\xymatrix @-1.65pc {&& \overline{x_1(x_2,x_3)} \ar @{->}[ddll]_{\set{x_1,x_2}(x_3)} \ar @{->}[ddrr]^{\set{x_1,x_3}(x_2)}&& \\
&&&&\\
x_2(x_1(x_3)) \ar @{->}[ddrr]_{x_2(\set{x_1,x_3})}&& \set{x_1,x_2,x_3}  && x_3(x_1,x_2)\ar @{->}[ddll]^{\set{x_2,x_3}(x_1)}\\
&&&&\\
&&  \underline{x_2(x_3(x_1))}&&}
\end{array}
$$

\medskip
$$\begin{array}{ccc}
\boxed{x_2>x_1>x_3} && \boxed{x_2>x_3>x_1}\\
&&\\
\xymatrix @-1.65pc {&&x_1(x_2,x_3) \ar @{<-}[ddll]_{\set{x_1,x_2}(x_3)} \ar @{->}[ddrr]^{\set{x_1,x_3}(x_2)}&& \\
&&&&\\
\overline{x_2(x_1(x_3))} \ar @{->}[ddrr]_{x_2(\set{x_1,x_3})}&& \set{x_1,x_2,x_3}  && \underline{x_3(x_1,x_2)}\ar @{<-}[ddll]^{\set{x_2,x_3}(x_1)}\\
&&&&\\
&& x_2(x_3(x_1))&&}
&&

\xymatrix @-1.65pc{&&  \underline{x_1(x_2,x_3)} \ar @{<-}[ddll]_{\set{x_1,x_2}(x_3)} \ar @{<-}[ddrr]^{\set{x_1,x_3}(x_2)}&& \\
&&&&\\
x_2(x_1(x_3)) \ar @{<-}[ddrr]_{x_2(\set{x_1,x_3})}&& \set{x_1,x_2,x_3}  && x_3(x_1,x_2)\ar @{<-}[ddll]^{\set{x_2,x_3}(x_1)}\\
&&&&\\
&&   \overline{x_2(x_3(x_1))}&&}
 \end{array}
$$

$$\begin{array}{ccc}
\boxed{x_3>x_1>x_2} && \boxed{x_3>x_2>x_1}\\
&&\\
\xymatrix @-1.65pc {&& x_1(x_2,x_3) \ar @{->}[ddll]_{\set{x_1,x_2}(x_3)} \ar @{<-}[ddrr]^{\set{x_1,x_3}(x_2)}&& \\
&&&&\\
\underline{x_2(x_1(x_3))} \ar @{<-}[ddrr]_{x_2(\set{x_1,x_3})}&& \set{x_1,x_2,x_3}  && \overline{x_3(x_1,x_2)}\ar @{->}[ddll]^{\set{x_2,x_3}(x_1)}\\
&&&&\\
&&  x_2(x_3(x_1))&&}
&&

\xymatrix @-1.65pc {&& \underline{x_1(x_2,x_3)} \ar @{<-}[ddll]_{\set{x_1,x_2}(x_3)} \ar @{<-}[ddrr]^{\set{x_1,x_3}(x_2)}&& \\
&&\set{x_1,x_2,x_3}&&\\
x_2(x_1(x_3)) \ar @{<-}[ddrr]_{x_2(\set{x_1,x_3})}&&   && \overline{x_3(x_1,x_2)}\ar @{->}[ddll]^{\set{x_2,x_3}(x_1)}\\
&&&&\\
&&  x_2(x_3(x_1))&&}
  \end{array}
$$
Each of these local confluence diagrams stems from a critical pair. 
For example, in the first diagram, we can see the minimal overlapping between applying rewriting to the parts $x_1(-,x_3)$ and $x_1(x_2,-)$ of the construct $x_1(x_2,x_3)$.   Let us make precise what we mean by ``parts''.
Calling $\hyper{K}$ the hypergraph underlying the poset (B2), and $t$ the term corresponding to the construction $x_1(x_2,x_3)$, we have
(cf.~ \cref{open-to-closed})
$t=\sigma_1(t_1)$ (resp.  $t=\sigma_2(t_2)$), where $t_1 \eqdef (x_1,K)(\set{x_2},(x_3,\set{x_3}))$ and $\sigma_1(x_2)\eqdef (x_2,\set{x_2})$ (resp.
$t_2\eqdef (x_1,K)((x_2,\set{x_2}),\set{x_3})$ and $\sigma_2(x_3)\eqdef (x_3,\set{x_3}$)). This allows us to see 
the edges from $t_1$  to $x_2(x_1(x_3))$  and to  $x_3(x_1,x_2)$ as instantiations of the rewriting rules of~ \cref{def:rules-2}. We shall make this formal in \cref{face-flip,thm:critical-pairs}. 

Recall that a function $f$ preserves (resp. reflects) a relation  ${\cal R}$ if for all $x,y$, $x\:{\cal R}\:y$ implies $f(x)\:{\cal R}\: f(y)$ (resp.  $f(x)\:{\cal R} \:f(y)$ implies
$x\:{\cal R}\:y$).

\begin{lemma} \label{face-flip}
  Let $\hyper{H}$ be an ordered connected hypergraph. 
  The bijections $\psi$ (resp.\ $\chi$) preserve and reflect the flip order $<$ on constructions (resp.\ the rewriting steps $\to$),
\end{lemma}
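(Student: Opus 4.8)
The plan is to treat the two bijections in parallel and reduce each assertion to the observation that the combinatorial datum determining a single flip (resp.\ rewriting step) is left untouched by the relevant pruning, so that preservation and reflection both fall out of the definitions. For $\chi$, recall that a rewriting step in $(\Sigma_\hyper{H},R_\hyper{H})$ is witnessed by a node of shape $(X',K)(\dots,(Y,U_i)(\dots),\dots)$ being merged into $(X'\cup Y,K)(\dots)$ (or split the other way), the orientation being decided solely by comparing $\max(Y)$ with $\min(X')$, as in the lemma following \cref{def:rules}. On a term $t$ with $\bigcup\var(t)=H\setminus X$ the first components of the function symbols are subsets of $H$ whose union is $X$, so the nodes $X',Y$ of any such step lie inside $X$; and $\chi$ keeps precisely these first components together with the tree shape they sit in, discarding only the second components and the variables. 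Hence a rewriting step $t\to t'$ — both endpoints having variable support $H\setminus X$, since the two sides of a rule share their variables — maps under $\chi$ to an oriented covering of constructs of $\recrestr{\hyper{H}}{X}$, with orientation given by the same inequality now read inside $X$; conversely every oriented covering of constructs of $\recrestr{\hyper{H}}{X}$ lifts, via the bijection $\chi$ of \cref{l:bijection-open}, to such a step. So $\chi$ preserves and reflects $\to$.

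For $\psi$, by \cref{instance-construct-general} it suffices to treat the case where $X=\set{x_1,x_2,x_3}$ is the root of the $2$-dimensional construct $T$, so that $\psi$ is the order-isomorphism of \cref{instance-construct} obtained by pruning every node not contained in $X$; being an order-isomorphism, it restricts to a bijection from the vertices of $T$ onto those of $\recrestr{\hyper{H}}{X}$ and sends edges to edges. The key point is that the unique non-singleton node of any edge $E$ of $T$ is a subset of $X$: since $T$ covers $E$ and the only non-singleton node of $T$ has cardinality $3$, the contraction turning $E$ into $T$ must merge that cardinality-$2$ node with an adjacent singleton node, so it is contained in the resulting cardinality-$3$ node, which is $X$. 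Consequently, for two vertices $S,S'$ joined by an edge of $T$, the minimal subtree witnessing the flip $S \pre{a}{b} S'$ of \cref{l:instantiation-constructions} is rooted at one of $a,b\in X$ and is copied verbatim by $\psi$: the trees $\psi(S)$ and $\psi(S')$ again differ exactly by the swap of $a$ and $b$ with their children redistributed identically, and the comparison of $a$ and $b$ orients both edges alike. Thus $\psi$ preserves the one-step flip relation on vertices; since a polytope edge carries a unique flip orientation and $\psi$ is bijective on edges, it also reflects it, and passing to reflexive--transitive closures gives the flip order.

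The step I expect to demand the most care is the $\psi$ half: one has to argue cleanly that every edge of an $X$-face ``lives on $X$'' (the cardinality argument above) and that pruning commutes with extracting the minimal flip-witnessing subtree, so that the swapped pair and the order comparison survive literally; the reduction to the case where $X$ is the root also uses that flips among the vertices of $T$ only move elements of $X$. Once these points are pinned down, both halves come down to tracking the definitions of \cref{def:rules} and \cref{l:instantiation-constructions} through the two pruning maps.
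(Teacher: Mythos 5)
Your proposal is correct and takes essentially the same route as the paper: the paper's proof consists precisely of the remark that the statement follows by an easy variation on the proofs of \cref{instance-construct} (for $\psi$) and \cref{l:bijection-open} (for $\chi$), and your argument is exactly that variation spelled out, tracking the swapped pair (resp.\ the merged/split nodes) and the order comparison through the two pruning maps. The observations you single out as delicate --- that the non-singleton node of any edge of an $X$-face is contained in $X$, and that the first components of the function symbols of an open term of variable support $H\setminus X$ partition $X$ --- are indeed the points the paper leaves implicit.
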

\begin{proof} 
  The proof is an easy variation on the proof of~\cref{instance-construct} (resp. of  \cref{l:bijection-open}).
\end{proof}

Recall from \cref{two-types} and~\cref{recollection-section} that $2$-faces of~$\hyper{H}$ can be of type (A) or (B), while local confluence diagrams of~$(\Sigma_\hyper{H}^c,R_\hyper{H}^c)$ can be of type (a1), (a2) or (b).

\begin{thm}
  \label{thm:critical-pairs} 
  The rewriting system $(\Sigma_\hyper{H}^c,R_\hyper{H}^c)$ is locally confluent. The local confluence diagrams originating from  closed terms of  sort $H$ are in one-to-one correspondence with the oriented 2-faces of $\hyper{H}$.
  More precisely, the 2-faces of type (A) provide the local confluence diagrams of type (a1) and (a2),  and the $X$-faces of type (B) provide all the confluence diagrams of type (b).
\end{thm}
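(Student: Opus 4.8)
The plan is to combine Newman's lemma with the Knuth--Bendix lemma, reducing everything to the analysis of critical pairs already carried out in \cref{ss:typeA,ss:typeB}. Since \cref{thm:termination} gives termination, it suffices by \cref{l:local-confluence-critical-pair} to show that every critical pair of $(\Sigma_\hyper{H}^c,R_\hyper{H}^c)$ converges; local confluence then follows, and confluence follows by \cref{thm:Newman}. So the core task is really the \emph{correspondence} statement, which simultaneously establishes convergence of critical pairs.

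First I would analyse what a pair of rewriting steps $s\to t_1$, $s\to t_2$ issuing from a closed term $s$ of sort $H$ can look like, using the trichotomy (a1)/(a2)/(b) recalled in \cref{recollection-section}. Via \cref{l:bijection-constructions}, $s$ is a construction, i.e.\ a vertex of the nestohedron, and by \cref{l:instantiation-constructions} each rewriting step is a flip $\pre{x}{y}$ at some node. Given the two flips, I would locate the two minimal subtrees $\occ{s}{x}$ and $\occ{s}{x'}$ responsible for them. Case (a1): the two subtrees are disjoint (neither contains the other), so $s=D[s_1]^1[s_2]^2$ for a two-hole context $D$; here the two flips commute and the diagram is a square. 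Case (a2): the subtree of one step sits strictly inside a variable position of the (closed-instantiated) other — but since our terms are closed and the system is \emph{linear} (noted after \cref{def:rules-2}), this case also closes as a square, the completion being realized by the ``distant'' flip performed in the instantiated subterm. Case (b): the two subtrees overlap, i.e.\ one of $x,x'$ is the root of the subtree rooted at the other, or both flips act at the same node $X$ of cardinality handled by a length-two $R_\hyper{H}$-decomposition (\cref{consruction-join-constructs}); this forces the two steps to overlap inside a common minimal subtree, whose root decoration, after contracting the relevant node, is a $3$-element set $X$. This is precisely an $X$-face situation.

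The heart of the proof is then case (b). I would argue that an overlapping critical pair from a closed term of sort $H$ is, up to an instantiation-in-context $C[\sigma(-)]$, a critical pair living entirely inside a single minimal subtree $\occ{s}{X}$ with $|X|=3$; by \cref{instance-construct-general} and \cref{instance-construct} (together with \cref{face-flip}, which says $\psi$ and $\chi$ preserve and reflect $\to$ and $<$), the poset of faces of that subtree, together with its induced orientation, is isomorphic to one of the four shapes (B1)--(B4) of \cref{ss:typeB} equipped with the orientation determined by the total order restricted to $X$. For each of the four shapes and each of the $6$ total orders on $\{x_1,x_2,x_3\}$ — the $24$ oriented diagrams, six of which for (B2) are displayed just above the statement — one reads off directly that the critical pair converges: the two outgoing edges at the source $\overline{(\cdot)}$ of the oriented $X$-face can be completed to the common target, which is the sink $\underline{(\cdot)}$ of the face. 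Conversely, every oriented $X$-face arises from exactly one critical pair by instantiation in context (the source vertex of the face gives the overlapping term, the two edges out of it the two steps), and distinct faces or distinct orientations give genuinely distinct local confluence diagrams. For type (A) faces one makes the analogous bookkeeping: a type (A) face has two independent non-singleton nodes $\set{x_1,x_2}$ and $\set{y_1,y_2}$, so the corresponding pair of flips acts at disjoint subtrees — this is exactly case (a1) when the two flip-triggering subtrees are side by side, and case (a2) when one is nested in a variable slot of the other — and the square is the $2$-face. This yields the refined assertion: (A)-faces $\leftrightarrow$ diagrams of type (a1) and (a2), $X$-faces of type (B) $\leftrightarrow$ diagrams of type (b).

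The main obstacle I anticipate is the careful matching in case (b): making precise, in the syntax of contexts and substitutions, that an arbitrary overlapping pair from a term of sort $H$ really is an instantiation-in-context of one of the twenty-four ``atomic'' oriented $X$-faces, and that this assignment is a bijection onto oriented $2$-faces of type (B). Concretely one must check that when two flips at nodes $x$ and $y=$(a child of $x$, or with $x$ a child of $y$) overlap, contracting the intervening edges exposes a three-element node $X$ and that the minimal such overlapping is captured exactly by $\recrestr{(\hyper{H}_{\supp(\occ{s}{X})})}{X}$ — i.e.\ by the \emph{shape} of the $X$-face in the sense following \cref{instance-construct-general} — with no residual freedom; the two-step simulation of \cref{consruction-join-constructs} is the key device that lets one see a single $R_\hyper{H}^c$-overlap as governed by the $\prec$-combinatorics on constructs. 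Once this dictionary is set up, verifying convergence of each of the four shapes is the routine diagram-chase already displayed for (B2), and the type (A) bookkeeping for (a1)/(a2) is straightforward from the disjointness of the two non-singleton nodes.
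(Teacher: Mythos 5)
Your argument is correct, and its combinatorial core (locate the two flips $S\pre{x}{y}T$, $S\pre{u}{v}U$, split into the disjoint case versus the overlapping case, contract the relevant edges to expose a $3$-element node $X$, and read off convergence on each of the $24$ oriented shapes (B1)--(B4)) is the same analysis the paper performs. The logical packaging differs, though: the paper's proof works directly with constructions via \cref{l:bijection-constructions,l:instantiation-constructions,face-flip} and checks \emph{all} pairs of reduction steps at once, so that local confluence and the bijection with oriented $2$-faces come out of one case analysis; it explicitly does \emph{not} invoke the Knuth--Bendix criterion (\cref{l:local-confluence-critical-pair}), and it defers the statement that overlapping diagrams from closed terms are instantiations in context of ``atomic'' critical diagrams to the subsequent \cref{instance-in-context} and \cref{critical-pairs-nestohedra}. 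Your route instead front-loads Knuth--Bendix and the instantiation-in-context matching, which is legitimate but partly redundant: once you carry out the (a1)/(a2)/(b) analysis on arbitrary pairs of flips from closed terms of sort $H$ (which you need anyway for the correspondence statement), local confluence is already proved, so the reduction to critical pairs buys nothing here; likewise \cref{thm:termination} and \cref{thm:Newman} are not needed for this theorem (they enter only in \cref{thm:confluent}). The ``careful matching'' you flag as the main obstacle in case (b) is exactly what the paper isolates as \cref{instance-in-context}, so your proof is in effect the paper's proof of \cref{thm:critical-pairs} merged with its later geometric analysis of critical pairs, at the cost of needing that extra lemma inside the proof rather than after it.
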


\begin{proof}
  We use \cref{l:bijection-constructions,l:instantiation-constructions,face-flip} to work directly with constructions. 
Let us consider three constructions $S,T,U$ such that $S\pre{x}{y}T$ and $S \pre{u}{v} U$, with $(x,y)\neq(u,v)$.
  We have~$\occ{S}{x}=x(y(\ldots),\ldots)$ and $\occ{S}{u}=u(v(\ldots),\ldots)$. 
  There are two cases to consider:
  \begin{itemize}
  \item[(A)] $\set{x,y}\cap\set{u,v}=\emptyset$: then the two reductions do not overlap and we are in the situation in which $S,T,U$ fit in a 2-face of type (A).  One sees easily that we get local confluence diagrams of type (a1) (resp. of type (a2)) if the edge from $x$ to $y$ is disjoint from (resp. below or above) the edge  from $u$ to $v$ in $S$. 
  \item[(B)] $\set{x,y}\cap\set{u,v}\neq\emptyset$. 
  There are a priori four subcases:
  \begin{itemize}
  \item $x=u$: then $\occ{S}{x}=x(y(\ldots),v(\ldots),\ldots)$;
  \item $y=u$: then  $\occ{S}{x}=x(y(v(\ldots),\ldots),\ldots)$;
  \item $x=v$: up to permuting $T$ and $U$, this is the previous case;
  \item $y=v$: this would force $x=u$, contrary to our assumption.
  \end{itemize}
  This gives evidence that case (B) features the two (and only two) overlapping situations $x(y(\ldots),v(\ldots),\ldots)$ (with $x<y$ and $x<v$) and 
  $x(y(v(\ldots),\ldots),\ldots)$ (with $x<y<v$), and the four subcases  (in their oriented version as above) show how to complete the local confluence diagrams.  
\end{itemize}.
\end{proof}

\begin{thm}
  \label{thm:confluent}
  The rewriting system $(\Sigma_\hyper{H}^c ,R_\hyper{H}^c)$ is confluent and terminating.
\end{thm}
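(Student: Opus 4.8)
The plan is to assemble the results already proved in this section. First I would invoke \cref{thm:termination}, which states that $(\Sigma_\hyper{H}^c,R_\hyper{H}^c)$ is terminating; recall that its proof reduces, via \cref{closed-termination}, to termination on closed terms — the relevant hypothesis, that every sort $X$ is inhabited by a closed term, being immediate since $(X,X)$ is a $0$-ary function symbol of sort $X$ — and that termination on closed terms follows from the well-foundedness of the flip order established in \cref{p:flip-partial-order}. Second, I would invoke \cref{thm:critical-pairs}, which establishes that the system is locally confluent: by the Knuth--Bendix lemma (\cref{l:local-confluence-critical-pair}) it suffices that every critical pair converge, and \cref{thm:critical-pairs} secures this by identifying the critical confluence diagrams with the oriented $X$-faces of type (B) and the non-overlapping local confluence diagrams with the $2$-faces of type (A), exhibiting in each type-(B) shape the six possible orientations together with their completions.

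With termination and local confluence both in hand, the statement follows at once from Newman's lemma (\cref{thm:Newman}): a terminating, locally confluent term rewriting system is confluent. Hence $(\Sigma_\hyper{H}^c,R_\hyper{H}^c)$ is confluent and terminating, which is exactly the assertion.

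I do not expect any genuine obstacle to remain at this point. The real work has already been carried out: the classification of $2$-faces in the anatomy of the $2$-skeleton (\cref{s:anatomy}), the purely combinatorial termination argument via coordinate vectors (\cref{p:flip-partial-order}), and the overlap analysis of \cref{thm:critical-pairs}. The present theorem is precisely the synthesis of these pieces through Newman's lemma, so the only thing to be careful about is to cite \cref{thm:termination} and \cref{thm:critical-pairs} correctly and to verify that the termination hypothesis of \cref{closed-termination} is met — which, as noted above, is trivial.
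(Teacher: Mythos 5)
Your proof is correct and is essentially the paper's own argument: cite \cref{thm:termination} for termination, \cref{thm:critical-pairs} for local confluence, and conclude by Newman's lemma (\cref{thm:Newman}). One small remark: the paper's proof of \cref{thm:critical-pairs} establishes local confluence directly by case analysis and explicitly does \emph{not} route through the Knuth--Bendix lemma (\cref{l:local-confluence-critical-pair}), so your attribution of that step to Knuth--Bendix slightly misdescribes the internal mechanics, though it does not affect the validity of the present deduction.
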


\begin{proof}
  By \cref{thm:termination} and \cref{thm:critical-pairs}, $(\Sigma_\hyper{H}^c,R_\hyper{H}^c)$ is terminating and locally confluent, and therefore confluent by~\cref{thm:Newman}.
\end{proof}

We note that the  proof of \cref{thm:critical-pairs} above  does not make use of the critical pair \cref{l:local-confluence-critical-pair}. But we can analyse critical pairs geometrically, as we show now.
Recall the functions $\psi$ and $\chi$ from 
 \cref{instance-construct,l:bijection-open}.
The following Lemma substantiates the view that $X$-faces are instantiations in context of their shape, and is the key to the analysis of critical pairs in \cref{critical-pairs-nestohedra} below. 
Its statement requires a bit of ``yoga''.
 
Our goal is to exhibit any closed term $s$ corresponding to a construct of an $X$-face $T$ as an instantiation in context  of an open term associated with the shape of that face.
For this we use the bijections $\chi$ and $\psi$. We first transform $s$ into a construct $S:=\chi(s)$. 
Then we use $\psi$ to get a construct $S':=\psi(S)$ of the same dimension in the shape of $T$. 
Finally, we use the inverse of $\chi$ to get an open term
$t':=\chi^{-1}(S')$. 
The next lemma states that our original~$t$ is an instance in context of $t'$.

\begin{lemma} 
  \label{instance-in-context}
Let $T$ be a $X$-face of a connected hypergraph $\hyper{H}$, and suppose that $\hyper{H},X\leadsto U_1,\ldots U_n$. 
Then, the composite $\xi\eqdef \chi^{-1}\circ \psi\circ \chi$ mapping terms over $\Sigma_{\recrestr{(\hyper{H}_{\supp(\occ{T}{X})})}{X}}$ to closed terms over $\Sigma_{\hyper{H}}$ admits the following description: 
\begin{itemize}
  \item Writing $\chi^{-1}(T)=C[(X,\supp(\occ{T}{X}))(t_1,\ldots t_n)]$ and defining the substitution $\sigma$ by $\sigma(U_i)\eqdef t_i$, then we have $\xi(t')=C[\sigma(t')]$, for all $t'$ in the domain of $\xi$.
\end{itemize}

\end{lemma}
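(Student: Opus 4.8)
The statement is essentially a compatibility assertion between three bijections established earlier, so the plan is to unwind all definitions and verify the claimed formula $\xi(t') = C[\sigma(t')]$ by structural induction on the term $t'$ over $\Sigma_{\recrestr{(\hyper{H}_{\supp(\occ{T}{X})})}{X}}$. First I would fix notation: write $K \eqdef \supp(\occ{T}{X})$, so that the shape of $T$ is $\recrestr{\hyper{K}}{X}$, and recall that $\chi$ (from \cref{l:bijection-open}) turns terms into constructs by projecting function symbols to first components and pruning variables, while $\psi$ (from \cref{instance-construct}) sends a subface $S$ of $T$ to the face of $\recrestr{\hyper{H}}{X}$ obtained by pruning all nodes not contained in $X$. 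The three arrows compose as announced: $\chi$ lands in constructs of $\recrestr{\hyper{K}}{X}$, $\psi$ (here its inverse $\phi$, really — I would use the explicit inverse built in the proof of \cref{instance-construct}) sends those back into subfaces of $T$ by re-attaching the subtrees $t_1,\dots,t_n$ hanging below $X$ in $T$, and $\chi^{-1}$ re-inflates the pruned variables into the term $C[\,\cdot\,]$ enveloping the node $X$.

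The inductive argument runs as follows. For the base case, $t'$ is a variable $U_i$ (a sort of the shape hypergraph, hence a connected component appearing in $\recrestr{\hyper{K}}{X}$). Then $\xi(U_i)$ should be exactly the closed term $t_i$, since $\chi$ prunes $U_i$ to the empty construct decorated by that component, $\phi$ re-attaches the subtree $T_i = \chi(t_i)$ of $T$ sitting under the corresponding branch of $X$, and $\chi^{-1}$ reads it back as $t_i$; this matches $C[\sigma(U_i)] = C[t_i]$ since $C$ is the identity context when $t'$ is just a variable slot — more precisely one checks the outer envelope $C$ and the substitution $\sigma$ interact exactly so that the variable-case gives $\sigma(U_i) = t_i$. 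For the inductive step, $t' = (Z, Y)(t'_1,\dots,t'_k)$ with $Z \subseteq Y$, $\hyper{(\recrestr{\hyper{K}}{X})}_Y, Z \leadsto Y_1,\dots,Y_k$; here I would use the explicit formula for $\phi$ given in the proof of \cref{instance-construct} (the case analysis B1–B4), observing that $\phi$ commutes with grafting: the construct $\psi(\chi(t'))$ has $Z$ (a subset of $X$) at some internal position, and $\phi$ redistributes the children of $X$ in $T$ down along that node exactly as the recursive application of $\xi$ to the $t'_i$ prescribes. Then $\chi^{-1}$ of the result is $C[\,(X,K)(\dots)\,]$-shaped, and pulling $C$ and $\sigma$ out of each subterm by the induction hypothesis yields $C[\sigma(t')]$ because substitution commutes with the term constructors (it is a homomorphism) and $C$ is a fixed context not touching the region where the $t'_i$ live.

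The main obstacle — and the place to be careful rather than the place where anything deep happens — is bookkeeping the reindexing of children. When $Z \subsetneq X$, applying $\phi$ in the shape hypergraph splits a single branch of $X$ into several branches (this is precisely the $p+q$ versus single-child phenomenon in the B2 computation of \cref{instance-construct}), and one must check that the subtrees $t_1,\dots,t_n$ attached to $X$ in $T$ get distributed to the correct descendants, consistently with how $\hyper{H},X \leadsto U_1,\dots,U_n$ refines after un-contracting $Z$ inside $X$. The key algebraic facts making this go through are that $\chi$ and $\psi$ are defined component-wise on subtrees and that substitution is a term homomorphism, so the induction hypothesis propagates; concretely I would invoke \cref{partial-construct} to see that each piece $X(\{t_i\}_{i \in I})$ of the decomposition is a legitimate construct of the appropriate reconnected restriction, which is exactly what licenses writing $\chi^{-1}(T) = C[(X,K)(t_1,\dots,t_n)]$ in the first place. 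Once the grafting bookkeeping is pinned down, the equality $\xi(t') = C[\sigma(t')]$ is forced at every node, completing the induction.
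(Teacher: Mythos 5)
Your plan is essentially the paper's own proof: the paper merely records the key observation that the inverse $\phi$ of $\psi$ ``secretly'' performs an instantiation (your re-attachment of the subterms $t_i$ at the variables $U_i$) and leaves the remaining tracking of $\chi$, $\phi$, $\chi^{-1}$ to the reader, which is exactly the bookkeeping your structural induction carries out. One small adjustment: a bare variable $U_i$ is not itself in the domain of $\xi$ (terms there have sort $\supp(\occ{T}{X})$ and variable set $\{U_1,\ldots,U_n\}$), so your ``base case'' should be phrased as the clause handling variable occurrences inside a term rather than as an input to $\xi$.
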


\begin{proof}
As is quite common in matters involving syntax, the difficulty lies more in formulating the statement than in proving it. The proof consists in carefully tracking the successive transformations. 
The key observation is that the construction of the inverse $\phi$ of $\psi$ in the proof of \cref{instance-construct} ``secretly" performs an instantiation. 
The details are left to the reader.
\end{proof}

\begin{proposition}
\label{critical-pairs-nestohedra}
The critical confluence diagrams of the rewriting system $(\Sigma_\hyper{H}^c,R_\hyper{H}^c)$  are provided by the maximal  faces of all $\recrestr{(\hyper{H}_Y)}{X}$, for $X\inc Y \inc H$ with $Y$ connected and $X$ of cardinality 3.
\end{proposition}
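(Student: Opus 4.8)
The plan is to combine the abstract rewriting machinery of \cref{l:local-confluence-critical-pair} (critical pairs determine local confluence) with the geometric dictionary already built up: \cref{thm:critical-pairs}, which says that the type-(b) local confluence diagrams of $(\Sigma_\hyper{H}^c,R_\hyper{H}^c)$ are exactly the oriented $X$-faces of $\hyper{H}$, together with \cref{instance-construct-general}, which identifies the poset of faces of an $X$-face $T$ with the poset of faces of its \emph{shape} $\recrestr{(\hyper{H}_{\supp(\occ{T}{X})})}{X}$, and \cref{instance-in-context}, which exhibits the closed term of such a $T$ as an instantiation in context of the open term coming from the shape. The upshot will be: critical confluence diagrams correspond to those $X$-faces that are \emph{not} themselves instantiations in context of a strictly smaller $X$-face, and these are precisely the maximal faces of the various shapes $\recrestr{(\hyper{H}_Y)}{X}$.

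First I would recall the classification in \cref{recollection-section}: a pair of rewriting steps out of $s$ is a critical pair exactly when it is minimal, i.e.\ it cannot be written as $C[\sigma(s')] $ with $s' \to t_1'$, $s'\to t_2'$ a strictly smaller pair of rewriting steps sharing the same overlap (neither a context $C$ nor a nontrivial substitution $\sigma$ can be factored out). Working with constructions via \cref{l:bijection-constructions,l:instantiation-constructions,face-flip} as in the proof of \cref{thm:critical-pairs}, a type-(b) confluence diagram is an $X$-face $T$, say with $X$ not necessarily the root of $T$; write $Y \eqdef \supp(\occ{T}{X})$, a connected subset of $H$ containing $X$, so that the shape of $T$ is $\recrestr{(\hyper{H}_Y)}{X}$. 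Using \cref{instance-in-context}, $\chi^{-1}(T) = C[(X,Y)(t_1,\dots,t_n)]$ for a context $C$ and terms $t_i$; hence the whole confluence diagram of $T$ is the image under $C[\sigma(-)]$ of the confluence diagram of the \emph{maximal} $X$-face of the shape $\recrestr{(\hyper{H}_Y)}{X}$ (the one whose underlying construct has root $X$). Conversely, stripping $C$ and $\sigma$ off an arbitrary $X$-face always lands on a confluence diagram of the shape of the form ``root $X$'', and by \cref{instance-construct-general} this is the maximal face of $\recrestr{(\hyper{H}_Y)}{X}$. So every type-(b) diagram is an instantiation in context of the maximal-face diagram of some $\recrestr{(\hyper{H}_Y)}{X}$, and these maximal-face diagrams are themselves not proper instantiations in context (the root is already $X$ and the subterms hanging off $X$ are variables, so no nontrivial $C$, $\sigma$ can be pulled out); therefore they are exactly the critical pairs, and their confluence diagrams are the critical confluence diagrams.

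The main obstacle, and the step I would spend the most care on, is the minimality bookkeeping: one must check that the maximal-face diagram of $\recrestr{(\hyper{H}_Y)}{X}$ genuinely \emph{is} a critical pair in the precise sense of \cref{recollection-section} — that is, that no context and no substitution can be factored out of the two overlapping rewriting steps $x(y(\ldots),v(\ldots),\ldots)$ and $x(y(v(\ldots),\ldots),\ldots)$ identified in the proof of \cref{thm:critical-pairs} — and, dually, that distinct maximal faces of distinct shapes give distinct (non-conjugate) critical pairs, so that the correspondence is one-to-one. This is essentially a matter of tracking which nodes of the construct are ``active'' (the three singletons $x_1,x_2,x_3$ making up $X$) versus which are absorbed into the context or the substitution, and invoking \cref{instance-in-context} to see that absorbing anything more would force $X$ off the root. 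Once this is in place, the statement follows by assembling \cref{l:local-confluence-critical-pair}, \cref{thm:critical-pairs}, \cref{instance-construct-general} and \cref{instance-in-context} as above; I would phrase the final paragraph as: the critical confluence diagrams are in bijection with pairs $(X,Y)$, $X\inc Y\inc H$, $Y$ connected, $|X|=3$, each contributing the confluence diagram of the maximal face of $\recrestr{(\hyper{H}_Y)}{X}$, which is what the proposition asserts.
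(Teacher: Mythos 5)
Your proposal is correct and follows essentially the same route as the paper: the paper's own proof is simply ``this follows from \cref{instance-in-context}'', i.e.\ it relies on exactly the combination you spell out, namely \cref{thm:critical-pairs} identifying type-(b) diagrams with $X$-faces and \cref{instance-in-context} exhibiting each $X$-face as an instantiation in context of the maximal face of its shape $\recrestr{(\hyper{H}_Y)}{X}$ with $Y=\supp(\occ{T}{X})$. Your additional minimality bookkeeping (the maximal-face diagram has root $X$ with variables as children, so no context or substitution can be factored out) is a faithful expansion of what the paper leaves implicit.
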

\begin{proof}
This follows from \cref{instance-in-context}.
\end{proof}

\begin{rem}
  \label{rem:coherence}
A consequence of a term rewriting system  being confluent and terminating is that it is \defn{coherent}:
every two parallel  sequences
 $s\leftrightarrow s_1 \leftrightarrow \ldots \leftrightarrow s_m\to t$ and $s\leftrightarrow s'_1 \leftrightarrow \ldots \leftrightarrow s'_n\leftrightarrow t$, where $\leftrightarrow$ is the symmetric closure of $\to$,  can be proved equal by successive transformations replacing a part of a  sequence by a ``complementary''  sequence forming with it the (non-oriented) border of a local confluence diagram. 
 This statement can be proved by following Huet's steps sketched in the introduction.  
 A nice and detailed exposition can be found in~\cite{Beke-Knuth}, see also~\cite{GM-FDT} in the 
 setting of word rewriting.
 Alternatively, via the dictionary  established in this section between rewriting and nestohedra, this result, for $(\Sigma_\hyper{H}^c ,R_\hyper{H}^c)$, falls out as a special case of our combinatorial coherence theorem in \cite[Thm~1.4 \& Prop.~1.7]{CLA1}. 
 We observe here that since nestohedra are simple polytopes~\cite[Sec.~9]{DP-HP}, the proof of the instrumental Lemma 1.3 in \cite{CLA1} can be simplified, as the simplicity assumption implies that the case (3) considered in its proof does not apply. 
 This makes our combinatorial proof in \cite{CLA1} even more akin to the rewriting proof.
 \end{rem}


\section{Contextual families of nestohedra} 
\label{s:contextual}

In this section, we define a subclass of the class of  nestohedra, which, as we shall argue, brings us even closer to
our favourite categorical coherence results. We begin by giving some intuition.

\subsection{Discussion} \label{contextual-discussion}

In the previous section, we have established coherence (in our polytopal sense) using standard term rewriting methods. So far so good. But  it can be argued that constructs are more economical than the terms on the signatures that we have introduced, and that, being themselves trees, they ``look like" terms. 
Can we somehow reproduce our discussion of rewriting, and in particular of critical pairs, directly on constructs?

An obstacle is that the bijection $\chi$ of~ \cref{l:bijection-terms} strips off useful information on the support of all subconstructions (i.e.,  subtrees) of the constructions to be ``rewritten''. 
Taking~ \cref{l:instantiation-constructions} seriously, one can be tempted to consider $x(y(\cdots),\cdots)< y(x(\cdots),\cdots)$ as a rewriting rule (for $x<y$), but filling in the $\cdots$ requires knowing the support $K$ of $\occ{S}{x}$. 
So, at the price of reconstructing $K$ from the inductive definition of $S$, this description is acceptable at the level of rewriting steps. But for the study of critical confluence diagrams, say   $S < x_2(x_1(x_3,\cdots),\cdots),\cdots)$ and $S < x_3(x_1(\cdots),x_2(\cdots),\cdots)$, for $S\eqdef  x_1(x_2(\cdots),x_3(\cdots),\cdots)$, can we be sure that the shape of the local confluence diagram will be independent of the support of $\occ{S}{x_1}$?  

In view of~ \cref{instance-construct-general}, this shape is
$\recrestr{(\hyper{H}_{\supp(\occ{T}{x_1})})}{\set{x_1,x_2,x_3}}$. 
It would be nicer if the shape was $\recrestr{\hyper{H}}{\set{x_1,x_2,x_3}}$, because then the critical confluence diagram would be ``context-independent'' (for this informal notion of rewriting), i.e., all $\set{x_1,x_2,x_3}$-faces would have the same shape.
This motivates the definition of contextual hypergraph below. 


\subsection{Contextual nestohedra}

\begin{definition} 
A connected hypergraph $\hyper{H}$ is \defn{contextual} if, for all connected subsets $Y\inc H$ of cardinality $|Y|\geq 3$, and for all $3$-elements subsets $X=\{x,y,z\} \subseteq Y$, one of the following equivalent conditions is satisfied:
\begin{enumerate}
\item
$\begin{array}{lll}
  \xyz{x}{{\hyper{H}_Y}}{\set{y,z}} & \Leftrightarrow & \xyz{x}{\hyper{H}}{\set{y,z}}
  \end{array}$,
\item
$\hyper{H}_{\cap X} = (\hyper{H}_Y)_{\cap X}$.
\end{enumerate}
\end{definition}
That these conditions are equivalent   is a direct consequence of~\cref{xyz-reconnected}.
We first give two examples of hypergraphs that are \emph{not} contextual.


\begin{example} 
  \label{non-contextual-1}
Consider the hypergraph 
\[
  \hyper{H}\eqdef  \set{\set{x},\set{y},\set{z},\set{u},\set{x,y,z}, \set{x,u,z}},
  \]
the set $X\eqdef \set{x,y,z}$ and the two $X$-faces $S\eqdef u(X)$ and $T\eqdef X(u)$. 
Then $\occ{S}{X}$ is a construct of $\hyper{K}\eqdef \restrH{H}{\set{u}}$ while $\occ{T}{X}=T$ is a construct of $\hyper{H}$.
But we have $\xyz{y}{\hyper{K}}{\set{x}\!,\!\set{z}}$ and $\xyz{y}{\hyper{H}}{\set{x,z}}$, which implies that $S$ is a triangle while $T$ is a quadrilateral, as
$\recrestr{\hyper{K}}{\set{x,y,z}} = \hyper{K}  =  \set{\set{x},\set{y},\set{z},\set{x,y,z}}$ and $\recrestr{\hyper{H}}{\set{x,y,z}}  =  \set{\set{x},\set{y},\set{z},\set{u},\set{x,z}\set{x,y,z}}$. Therefore $\hyper{H}$ is not contextual.
\end{example}
  
\begin{example} 
  \label{non-contextual-2}
Consider the graph 
$$\set{\set{x},\set{y},\set{z},\set{u},\set{x,y}, \set{y,z}, \set{x,u}, \set{u,z}}.$$
Then exactly the same data as in~ \cref{non-contextual-1} provide evidence that this graph, whose realization is the three-dimensional cyclohedron, is not contextual. 
\end{example}


The following Proposition allows us to see all $X$-faces as ``instantiations in context'' of~$\recrestr{\hyper{H}}{X}$.

\begin{proposition} \label{situation-construct}
Let $\hyper{H}$ be a contextual hypergraph.
If $X$ is a subset of $H$ such that $|X|=3$ and $T$ is an $X$-face of $\hyper{H}$, then the poset of faces of $T$ is isomorphic to the poset of faces of~$\recrestr{\hyper{H}}{X}$.
\end{proposition}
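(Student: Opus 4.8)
The plan is to derive this as an immediate corollary of the work already done on $X$-faces. By \cref{instance-construct-general}, the poset of faces of any $X$-face $T$ is isomorphic to the poset of faces of $\recrestr{(\hyper{H}_{\supp(\occ{T}{X})})}{X}$, i.e.\ to the faces of the shape of $T$. So the only thing left to prove is that, under the contextuality hypothesis, this shape equals $\recrestr{\hyper{H}}{X}$, that is
\[
\recrestr{(\hyper{H}_{\supp(\occ{T}{X})})}{X} = \recrestr{\hyper{H}}{X}.
\]

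First I would set $Y \eqdef \supp(\occ{T}{X})$ and note that $\hyper{H}_Y$ is connected (by \cref{subconstruct-restriction}, $\occ{T}{X}$ is a construct of $\hyper{H}_Y$, and constructs only exist for connected hypergraphs), and that $X \inc Y \inc H$ with $|X| = 3$. Then I would invoke condition (2) in the definition of contextual hypergraph: for the $3$-element subset $X \inc Y$, contextuality gives exactly $\hyper{H}_{\cap X} = (\hyper{H}_Y)_{\cap X}$, which in the notation $\recrestr{-}{-}$ reads $\recrestr{\hyper{H}}{X} = \recrestr{(\hyper{H}_Y)}{X}$. This is precisely the equality of shapes required. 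Combining this with \cref{instance-construct-general} yields that the poset of faces of $T$ is isomorphic to the poset of faces of $\recrestr{\hyper{H}}{X}$, which is the claim.

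The main (very minor) obstacle is bookkeeping around edge cases: one should check that contextuality, which quantifies over connected $Y$ with $|Y| \geq 3$, actually applies here — i.e.\ that $|Y| \geq 3$. This holds because $X \inc Y$ and $|X| = 3$. One should also make sure the degenerate reading is consistent when $Y = H$, in which case $\hyper{H}_Y = \hyper{H}$ and the equality is trivial, consistent with \cref{instance-construct} directly. No real calculation is needed; the proof is a one-line combination of \cref{instance-construct-general} with clause (2) of the definition of contextual hypergraph, together with the remark (already in the excerpt, justified via \cref{xyz-reconnected}) that clauses (1) and (2) are equivalent.

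\begin{proof}
Let $Y \eqdef \supp(\occ{T}{X})$. Since $\occ{T}{X}$ is a construct of $\hyper{H}_Y$ (\cref{subconstruct-restriction}), the hypergraph $\hyper{H}_Y$ is connected, and clearly $X \inc Y \inc H$ with $|X| = 3$, so in particular $|Y| \geq 3$. By \cref{instance-construct-general}, the poset of faces of $T$ is isomorphic to the poset of faces of $\recrestr{(\hyper{H}_Y)}{X}$. On the other hand, applying condition (2) in the definition of a contextual hypergraph to the connected subset $Y$ and the $3$-element subset $X \inc Y$ yields $\hyper{H}_{\cap X} = (\hyper{H}_Y)_{\cap X}$, that is, $\recrestr{\hyper{H}}{X} = \recrestr{(\hyper{H}_Y)}{X}$. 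Hence the poset of faces of $T$ is isomorphic to the poset of faces of $\recrestr{\hyper{H}}{X}$.
\end{proof}
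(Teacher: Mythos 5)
Your proof is correct and matches the paper's intended argument: the paper also derives the proposition directly from \cref{instance-construct-general}, with contextuality (condition (2) of the definition) supplying the identification $\recrestr{(\hyper{H}_{\supp(\occ{T}{X})})}{X} = \recrestr{\hyper{H}}{X}$. Your write-up is just a more explicit version of this, including the harmless checks that $\hyper{H}_{\supp(\occ{T}{X})}$ is connected and has at least three vertices.
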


\begin{proof}  This is a direct consequence of~ \cref{instance-construct-general}.
\end{proof}


\subsection{Contextual families}

Motivated by the examples presented in \cref{ss:hypergraph-polytopes}
and their associated categorical coherence theorems listed in \cref{table:contextual-hyper}, we define now the notion of a contextual \emph{family} of nestohedra.

Identifying a hypergraph $\hyper{H}$ with the maximal construct $T$ of $({\cal A}(\hyper{H}),\preceq)$, we say that $\hyper{H}$ has \defn{dimension} $\dim T$ (cf.  \cref{two-types}).
For a family of hypergraphs $\calH$, we denote by $\calH(n)$ the subset of hypergraphs of dimension $n \geq 0$.

We will consider families of ordered hypergraphs. 
Note that when $\hyper{H}$ is ordered, all the restrictions $\hyper{H}_X$ and reconnected restrictions $\hyper{H}_{\cap X}$ are naturally ordered hypergraphs.

\begin{definition}
  \label{def:contextual-family}
    A family $\calH$ of ordered hypergraphs is \defn{contextual} if 
    \begin{enumerate}
      \item any ordered hypergraph $\hyper{H} \in \calH$ is contextual,
      \item for any $\hyper{H} \in \calH$ and any $X \subseteq H$, all the connected components of $\hyper{H}\setminus X$ are in $\calH$,
      \item we have $\{\hyper{H}_{\cap X} \ | \ X \subset H, |X|=3, \hyper{H} \in \calH \} \subseteq \calH(2)$.
    \end{enumerate}
\end{definition}
As for point (3) of this definition, we note that a reconnected restriction of a contextual hypergraph is contextual.

The term rewriting systems from \cref{ss:rewriting-constructs,ss:rewriting-constructions} can be adapted to a rewriting system on \emph{all} hypergraphs of $\calH$.
We shall focus on the constructions rewriting system. 

\begin{definition}
  For a contextual family of hypergraphs $\calH$, we consider the \defn{constructions signature}~$\Sigma_\calH^c$ defined by the following data:
  \begin{itemize}
    \item Variables and sorts are elements of $\calH$.
    \item Function symbols are ordered pairs $(x,\hyper{H})$ where $\hyper{H} \in \calH$ and $x\in H$:
    $$F\eqdef \{(x,\hyper{H}) \ | \ x \in H, \ \hyper{H} \in \calH \}.$$
    \item For $(x,\hyper{H}) \in F$, we define $\ari(x,\hyper{H})$ as the number of connected components of~$\hyper{H} \setminus {x}$.
    \item Variables $\hyper{H} \in V$ are their own sort $\outsort(\hyper{H})\eqdef \hyper{H}$, while function symbols~$(x,\hyper{H}) \in F$ are of sort $\outsort(x,\hyper{H})\eqdef \hyper{H}$.
    \item For function symbols $(x,\hyper{H}) \in F$ such that $\hyper{H},x \leadsto H_1,\ldots,H_n$, and for $1 \leq i \leq n$, we define $\insort((x,\hyper{H}),i)\eqdef \hyper{H}_i$.
  \end{itemize}
\end{definition}

It follows from~\cref{def:contextual-family} and the fact that the restriction of a contextual hypergraph is contextual, that this signature is well-defined.
Moreover, it is straightforward to adapt \cref{l:bijection-constructions} and \cref{def:rules-2} to obtain a term rewriting system $(\Sigma_\calH,R_\calH)$ on the constructions of $\calH$. 

From \cref{thm:critical-pairs}, we have that all local confluence diagrams for overlapping pairs of $(\Sigma_\calH,R_\calH)$ correspond to some $X$-face of some $\hyper{H} \in \calH$.
The fact that $\calH$ is contextual imposes an additional uniformity constraint on these diagrams.

\begin{thm}
Let $\calH$ be a contextual family of ordered hypergraphs.
For any $\hyper{H} \in \calH$ and subset $X \subseteq H$ with $|X|=3$, all the $X$-faces and hence their associated  overlapping local confluence diagrams 
have the same shape $\hyper{H}_{\cap X}$. 
\end{thm}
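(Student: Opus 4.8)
The plan is to deduce the statement directly from the notion of \emph{shape} introduced after \cref{instance-construct-general} together with condition (2) in the definition of a contextual hypergraph. First I would fix $\hyper{H} \in \calH$ and a $3$-element subset $X \subseteq H$, take an arbitrary $X$-face $T$ of $\hyper{H}$, and set $Y \eqdef \supp(\occ{T}{X})$. By \cref{subconstruct-restriction}, the subtree $\occ{T}{X}$ is a construct of $\hyper{H}_Y$; since constructs are only defined for connected hypergraphs, $\hyper{H}_Y$ is connected, and moreover $X \subseteq Y$ (as $X$ decorates the root of $\occ{T}{X}$) and $|Y| \geq |X| = 3$. These are exactly the hypotheses under which contextuality of $\hyper{H}$ applies to the pair $(Y,X)$, and $\hyper{H}$ is itself contextual by condition (1) of \cref{def:contextual-family}.

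Next I would invoke the definition of shape: $T$ has shape $\recrestr{(\hyper{H}_Y)}{X}$. Condition (2) of contextuality for the pair $(Y,X)$ reads $\hyper{H}_{\cap X} = (\hyper{H}_Y)_{\cap X}$, that is $\recrestr{\hyper{H}}{X} = \recrestr{(\hyper{H}_Y)}{X}$. Hence the shape of $T$ equals $\hyper{H}_{\cap X}$, which is manifestly independent of the chosen $X$-face $T$. By \cref{instance-construct-general} this also yields that all these $X$-faces have isomorphic posets of faces, namely that of $\hyper{H}_{\cap X}$ — this is precisely \cref{situation-construct}.

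For the statement about local confluence diagrams, I would recall from \cref{thm:critical-pairs}, applied to $(\Sigma_\calH,R_\calH)$ as in the discussion preceding its statement, that the overlapping local confluence diagrams involving $\hyper{H}$ are exactly its type-(B) $X$-faces, and that the shape of such a diagram is by definition the shape of the corresponding $X$-face. Combining this with the previous paragraph gives that they all have shape $\hyper{H}_{\cap X}$.

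I do not expect any genuine obstacle here: the notion of contextual hypergraph was tailored precisely so that $\recrestr{(\hyper{H}_Y)}{X}$ does not depend on $Y$, and the argument amounts to matching the definition of shape against condition (2) of contextuality. The only point requiring a little care is checking that $Y = \supp(\occ{T}{X})$ is connected, contains $X$, and has at least three elements, so that contextuality may indeed be invoked for the pair $(Y,X)$.
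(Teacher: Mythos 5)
Your proposal is correct and follows essentially the same route as the paper: the paper's proof is a direct appeal to \cref{situation-construct} (itself resting on \cref{instance-construct-general} and condition (2) of contextuality), and you simply unfold that chain, checking that $Y=\supp(\occ{T}{X})$ is connected, contains $X$, and has cardinality at least $3$ so that $\recrestr{(\hyper{H}_Y)}{X}=\recrestr{\hyper{H}}{X}$, then transferring the statement to the overlapping local confluence diagrams via \cref{thm:critical-pairs} exactly as the paper does.
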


\begin{proof} 
This is a direct consequence of~\cref{situation-construct}.
\end{proof}

Mimicking~\cref{thm:confluent}, we get that $(\Sigma_\calH,R_\calH)$ is confluent and terminating.
Moreover, by virtue of Condition (3) in \cref{def:contextual-family}, all the critical confluence diagrams of $(\Sigma_\calH,R_\calH)$ are in~$\calH(2)$.
We argue that these diagrams should be called \emph{coherence conditions}, in view of \cref{rem:coherence} as well as of the following examples of contextual families and their coherence theorems. 


\subsection{Examples}
\label{ss:examples}

We call \defn{contextual graph-associahedra} (resp.\ \defn{contextual nestohedra}) the hypergraph polytopes whose underlying hypergraph is a contextual (hyper)graph.
Here, we include a copy of each (hyper)graph for each possible total order on its vertices.
Recall that simplices, cubes, associahedra, permutahedra and operahedra were introduced in \cref{ss:hypergraph-polytopes}.

\begin{samepage}
  \begin{thm}
    \label{thm:examples}
    The following families of hypergraph polytopes are contextual:
    \begin{enumerate}[label=(\alph*)]
      \item simplices,
      \item cubes,
      \item associahedra,
      \item permutahedra,
      \item operahedra,
      \item contextual graph-associahedra,
      \item contextual nestohedra.
    \end{enumerate}
  \end{thm}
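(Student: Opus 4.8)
The plan is to check, for each of the seven families $\calH$, the three conditions of \cref{def:contextual-family}. Two reductions come for free. Condition (1), that every member is contextual, holds by definition for the families (f) of contextual graph-associahedra and (g) of contextual nestohedra. And for the contextuality demanded implicitly in condition (3) one may invoke the observation recorded after \cref{def:contextual-family}, that a reconnected restriction of a contextual hypergraph is contextual, together with the remark that \emph{every} $2$-dimensional hypergraph is contextual: it has exactly three vertices, so the only connected $Y$ with $|Y|\geq 3$ is the whole vertex set, and the defining condition is then vacuous. Thus three genuine tasks remain: (1$'$) contextuality of simplices, cubes, associahedra, permutahedra and operahedra; (2) closure of each family under passing to connected components of restrictions; and (3$'$) identification of $\recrestr{\hyper{H}}{X}$, for $|X|=3$, as a two-dimensional member of the family (it is automatically $2$-dimensional, having three vertices).

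For (1$'$) I would spell out the connected subsets and the ``separates'' relation in each family and check directly that restricting to a connected $Y$ leaves it unchanged. In $\mathbb{S}_n$ the connected subsets are the singletons and $H$; in $\mathbb{C}_n$ the singletons and the initial segments $\{1,\dots,i\}$; in $\mathbb{K}_n$ the intervals; in $\mathbb{P}_n$ all nonempty subsets. For each, removing a vertex $a$ and recording which of two other vertices land in the same component of $\hyper{H}\setminus\set{a}$ is manifestly insensitive to further restriction to a connected $Y$ containing them (a path keeps the cut-vertices separating a given pair when one restricts to an interval containing that pair; a complete graph never separates anything; and so on). For operahedra one uses that the line graph $\mathbb{L}(\calT)$ is a \emph{block graph}: in a connected induced subgraph $G_Y$ of a block graph $G$, the cut-vertices of $G$ that separate $u,v\in Y$ lie on \emph{every} $u$-$v$ path, hence lie in $Y$, and they remain the only separators of $u$ and $v$ in $G_Y$ because each block is a clique, so the edges joining consecutive cut-vertices along the $u$-$v$ block-path survive in $G_Y$. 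Hence all $\mathbb{L}(\calT)$ are contextual.

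For (2), the first five families have transparent restrictions: a restriction of a simplex is a disjoint union of points (again simplices); of a cube, one cube together with isolated points; of the path $\mathbb{K}_n$, a disjoint union of subpaths (associahedra); of the complete graph $\mathbb{P}_n$, a smaller complete graph (a permutahedron); and a restriction of a clawfree block graph is again a clawfree block graph, since ``clawfree'' and ``block graph'' are each characterized by forbidden induced subgraphs, hence hereditary, while clawfree block graphs are exactly the underlying graphs of operahedra (the line graphs of trees), so the components are again operahedra. For (f) and (g) I would show that contextuality passes to connected components of restrictions: a connected component $\hyper{K}$ of $\hyper{H}\setminus W$ is of the form $\hyper{H}_K$ for a connected $K\inc H$, and for connected $Y\inc K$ with $|Y|\geq 3$ and $X\inc Y$ with $|X|=3$, applying contextuality of $\hyper{H}$ to $Y$ and to $K$ yields $\recrestr{(\hyper{K}_Y)}{X}=\recrestr{(\hyper{H}_Y)}{X}=\recrestr{\hyper{H}}{X}=\recrestr{(\hyper{H}_K)}{X}=\recrestr{\hyper{K}}{X}$; restrictions of graphs being graphs, this also handles (f).

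Finally, for (3$'$), compute $\recrestr{\hyper{H}}{X}$ from $\Sat(\hyper{H})$: one obtains the $2$-simplex for $\mathbb{S}_n$, the $2$-cube for $\mathbb{C}_n$, the $2$-permutahedron for $\mathbb{P}_n$, and the $2$-associahedron for $\mathbb{K}_n$ — precisely the shapes (B1)--(B4) from the classification of $2$-faces in \cref{s:anatomy}. For operahedra and contextual graph-associahedra, $\recrestr{\hyper{H}}{X}$ is the reconnected complement of a connected graph; by that same classification it is one of those four hypergraphs, and shapes (B1) and (B2) are impossible for a connected graph: if $a$ separated $\set{b},\set{c}$ and $c$ separated $\set{a},\set{b}$, a simple $b$-$c$ path would run through $a$ and its $a$-to-$c$ segment would be an $a$-$c$ path avoiding $b$, a contradiction. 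So $\recrestr{\hyper{H}}{X}$ is a $2$-associahedron or a $2$-permutahedron, which are the only two-dimensional graph-associahedra and are both operahedra (line graphs of $P_4$ and of $K_{1,3}$). For contextual nestohedra, $\recrestr{\hyper{H}}{X}$ is simply some two-dimensional hypergraph, hence contextual and in the family. I expect the main obstacle to be the operahedron case — establishing that block graphs are contextual and that clawfree block graphs form a hereditary class coinciding with line graphs of trees — together with one bookkeeping point: reconnected restrictions are automatically saturated, whereas $\mathbb{K}_n$, $\mathbb{P}_n$ and the graphs underlying operahedra are not, so condition (3) must be read up to replacing a hypergraph by its saturation (equivalently, at the level of the associated polytopes, on which $\hyper{H}$ and $\Sat(\hyper{H})$ agree).
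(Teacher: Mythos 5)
Your proposal follows essentially the same route as the paper's proof: both check conditions (1)--(3) of \cref{def:contextual-family} family by family, handling simplices and cubes by direct inspection of the hyperedges, handling associahedra, permutahedra and operahedra via heredity of the classes of linear, complete and clawfree block graphs under connected induced subgraphs and under reconnected restriction, and declaring (f)--(g) essentially definitional. You supply more detail than the paper does, notably the block-graph argument that separation of two vertices by a third is unaffected by passing to a connected induced subgraph containing all three, the explicit reduction of the contextuality needed in condition (3) to the vacuous two-dimensional case, and the remark that condition (3) must be read up to saturation (the paper is silent on this, and your flag is apt); these fill in steps the paper's terse proof leaves implicit.

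One local slip: in your exclusion of shapes (B1)--(B2) for connected graphs, the contradiction is drawn from the wrong segment. If $a$ separates $\set{b},\set{c}$ and $c$ separates $\set{a},\set{b}$, take a simple $b$-$c$ path; it passes through $a$, and its initial segment from $b$ to $a$ is an $a$-$b$ path avoiding $c$ (since $c$ occurs only as the terminal vertex of the path), contradicting that $c$ separates $a$ from $b$. The $a$-to-$c$ segment avoiding $b$, which you invoke, contradicts nothing, because $b$ is not assumed to separate any pair. With that one-line repair, your argument, and the rest of the proposal, is correct.
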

\end{samepage}

\begin{proof}
  Let us proceed one family at a time.
  For each one, we check conditions (1)-(3) in \cref{def:contextual-family}.
  We consider sets of vertices to be $H=\{1,\ldots,n\}$.
  \begin{enumerate}[label=(\alph*)]
    \item Conditions (1)-(3) follow easily from the fact that hyperedges of simplices are all either singletons or the maximal hyperedge.
    \item We first prove condition (1). 
    Note $\hyper{C}_n$ is saturated, and that $(\hyper{C}_n)_{\set{1,\ldots,m}}=\hyper{C}_m$ if $m\leq n$.
    So we have to check that for all $m\leq n$ and all $i,j,k\leq m$, we have $\xyz{k}{{\hyper{C}_n}}{\set{i,j}}$ iff
    $\xyz{k}{{\hyper{C}_m}}{\set{i,j}}$, which follows immediately from the observation that for all $p\geq m$ we have
    $\xyz{k}{{\hyper{C}_p}}{\set{i,j}}$ iff $i<k$ and $j<k$.
    For conditions (2) and (3), it suffices to observe that the connected components of $\hyper{C}_n\setminus X$, for some $X$, are all cubes $\hyper{C}_m$ with $m<n$, and that reconnected restrictions of cubes are cubes.
    \item[(c)-(e)] Conditions (1) and (2) follow from the fact that any connected subgraph of a linear (resp. complete, clawfree block) graph is a linear (resp. complete, clawfree block) graph. 
    Condition (3) follows from the fact that any reconnected complement of a subset in a linear (resp. complete, clawfree block) graph is a linear (resp. complete, clawfree block) graph.
    \item[(f)-(g)] This is immediate from the definitions.
  \end{enumerate}
\end{proof}

\begin{rem}
  Note that contextual (hyper)graphs do not contain all graph-associahedra.
  For instance, we have seen in \cref{non-contextual-2} that the cyclohedra are not contextual. 
  It would be interesting to characterize combinatorially contextual (hyper)graphs.
\end{rem}


\subsection{Categorical coherence}
\label{ss:coherence}

In this final section, we recall the categorical coherence theorems associated with associahedra and operahedra, and conjecture one for permutahedra.


\subsubsection{Associahedra} \label{Huet-correspondence}
Recall that the scene is the data of a category $\mathbf C$, a bifunctor $\otimes:\mathbf{C}^2\rightarrow \mathbf C$ and a natural iso $\alpha$ from the functor
$(X,Y,Z)\mapsto (X\otimes Y)\otimes Z$ to the functor  $(X,Y,Z)\mapsto X \otimes (Y\otimes Z)$. 
Mac Lane's coherence theorem states that for any two functors $F,G$ from $\mathbf{C}^{n+1}$ to $\mathbf{C}$ arising from $n$ iterations of $\otimes$, any two  natural transformations $\lambda_1,\lambda_2$ from $F$ to $G$  ``written using $\alpha$ or its inverse'' are equal, provided the statement holds in the following special case, called  {\em coherence condition}: 
\begin{itemize}
\item $F\eqdef (X,Y,Z,U)\mapsto ((X\otimes Y)\otimes Z)\otimes U,$ 
\item $G\eqdef (X,Y,Z,U)\mapsto X\otimes (Y\otimes (Z\otimes U)),$ 
\item $\lambda_1\eqdef (X\otimes\alpha_{Y,Z,U})\circ\alpha_{X,Y\otimes Z,U} \circ (\alpha_{X,Y,Z}\otimes U),$ and
\item $\lambda_2\eqdef  \alpha_{X,Y,Z\otimes U}\circ \alpha_{X\otimes Y,Z,U},$ 
\end{itemize}
i.e. provided the following diagram (Mac Lane's pentagon) commutes:
\begin{center}
\vspace{-.5cm}
$$
 \xymatrix @-1.65pc {&& ((X\otimes Y)\otimes Z)\otimes U \ar @{->}[ddll]^{\alpha_{X,Y,Z}\otimes U} \ar @{->}[dddrr]^{\alpha_{X\otimes Y,Z,U}}&& \\
 &&&&\\
 (X\otimes (Y\otimes Z))\otimes U  \ar @{->}[dd]^{\alpha_{X,Y\otimes Z,U} }&&   && \\
 &&&&(X\otimes Y)\otimes (Z\otimes U) \ar @{->}[dddll]_{\alpha_{X,Y,Z\otimes U}}\\
 X\otimes ((Y\otimes Z)\otimes U) \ar @{->}[ddrr]_{X\otimes\alpha_{Y,Z,U}} &  &\\
 &&&&\\
 &&X\otimes (Y\otimes (Z\otimes U))&&}
$$
\end{center}

Via the Huet correspondence \cite{Huet-notes-cat}, the annotated proof of confluence of the rewriting system $(\Sigma_\calH, R_\calH)$ associated to the contextual family of associahedra $\calH$ provides a proof of Mac Lane's coherence theorem, with the pentagon in $\calH(2)$ acting as the coherence condition. 
The following examples explain the translation between the language of hypergraph polytopes and the language of monoidal categories. 

\begin{example}
Consider the linear tree
\vspace{-.7cm}
\begin{center}
$$\xymatrix @-1.65pc {{\cal T} & \eqdef  &X \ar @{-}[rr]^{1}&& Y \ar @{-}[rr]^{2}&& Z \ar @{-}[rr]^{3}&& U}.
  $$
\end{center}
Then $\hyper{L}({\cal T})$ is the associahedron $\hyper{K}^3$. 
The constructs of ${\cal T}$ decorate a pentagon as follows

\begin{center}
$$\xymatrix @-1.65pc {&& 3(2(1)) \ar @{->}[ddll]_{3(\set{1,2})} \ar @{->}[dddrr]^{\set{2,3}(1)}&& \\
 &&&&\\
3(1(2))   \ar @{->}[dd]^{\set{1,3}(2)}&&   && \\
 &&&&2(1,3) \ar @{->}[dddll]^{\set{1,2}(3)}\\
 1(3(2)) \ar @{->}[ddrr]^{1(\set{2,3})} &  &\\
 &&&&\\
 &&1(2(3))&&}$$
\end{center}
and are in bijective correspondence with the vertices and edges of Mac Lane's pentagon. 
The encoding is given as follows:
  \begin{itemize}
  \item $(X\otimes_1 Y)\otimes_2 (Z\otimes_3 U)$, where we annotated the ``compositions'' $\otimes$ with the vertices of $\hyper{K}^2$, can be written $\otimes_2(\otimes_1(X,Y),\otimes_3(Z,U))$ in prefix (or tree) notation. Then we get 
 $2(1,3)$ by removing the leaf nodes of that tree.
 \item $\alpha_{X,Y,Z}\otimes_3 U$ can be interpreted as $(X\otimes_1 Y\otimes_2 Z)\otimes_3 U$ (a non fully parenthesized expression), which likewise 
 translates as $3(\set{1,2})$,where $3(-)$ makes the job of contextualization.
 \item Likewise, we can move from
$\alpha_{X,Y\otimes_2 Z,U}$ to $X\otimes_1(Y\otimes_2 Z)\otimes_3 U$ to $\set{1,3}(2)$, where $2$ makes the job of instantiation.
\end{itemize}
\end{example}

\begin{example} 
  \label{Huet-MacLane}
Taking the 4-dimensional associahedron $\hyper{K}^{\set{0,1,2,3,4}}$, we get the following instance in context of $\hyper{K}^3=\recrestr{\hyper{K}^{\set{0,1,2,3,4}}}{\set{1,2,3}}$, i.e., of Mac Lane's coherence condition:
\begin{center}
$$\xymatrix @-1.65pc {&& 4(3(2(1(0)))) \ar @{->}[ddll]_{4(3(\set{1,2}(0)))} \ar @{->}[dddrr]^{4(\set{2,3}(1(0)))}&& \\
 &&&&\\
4(3(1(0,2)))  \ar @{->}[dd]^{4(\set{1,3}(0,2))}&&   && \\
 &&&&4(2(1(0),3)) \ar @{->}[dddll]^{4(\set{1,2}(0,3))}\\
 4(1(0,3(2))) \ar @{->}[ddrr]^{4(1(0,\set{2,3}))} &  &\\
 &&&&\\
 &&4(1(0,2(3)))&&}$$
\end{center}
We recover the (encoding of the) edge 
 $$
 \xymatrix @-2pc {&& ((((X_1\otimes_0 X_2)\otimes_1 Y)\otimes_2 Z)\otimes_3 U)\otimes_4 V \ar @{->}[dddddll]_(.6){(\alpha_{(X_1\otimes X_2),Y,Z}\otimes U)\otimes V\quad} && \\
 &&&&\\
 &&&&\\
  &&&&\\
    &&&&\\
( ((X_1\otimes_0 X_2)\otimes_1 (Y\otimes_2 Z))\otimes_3 U)\otimes_4 V  &&   && \\
\ }
$$
as the top left edge above.
\end{example}

Here, the fact that the family of associahedra is contextual implies in particular that the local confluence diagram associated to the expression
$$((X_1\otimes_0 X_2)\otimes_1 Y\otimes_2 Z\otimes_3 U)\otimes_4 V $$
which takes place on the $4$-dimensional associahedron, has the same shape as the critical confluence diagram associated to the expression
$$ (X \otimes_1 Y \otimes_2 Z \otimes_3 U)$$
and thus that the former one can be seen as an instance in context of MacLane's pentagon.  
Rigorously, the above data define a term rewriting system, that we shall call the Huet--Mac Lane rewriting system, on a signature consisting of a single operation $\otimes$ and on linear terms written with six variables  $X_1,X_2,Y,Z,U,V$. 
This rewriting system is in exact correspondence with our rewriting system for  $\hyper{K}^{\set{0,1,2,3,4}}$. 
More precisely, we can go from constructs to  Huet--Mac Lane terms by applying the following recipe. 
Consider the linear tree 
\vspace{-.7cm}
\begin{center}
$$\xymatrix @-1.65pc {{\cal T}' & \eqdef  &X_1 \ar @{-}[rr]^{0}&& X_2 \ar @{-}[rr]^{1}&& Y \ar @{-}[rr]^{2}&& Z \ar @{-}[rr]^{3}&& U \ar @{-}[rr]^{4}&& V}.
  $$
  \end{center}
Then, building, say the construct $4(3(1(0,2)))$ ``on this tree'' rather than on its associated  line graph $\mathbb{L}({\cal T}')=\hyper{K}^5$, we can see that picking 4 amounts to cutting $\cal T'$ by removing edge 4. This leaves  the one-node tree $V$ alone on the right and the term associated to $3(1(0,2))$ on the left, thus determining $-\otimes V$. And so on, until reaching $( ((X_1\otimes X_2)\otimes  (Y\otimes Z))\otimes U)\otimes V $. 
Note that the information ``leaving $V$ alone" is lost on the associated line graph. 

We note that linear Huet--Mac Lane terms, as well as the  partially parenthesized expressions such as those written above, can also be described as all possible nestings on, say~${\cal T'}$.

As for the ``easy'' local confluence diagrams of type (a1) and (a2), let us point out that they correspond in categorical terms to the bifunctoriality and naturality conditions, respectively, as exemplified below:
\begin{center}
\begin{tikzpicture}
\node (a) at (0,1) {};
\node (b) at (-2,0) {};
\node (c) at (2,0) {};
\draw[->] (a)--(b);
\draw[->] (a)--(c);
\node at (0,1.3) {$((X\otimes Y)\otimes Z)\otimes((U\otimes V)\otimes W) $};
\node at (-3,-0.3) {$(X\otimes (Y\otimes Z))\otimes((U\otimes V)\otimes W) $};
\node at (3,-0.3) {$ ((X\otimes Y)\otimes Z)\otimes(U\otimes (V\otimes W))$};
\end{tikzpicture}
\end{center}

\medskip

\begin{center}
  \begin{tikzpicture}
  \node (a) at (0,1) {};
  \node (b) at (-2,0) {};
  \node (c) at (2,0) {};
  \draw[->] (a)--(b);
  \draw[->] (a)--(c);
  \node at (0,1.3) {$(((X\otimes Y)\otimes Z)\otimes U)\otimes V $};
  \node at (-3,-0.3) {$ ((X\otimes (Y\otimes Z))\otimes U)\otimes V$};
  \node at (3,-0.3) {$ ((X\otimes Y)\otimes Z)\otimes (U\otimes V)$};
  \end{tikzpicture}
\end{center}

As we have seen in \cref{non-contextual-2}, this interpretation would not hold anymore if one were to consider the cycle graph instead of the line graph, that is, if one were to identify $X$ and $V$ in the expressions above. 

\medskip
All in all, we can summarize the orginal Huet correspondence and its factorisation through associahedra as follows:
\begin{itemize}
\item[(0)] iterated tensor functors are viewed as linear terms,
\item[(1)] associators between such functors are viewed as rewriting steps,
\item[(2)] Mac Lane's pentagons are viewed as local confluence diagrams for overlapping pairs,
\end{itemize}
corresponding to 0-faces, 1-faces and pentagonal 2-faces of associahedra, respectively.


\subsubsection{Operahedra}
In the case of operahedra, it turns out that the shape of a $\set{x_1,x_2,x_3}$-face is entirely determined by the relative positions of the edges $x_1,x_2,x_3$ in the underlying  planar tree.  Let us make this more precise.
Let $S$ be an $X$-face of $\mathbb{L}({\cal T})$, for some  planar tree $\cal T$. Then its shape is given by (the linear graph associated to)  the tree ${\cal T}_{\setminus X}$ obtained from  ${\cal T}$  by contracting all edges except the three elements of $X$ (which are edges of ${\cal T}$).  We leave the details to the reader, but note that
this illustrates contextuality: this shape is only determined by ${\cal T}$ and $X$, and does not depend on the position of $X$ in $S$.

Let us also say a word on the actual coherence statement for categorified operads: it relies on a signature and rewriting rules much in the spirit of the Huet--Mac Lane rewriting system (see~\cite{DP15,laplante-anfossiDiagonalOperahedra2022a,CLA1}), that correspond to constructs of operahedra, to their equivalent representations as nestings of planar trees (the linear trees of associahedra being a particular case), and to our rewriting systems on the linear graphs of planar trees. 


\subsubsection{Permutahedra and friends}
It seems likely that the family of permutahedra admits a similar categorical coherence theorem. 
The corresponding algebraic structure would be here that of \emph{permutads} \cite{LodayRonco11,Markl19}. 
In the same fashion as associahedra are operahedra associated to linear trees, permutahedra are operahedra associated to $2$-leveled trees \cite{laplante-anfossiDiagonalOperahedra2022a}.
Therefore, one could define categorified permutads by adapting the definition of categorified operads \cite{CLA1} to these trees.

In the case of contextual graph-associahedra, it seems likely that a corresponding coherence theorem could be associated to a certain type of categorified reconnectads \cite{DotsenkoKeilthyLyskov}.
The situation is summarized in \cref{table:contextual-hyper}.

\begin{table}[h!]
	\begin{center}
	\begin{tabular}{c|c|c}
	Family & Algebraic structure & Coherence theorem \\
	\hline
	Simplices & - & - \\
	Cubes & - & - \\
	Associahedra & Monoidal category & \cite{MacLane63} \\
	Permutahedra & Categorified permutads & - \\
	Operahedra & Categorified operads & \cite{DP15,CLA1} \\
	Contextual graph-associahedra & Categorified reconnectads & - \\
	Contextual nestohedra & - & - 
	\end{tabular}
	\end{center}
  \caption{Families of contextual hypergraphs, the categorical structures that they (conjecturally) encode, and their associated coherence theorems.}
  \label{table:contextual-hyper}
\end{table}

\bigskip 

\bibliographystyle{amsalpha}

\bibliography{Coherence2}

\end{document}